\let\nc\newcommand
\theoremstyle{plain}
\newtheorem*{thm}{Theorem}
\newtheorem*{prop}{Proposition}
\newtheorem*{cor}{Corollary}
\newtheorem*{lem}{Lemma}
\theoremstyle{definition}
\newtheorem*{defn}{Definition}
\newtheorem*{example}{Example}
\newtheorem*{remark}{Remark}
\newtheorem*{Main}{Theorem}
\newtheorem*{MainCor}{Corollary}
\nc{\bdm}{\begin{displaymath}}
\nc{\edm}{\end{displaymath}}
\nc{\bthm}{\begin{thm}}
\nc{\ethm}{\end{thm}}
\nc{\blem}{\begin{lem}}
\nc{\elem}{\end{lem}}
\nc{\bcor}{\begin{cor}}
\nc{\ecor}{\end{cor}}
\nc{\beq}{\begin{equation}}
\nc{\eeq}{\end{equation}}
\nc{\bprop}{\begin{prop}}
\nc{\eprop}{\end{prop}}
\nc{\bdefn}{\begin{defn}}
\nc{\edefn}{\end{defn}}
\nc{\Z}{\mathbb{Z}}
\newcommand{\N}{\mathbb{N}}
\newcommand{\C}{\mathbb{C}}
\newcommand{\h}{\mathfrak{h}}
\renewcommand{\Im}{\mbox{\textrm{Im}}\,}
\renewcommand{\ker}{\textrm{Ker}}
\newcommand{\Ker}{\mbox{\textrm{Ker}}\,}
\newcommand{\Fun}{\mbox{\textrm{Fun}}\,}
\newcommand{\Kldim}{\mbox{\textrm{Kl.dim}}\,}
\newcommand{\height}{\mbox{\textrm{ht}}\,}
\nc{\Hom}{\textrm{Hom}}
\nc{\rank}{\textrm{rank} \,}
\nc{\ds}{\dots}
\let\mc\mathcal
\let\mf\mathfrak
\nc{\HW}{\bar{H}_{\mathbf{c}}(W)}
\nc{\HK}{\bar{H}_{\mathbf{c}}(K)}
\nc{\HtK}{\widetilde{H}_{\mathbf{c}}(K)}
\nc{\CMW}{\textsf{CM}_{\mbf{c}}(W)}
\nc{\CMK}{\textsf{CM}_{\mbf{c}}(K)}
\nc{\mbf}{\mathbf}
\nc{\LK}{\textsf{Irr}(K)}
\nc{\LW}{\textsf{Irr}(W)}
\nc{\Res}{\mathsf{Res} \, }
\nc{\Ind}{\mathsf{Ind} \, }
\nc{\cont}{\textrm{cont}}
\renewcommand{\mod}{\textrm{mod}}
\nc{\eWb}{\mathbf{e}_{W_b}}
\nc{\eW}{\mathbf{e}_{W}}
\nc{\msf}{\mathsf}
\nc{\Ui}{\mc{U}_{i,+}}
\nc{\Uone}{\mc{U}_{1,+}}
\nc{\Utwo}{\mc{U}_{2,+}}
\newcommand{\mmod}{\text{-}\mathrm{mod}}
\nc{\minusone}{-1}
\nc{\minustwo}{-2}
\nc{\p}{\partial}
\begin{document}

\title{Cuspidal representations of rational Cherednik algebras at $t = 0$}

\author{Gwyn Bellamy}

\address{School of Mathematics and Maxwell Institute for Mathematical Sciences, University of Edinburgh, James Clerk Maxwell Building, Kings Buildings, Mayfield Road, Edinburgh EH9 3JZ, Scotland}
\email{G.E.Bellamy@sms.ed.ac.uk}

\begin{abstract}
\noindent We study those finite dimensional quotients of the rational Cherednik algebra at $t = 0$ that are supported at a point of the centre. It is shown that each such quotient is Morita equivalent to a certain ``cuspidal'' quotient of a rational Cherednik algebra associated to a parabolic subgroup of $W$.
\end{abstract}

\maketitle

\section{Introduction}

\subsection{} Let $W$ be a finite complex reflection group. Associated to $W$ is a family of noncommutative algebras, the rational Cherednik algebras. These algebras $H_{t,\mbf{c}}(W)$ depend on a pair of parameters, $t$ and $\mbf{c}$ (precise definitions are given in (\ref{subsection:defns})). At $t = 0$ the algebras are finite modules over their centres. The aim of this paper is to continue the study of finite dimensional quotients of the rational Cherednik algebra at $t = 0$. Using certain completions of the centre of the rational Cherednik algebra we are able to relate the symplectic leaves of the corresponding generalized Calogero-Moser space $X_{\mbf{c}}(W)$ to zero dimensional leaves in the generalized Calogero-Moser space of a parabolic subgroup of $W$. As a consequence of this we are able to relate the finite dimensional quotients supported on a point of a given leaf to finite dimensional algebras supported on a zero dimensional leaf associated to the parabolic subgroup of $W$. To be precise, let $\mathcal{L}$ be a symplectic leaf in $X_{\mathbf{c}}(W)$ of dimension $2l$ and $\chi$ a point on $\mathcal{L}$. If $\mf{m}_{\chi}$ is the maximal ideal of the centre of the rational Cherednik algebra defining $\chi$ then set $H_{\mbf{c},\chi} := H_{0,\mbf{c}}(W) / \mf{m}_{\chi} \cdot H_{0,\mbf{c}}(W)$, a finite dimensional algebra. Our main results says:

\begin{Main}
There exists a parabolic subgroup $W_b$, $b \in \h$, of $W$ of rank $\dim \h - l$ and cuspidal algebra $H_{\mathbf{c}',\psi}$ with $\psi \in X_{\mathbf{c}'}(W_b)$ such that
\begin{displaymath}
H_{\mathbf{c},\chi} \simeq \textrm{Mat}_{\, |W / W_b|}\,(H_{\mathbf{c}',\psi}).
\end{displaymath}
\end{Main}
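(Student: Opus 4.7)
The strategy is to apply a $t=0$ version of the Bezrukavnikov--Etingof completion isomorphism at the closed point $\chi$ of $X_{\mbf{c}}(W)$, and then to reduce modulo $\mf{m}_\chi$. First I would identify the parabolic. The inclusion $\C[\h]^W \hookrightarrow Z_{\mbf{c}}(W) = \mc{O}(X_{\mbf{c}}(W))$ gives a finite map $X_{\mbf{c}}(W) \to \h/W$. Let $\bar{b}$ denote the image of $\chi$, fix a lift $b \in \h$, and set $W_b = \mathrm{Stab}_W(b)$; this should be the parabolic subgroup appearing in the statement. Write $\mbf{c}'$ for the restriction of $\mbf{c}$ to the reflections of $W$ lying in $W_b$.

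Next, completing along $\mf{m}_{\bar{b}} \C[\h]^W$, the Bezrukavnikov--Etingof centralizer-algebra isomorphism ought to yield
\[
H_{0,\mbf{c}}(W)^{\wedge \bar{b}} \;\simeq\; \mathrm{Mat}_{|W/W_b|}\!\bigl(H_{0,\mbf{c}'}(W_b)^{\wedge b}\bigr),
\]
exploiting that the centralizer algebra $Z(W,W_b)$ tensored over $\C W_b$ with an algebra containing $\C W_b$ turns into a matrix algebra of size $|W/W_b|$. Restricting to centres identifies the relevant completion of $Z_{\mbf{c}}(W)$ with the corresponding completion of $Z_{\mbf{c}'}(W_b)$. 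Under this identification the maximal ideal $\mf{m}_\chi$ on the left corresponds to a unique maximal ideal $\mf{m}_\psi$ on the right, cutting out a point $\psi \in X_{\mbf{c}'}(W_b)$; quotienting both sides by this ideal would then produce the claimed $H_{\mbf{c},\chi} \simeq \mathrm{Mat}_{|W/W_b|}(H_{\mbf{c}',\psi})$.

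The geometric assertions---the rank formula and the cuspidality of $\psi$---would follow from the Poisson nature of the above isomorphism. The formal neighbourhood of $\chi$ in $X_{\mbf{c}}(W)$ should decompose as a symplectic product of a factor of dimension $2\dim \h^{W_b}$ arising from translation along $\h^{W_b}\times(\h^{W_b})^*$ with the formal neighbourhood of $\psi$ in $X_{\mbf{c}'}(W_b)$. Matching the $2l$-dimensional leaf through $\chi$ against this decomposition forces $\dim \h^{W_b} = l$, hence $\rank(W_b) = \dim \h - l$; the same factorization forces the leaf through $\psi$ to be zero-dimensional, which is exactly the cuspidality of the algebra $H_{\mbf{c}',\psi}$.

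The principal obstacle I anticipate is precisely the upgrade of the Bezrukavnikov--Etingof isomorphism---originally formulated for completions along $\h/W$---to a completion at an arbitrary closed point $\chi$ of $X_{\mbf{c}}(W)$, together with the clean extraction of the smooth symplectic factor $\h^{W_b}\times(\h^{W_b})^*$ from a ``residual'' direction inside $X_{\mbf{c}'}(W_b)$. Once this Poisson factorization at the level of centres is established, both the matrix description and the cuspidality of $\psi$ reduce to bookkeeping on maximal ideals.
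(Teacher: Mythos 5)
Your overall architecture---complete along a point $b$ of $\h/W$, apply the Bezrukavnikov--Etingof centralizer isomorphism, match centres, quotient by the maximal ideal, and read off cuspidality from the factorization $Z_{\mathbf{c}'}(W_b,\mathfrak{h}) \simeq Z_{\mathbf{c}'}(W_b,\mathfrak{t}) \otimes \C[\mf{s}\times\mf{s}^*]$---is the same as the paper's. But there is a genuine gap at the very first step: you take $b$ to be (a lift of) the image of the \emph{given} point $\chi$ under $X_{\mbf{c}}(W)\to\h/W$ and set $W_b=\mathrm{Stab}_W(b)$. For a point $\chi$ lying over a non-generic point of $\pi(\mc{L})$ this produces the wrong parabolic. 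Concretely, for $W=S_2$ and $\mbf{c}=0$ the two-dimensional leaf in $\h\times\h^*/W$ contains points of the form $(0,y)$, $y\neq 0$, whose image in $\h/W$ is $0$; then $W_b=W$ has rank $1$, not $\dim\h-l=0$, and the resulting $\psi$ sits on a positive-dimensional leaf of $X_{\mbf{c}'}(W_b)$, so it is not cuspidal. Your closing claim that ``matching the $2l$-dimensional leaf against the decomposition forces $\dim\h^{W_b}=l$'' is exactly what fails here: the symplectic factor $\h^{W_b}\times(\h^{W_b})^*$ only accounts for $2\dim\h^{W_b}\le 2l$ of the leaf, and the remainder lives inside $X_{\mbf{c}'}(W_b,\mf{t})$.

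The paper closes this gap with two ingredients you are missing. First, Proposition \ref{prop:mainidea} (proved via height computations for Poisson primitive ideals in the completed centre) shows there is a \emph{unique} conjugacy class $(W_b)$ of rank $n-l$ whose regular stratum $\h^{(W_b)}_{\textrm{reg}}/W$ meets $\pi(\mc{L})$, and that this intersection is open dense in $\pi(\mc{L})$; this is what pins down the correct parabolic and simultaneously yields the rank formula and the zero-dimensionality of the leaf through $\psi$. Second, the Brown--Gordon isomorphism (\ref{eq:BGiso}), $H_{\mbf{c},\chi_1}\simeq H_{\mbf{c},\chi_2}$ for $\chi_1,\chi_2$ on the same leaf, is used to \emph{move} the given $\chi$ along $\mc{L}$ to a point lying over such a generic $b$ before completing. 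Your proposal has no mechanism for relocating $\chi$, so even after fixing the choice of $(W_b)$ you could not conclude anything about the fibre at the original point. (By contrast, the obstacle you do flag---upgrading Bezrukavnikov--Etingof to a completion at $\chi$ itself---is not how the paper proceeds and is not needed: one completes along $b\in\h/W$ exactly as in \cite{BE}, identifies the completed centres as Poisson algebras, and only then quotients by the image of $\mf{m}_\chi$.)
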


\noindent Here cuspidal means that the point $\psi$ defines a zero dimensional leaf $\{ \psi \}$ in $X_{\mathbf{c}'}(W_b)$. A consequence of this result is that

\begin{MainCor}
There exists a functor 
\bdm
\Phi_{\psi, \chi} \, : \, H_{\mathbf{c}',\psi} \mmod \stackrel{\sim}{\longrightarrow} H_{\mbf{c},\chi} \mmod
\edm
defining an equivalence of categories such that
\begin{displaymath}
\Phi_{\psi,\chi}(M) \simeq \textsf{Ind}_{\, W_b}^{\, W} \,  M \quad \forall \, M \in H_{\mathbf{c}',\psi} \mmod
\end{displaymath}
as $W$-modules.
\end{MainCor}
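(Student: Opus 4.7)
The plan is to deduce the corollary from the Main Theorem by making the asserted Morita equivalence concrete and then tracking the $W$-module structure through it. Any Morita equivalence of the form $H_{\mathbf{c},\chi} \simeq \textrm{Mat}_{|W/W_b|}(H_{\mathbf{c}',\psi})$ is implemented by a full idempotent: there exists $e \in H_{\mathbf{c},\chi}$ with $e H_{\mathbf{c},\chi} e \simeq H_{\mathbf{c}',\psi}$ and $H_{\mathbf{c},\chi} e H_{\mathbf{c},\chi} = H_{\mathbf{c},\chi}$. Setting
\bdm
\Phi_{\psi,\chi}(M) := H_{\mathbf{c},\chi} e \otimes_{H_{\mathbf{c}',\psi}} M,
\edm
standard Morita theory immediately gives the desired equivalence of categories, so the content of the corollary lies in determining the left $W$-module underlying $\Phi_{\psi,\chi}(M)$.

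To do so, I would extract the idempotent $e$ from the proof of the Main Theorem. That proof presumably proceeds by means of a Bezrukavnikov--Etingof type isomorphism identifying the completion of $H_{0,\mathbf{c}}(W)$ at $\chi$ with a matrix algebra over the completion of $H_{0,\mathbf{c}'}(W_b)$ at $\psi$; the natural choice of $e$ then corresponds, after specialising at the central characters, to a primitive idempotent in $\C W$ arising from the coset decomposition $\C W = \bigoplus_{w \in W/W_b} w \cdot \C W_b$. Granting this, one obtains an isomorphism of $(\C W, H_{\mathbf{c}',\psi})$-bimodules
\bdm
H_{\mathbf{c},\chi} e \; \simeq \; \C W \otimes_{\C W_b} H_{\mathbf{c}',\psi},
\edm
and tensoring over $H_{\mathbf{c}',\psi}$ with $M$ yields $\Phi_{\psi,\chi}(M) \simeq \C W \otimes_{\C W_b} M = \Ind_{W_b}^{W} M$ as $W$-modules, which is precisely the claim.

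The main obstacle is exactly the bimodule identification above: abstract Morita theory gives no control over how the group-algebra subrings $\C W \subset H_{\mathbf{c},\chi}$ and $\C W_b \subset H_{\mathbf{c}',\psi}$ interact under the equivalence. One therefore has to argue directly, from the explicit form of $e$ produced in the proof of the Main Theorem, that $\C W \cdot e \subset H_{\mathbf{c},\chi} e$ is free of rank one as a right $\C W_b$-module and generates $H_{\mathbf{c},\chi} e$ over $H_{\mathbf{c}',\psi}$. A dimension count then upgrades this to the full bimodule isomorphism displayed above. The only subtle point in this step is to check that the right $\C W_b$-action inherited from the inclusion $\C W_b \subset e H_{\mathbf{c},\chi} e \simeq H_{\mathbf{c}',\psi}$ matches the right regular action on the induction bimodule; this should follow from the $W$-equivariance built into the Bezrukavnikov--Etingof isomorphism used in the Main Theorem.
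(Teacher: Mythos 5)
Your overall strategy --- realize the Morita equivalence by an explicit bimodule and read off the $W$-structure from the Bezrukavnikov--Etingof isomorphism --- is in the right spirit, but there are two genuine gaps. First, the step you yourself flag as ``the main obstacle'' is precisely the content that has to be supplied, and your proposed route to it starts from an inaccurate description of the idempotent: the idempotent cutting $H_{\mathbf{c}',\psi}$ out of $\textrm{Mat}_{\,|W/W_b|}(H_{\mathbf{c}',\psi})$ does \emph{not} arise from a coset idempotent in $\C W$. It is the projection onto functions supported on the trivial coset inside the centralizer algebra $C(W,W_b,-)$ acting on $P=\Fun_{W_b}(W,-)$, and under $\theta^{-1}$ it lives in a completion of $\C[\h]$, not in the group algebra; so the claim that $\C W\cdot e$ is free of rank one over $\C W_b$ is not the right starting point. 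The paper avoids idempotents altogether: the equivalence is $M \mapsto \theta^*(M) = \Fun_{W_b}(W,M)$, and the explicit formula $(\theta(u)f)(w)=f(wu)$ in the Bezrukavnikov--Etingof isomorphism of \S\ref{thm:BEiso} shows immediately that $\theta^*(M)$ is the coinduced module, hence isomorphic to $\Ind_{W_b}^{W} M$ as a $W$-module since $W_b$ has finite index.

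Second, and more seriously, you omit the reduction to the fibre over $b$. The isomorphism $\theta$ is only available for points $\chi'$ lying in $\pi_1^{-1}(b)$ with $b \in \mathfrak{h}^{(W_b)}_{\textrm{reg}}/W$; for an arbitrary $\chi \in \mc{L}$ one must first transport along the leaf using the Brown--Gordon isomorphism (\ref{eq:BGiso}), and the corollary then requires that this transport be $W$-equivariant. That is not formal: it is exactly the content of \S\ref{lem:equivariantflow}, proved by observing that $D_f(w)=0$ for $w \in W$ and checking that flat sections are carried to themselves by the Hamiltonian flows defining $\psi_{\chi',\chi}$. Without this ingredient your argument only establishes the statement for $\chi$ lying over $b$, not for a general point of the leaf.
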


Since there are only finitely many zero dimensional leaves in $X_{\mbf{c}}(W)$ the above result shows that to describe the $W$-module structure of all the simple modules for a particular rational Cherednik algebra one only needs to describe the $W_b$-module structure of the cuspidal simple modules for each parabolic subgroup $W_b$ of $W$.

\section{The rational Cherednik algebra at $t = 0$}

\subsection{Definitions and notation}\label{subsection:defns}

Let $W$ be a complex reflection group, $\mathfrak{h}$ its reflection representation over $\C$ with rank $\mathfrak{h} = n$, and $\mathcal{S}(W)$ the set of all complex reflections in $W$. Let $( \cdot, \cdot ) : \mathfrak{h} \times \mathfrak{h}^* \rightarrow \C$ be the natural pairing defined by $(y,x) = x(y)$. For $s \in \mathcal{S}(W)$, fix $\alpha_s \in \mathfrak{h}^*$ to be a basis of the one dimensional space $\Im (s - 1)|_{\mathfrak{h}^*}$ and $\alpha_s^{\vee} \in \mathfrak{h}$ a basis of the one dimensional space $\Im (s - 1)|_{\mathfrak{h}}$, normalised so that $\alpha_s(\alpha_s^\vee) = 2$. Choose $\mathbf{c} : \mathcal{S}(W) \rightarrow \C$ to be a $W$-equivariant function and $t$ a complex number. The \textit{rational Cherednik algebra}, $H_{t,\mathbf{c}}(W)$, as introduced by Etingof and Ginzburg \cite[page 250]{1}, is the quotient of the skew group algebra of the tensor algebra, $T(\mf{h} \oplus \mf{h}^*) \rtimes W$, by the ideal generated by the relations

\begin{equation}\label{eq:rel}
[x_1,x_2] = 0, \qquad [y_1,y_2] = 0, \qquad [x_1,y_1] = t (y_1,x_1) - \sum_{s \in \mathcal{S}} \mathbf{c}(s) (y_1,\alpha_s)(\alpha_s^\vee,x_1) s, 
\end{equation}
\noindent for all $x_1,x_2 \in \mathfrak{h}^* \textrm{ and } y_1,y_2 \in \mathfrak{h}$.\\

\noindent For any $\nu \in \C \backslash \{ 0 \}$, the algebras $H_{\nu t,\nu \mathbf{c}}(W)$ and $H_{t,\mathbf{c}}(W)$ are isomorphic. In this article we will only consider the case $t = 0$, therefore we are free to rescale $\mbf{c}$ by $\nu$ whenever this is convenient.\\

\noindent A fundamental result for rational Cherednik algebras, proved by Etingof and Ginzburg \cite[Theorem 1.3]{1}, is that the PBW property holds for all $t, \mbf{c}$. That is, there is a vector space isomorphism 
\beq\label{eq:PBW}
H_{t, \mbf{c}}(W) \stackrel{\sim}{\rightarrow} \C [\h] \otimes \C W \otimes \C [\h^*].
\eeq

\subsection{The generalized Calogero-Moser Space}\label{sub:calogerospace}
The centre $Z_{\mathbf{c}}(W)$ of $H_{0,\mathbf{c}}(W,\mathfrak{h})$ is an affine domain. We shall denote by $X_{\mathbf{c}}(W) := \textrm{Spec}\, (Z_{\mathbf{c}}(W))$ the corresponding affine variety. The space $X_{\mathbf{c}}(W,\mathfrak{h})$ is called the \textit{generalized Calogero-Moser space} associated to the complex reflection group $W$ at parameter $\mathbf{c}$. The inclusions $\C[\mathfrak{h}]^W \hookrightarrow Z_{\mathbf{c}}(W)$ and $\C[\mathfrak{h}^*]^W \hookrightarrow Z_{\mathbf{c}}(W)$ define surjective morphisms
\bdm
\pi_1 \, : \, X_{\mathbf{c}}(W) \twoheadrightarrow \h^*/W \qquad \textrm{ and } \qquad \pi_2 \, : \, X_{\mathbf{c}}(W) \twoheadrightarrow \h/W.
\edm
We write 
\bdm
\Upsilon \,  : \,  X_{\mathbf{c}}(W,\mathfrak{h}) \twoheadrightarrow \h^* / W \times \h/W
\edm
for the product morphism $\Upsilon = \pi_1 \times \pi_2$. It is a finite, and hence closed, surjective morphism.

\subsection{Parabolic subgroups}\label{sec:para}
Let $W'$ be a subgroup of $W$. It is called a \textit{parabolic subgroup} if there is a set $S \subseteq \mathfrak{h}$ such that $W' = \textrm{Stab}_{\, W} (S)$. Since $W$ acts linearly on $\mathfrak{h}$ every parabolic subgroup is the stablizer of some linear subspace of $\mathfrak{h}$. By a theorem of Steinberg \cite[Theorem 1.5]{8}, a parabolic subgroup is itself a complex reflection group. Note that, in general, there exist subgroups of $W$ that are themselves complex reflection groups but are not parabolic subgroups e.g. $\Z / 2 \Z \subset \Z / 4 \Z$. The result \cite[Proposition 1.10]{Humphreys} shows that this behaviour does not happen in Weyl groups. We write
\bdm
(\mathfrak{h}^{*W'})^\perp := \{ y \in \mathfrak{h} \, | \, x(y) = 0 \, \textrm{ for all } \, x \in \mathfrak{h}^{*W'} \}.
\edm 
Then $\mathfrak{h} = \mathfrak{h}^{W'} \oplus (\mathfrak{h}^{*W'})^\perp$ is a decomposition of $\mathfrak{h}$ as a $W'$-module. Define the rank of a complex reflection group $W'$ to be the dimension of a faithful reflection representation of $W'$ of minimal rank. Note that $(\mathfrak{h}^{*W'})^\perp$ is a faithful reflection representation of $W'$ of minimal rank hence the rank of $W'$ is $\dim (\mathfrak{h}^{*W'})^\perp$. When $W$ is a real reflection group this definition of rank agrees, by \cite[Theorem 1.12]{Humphreys}, with the alternative definition of rank in terms of root systems (\cite[1.3]{Humphreys}). The group $W$ acts on its set of parabolic subgroups by conjugation. Given a parabolic subgroup $W'$ the corresponding conjugacy class will be denoted $(W')$. We also require the partial ordering on conjugacy classes of parabolic subgroups of $W$ defined by $(W_1) \ge (W_2)$ if and only if $W_1$ is conjugate to a subgroup of $W_2$ (the ordering is choosen in this way so that it agrees with a geometric ordering to be introduced in Section \ref{sec:Poisson}). Finally, for a given parabolic subgroup $W'$ of $W$, we denote by $\mathfrak{h}^{W'}_{\textrm{reg}}$ the subset of $\h^{W'}$ consisting of those points whose stabliser is $W'$: it is a locally closed subset of $\h$.

\section{Complete Poisson algebras}\label{sec:Poisson}

\subsection{} In this section we state and prove certain results on completed Poisson algebras that are required but that the author was unable to find suitible references for.

\subsection{Poisson Ideals}\label{lem:dim} Throughout $R$ will denote a commutative, affine domain over a field $k$. If $I$ is a proper ideal of $R$ then Krull's Intersection Theorem (\cite[Corollary 5.4]{Eisenbud}) says that
\bdm
\bigcap_{n = 1}^\infty I^n = 0.
\edm
Therefore, if $\widehat{R}_I$ denotes the completion of $R$ along $I$, the natural map $j : R \rightarrow \widehat{R}_I$ is an inclusion. The Krull dimension of $R$ will be written $\Kldim R$.

\begin{lem}
For $R$, $I$ as above,
\bdm
\Kldim R = \Kldim \widehat{R}_I
\edm
\end{lem}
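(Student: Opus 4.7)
The strategy is to reduce the claim to the dimension formula for flat local ring homomorphisms, with the domain hypothesis on $R$ entering through the equidimensionality of affine $k$-domains. Since $R$ is Noetherian, $\widehat{R}_I$ is also Noetherian and flat over $R$, and $I\widehat{R}_I$ lies inside the Jacobson radical of $\widehat{R}_I$ (because $1+x$ is invertible via its geometric series whenever $x \in I\widehat{R}_I$). Combined with the standard isomorphism $\widehat{R}_I/I\widehat{R}_I \cong R/I$, this identifies the maximal ideals of $\widehat{R}_I$ with those maximal ideals $\mf{m}$ of $R$ that contain $I$; write $\widehat{\mf{m}}$ for the one corresponding to $\mf{m}$. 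In particular $\Kldim \widehat{R}_I = \sup_{\mf{m} \supseteq I} \text{ht}(\widehat{\mf{m}})$.

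The main step is then a fibre computation. The composite $R \to \widehat{R}_I \to (\widehat{R}_I)_{\widehat{\mf{m}}}$ inverts every element of $R \setminus \mf{m}$ and therefore factors through $R_{\mf{m}}$ as a flat local homomorphism $R_{\mf{m}} \to (\widehat{R}_I)_{\widehat{\mf{m}}}$. Its closed fibre equals $(\widehat{R}_I / \mf{m}\widehat{R}_I)_{\widehat{\mf{m}}}$, which I claim is just $R/\mf{m}$: the isomorphism $\widehat{R}_I/I\widehat{R}_I \cong R/I$ together with the containment $I \subseteq \mf{m}$ yields $\widehat{R}_I / \mf{m}\widehat{R}_I \cong (R/I)/(\mf{m}/I) = R/\mf{m}$, already a field. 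The standard dimension formula for flat local homomorphisms then gives $\Kldim (\widehat{R}_I)_{\widehat{\mf{m}}} = \Kldim R_{\mf{m}}$.

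Finally, because $R$ is an affine domain over $k$ and $I$ is proper, Noether normalization combined with catenarity force every maximal ideal of $R$ to have height equal to $\Kldim R$, and at least one such maximal ideal contains $I$. Assembling the pieces yields $\Kldim \widehat{R}_I = \Kldim R$. The only place I expect to cost a little care is the fibre identification $\widehat{R}_I/\mf{m}\widehat{R}_I \cong R/\mf{m}$, which depends crucially on $I \subseteq \mf{m}$; the rest is a straightforward combination of classical commutative algebra inputs.
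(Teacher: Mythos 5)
Your proof is correct, and its skeleton coincides with the paper's: both arguments reduce $\Kldim \widehat{R}_I$ to the supremum of $\height (\widehat{\mathfrak{m}})$ over the maximal ideals of $\widehat{R}_I$, identify these with the maximal ideals $\mathfrak{m}$ of $R$ containing $I$, show $\height (\widehat{\mathfrak{m}}) = \height (\mathfrak{m})$, and conclude with the equidimensionality of affine domains over a field. Where you differ is in how the two middle steps are justified. The paper cites Greco--Salmon both for the bijection on maximal ideals and for the preservation of height (the latter by appeal to the \emph{proof} of one of their theorems), whereas you derive them from scratch: the bijection via $I\widehat{R}_I \subseteq \mathrm{Jac}(\widehat{R}_I)$ together with $\widehat{R}_I/I\widehat{R}_I \cong R/I$, and the height equality via the dimension formula $\dim B = \dim A + \dim (B/\mathfrak{m}_A B)$ for the flat local homomorphism $R_{\mathfrak{m}} \to (\widehat{R}_I)_{\widehat{\mathfrak{m}}}$, whose closed fibre you correctly identify as the field $R/\mathfrak{m}$ using $I \subseteq \mathfrak{m}$. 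This buys a self-contained argument built from standard facts about flat local maps, at the cost of a little more writing; the paper's version is shorter but leans on an external reference. The only point worth making fully explicit in your write-up is that $\widehat{\mathfrak{m}} \cap R = \mathfrak{m}$, which is what guarantees that $R \setminus \mathfrak{m}$ is inverted in $(\widehat{R}_I)_{\widehat{\mathfrak{m}}}$; this follows immediately from your description of the correspondence, so there is no gap.
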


\begin{proof}
Let $\mathfrak{n}$ be a maximal ideal of $\widehat{R}_I$, then \cite[Corollary 2.19]{GS} shows that $\mathfrak{n} \mapsto \mathfrak{n} \cap R$ defines a bijection between the maximal ideals of $\widehat{R}_I$ and the maximal ideals of $R$ containing $I$. Moreover, the proof of \cite[Theorem 7.5]{GS} says that $\height (\mathfrak{n}) = \height (\mathfrak{n} \cap R)$. Therefore $\Kldim \widehat{R}_I = \sup \{ \height (\mathfrak{m}) \}$, where $\mathfrak{m}$ ranges over all maximal ideals of $R$ that contain $I$. Since $R$ is an affine domain over $k$, \cite[Theorem A]{Eisenbud}) says that $\height (\mathfrak{m}) = \Kldim R$ for all maximal ideals of $R$, hence $\Kldim R = \Kldim \widehat{R}_I$. 
\end{proof}

\subsection{}\label{lem:primes} It will be particularly important for us later to understand what happens to prime ideals when passing to completions.

\begin{lem}
Choose a prime ideal $P \triangleleft R$ such that $P \otimes_R \widehat{R}_I \neq \widehat{R}_I$ and $Q$ a prime ideal of $\widehat{R}_I$. Then
\begin{enumerate}

\item For each prime $Q'$ minimal over $P \otimes_R \widehat{R}_I$, $\height (Q') = \height (P)$ and $Q' \cap R = P$. 

\item $Q \cap R$ is a prime ideal and $\height (Q) = \height (Q \cap R)$.

\item If $I \subseteq P$ then $P \otimes_R \widehat{R}_I$ is prime in $\widehat{R}_I$.

\end{enumerate}
\end{lem}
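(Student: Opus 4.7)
The common tools will be flatness of the completion map $j : R \to \widehat{R}_I$ (standard for Noetherian $R$) together with the standard identification of $\widehat{R}_I / P\widehat{R}_I$ with the $I$-adic completion of $R/P$.

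I would dispose of (3) first: when $I \subseteq P$ the image of $I$ in $R/P$ vanishes, so the induced topology on $R/P$ is discrete and its completion is $R/P$ itself. Hence $\widehat{R}_I / P\widehat{R}_I \cong R/P$, a domain, and therefore $P\widehat{R}_I$ is prime.

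For (1), flatness of $j$ supplies the going-down property for $R \subseteq \widehat{R}_I$. Let $Q'$ be minimal over $P\widehat{R}_I$. The containment $P \subseteq Q' \cap R$ is automatic; if it were strict, going-down would produce a prime of $\widehat{R}_I$ lying strictly between $P\widehat{R}_I$ and $Q'$, contradicting minimality, so $Q' \cap R = P$. To compute the height I would pass to the flat local homomorphism $R_P \to (\widehat{R}_I)_{Q'}$ of Noetherian local rings: minimality of $Q'$ over $P\widehat{R}_I$ makes the closed fibre $(\widehat{R}_I)_{Q'}/P(\widehat{R}_I)_{Q'}$ Artinian, so the dimension formula for flat local maps yields $\height(Q') = \height(P)$.

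For (2), $Q \cap R$ being prime is immediate from the contraction property of ring maps. Setting $P := Q \cap R$, the inclusion $P\widehat{R}_I \subseteq Q \neq \widehat{R}_I$ ensures that (1) applies to $P$. Choosing a minimal prime $Q'$ of $\widehat{R}_I$ over $P\widehat{R}_I$ contained in $Q$ gives $\height(Q) \geq \height(Q') = \height(P)$ by (1). The main obstacle is the reverse inequality $\height(Q) \leq \height(P)$: I would attempt to show that $Q$ itself is minimal over $P\widehat{R}_I$, forcing $Q = Q'$ and closing the argument; this reduces to showing that the closed fibre of the flat local map $R_P \to (\widehat{R}_I)_Q$ has dimension zero. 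Given how the author cited \cite[Theorem 7.5]{GS} for the analogous statement about maximal ideals in Lemma \ref{lem:dim}, the most plausible route is to adapt that result directly to arbitrary primes, exploiting the catenarity of the affine domain $R$ and the preservation of height under contraction that the proof of 7.5 establishes.
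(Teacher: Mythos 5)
Your treatments of parts (1) and (3) are correct and take genuinely different (and shorter) routes than the paper's. For (3) the paper argues by hand inside the inverse limit, checking that a product of two elements outside $\widehat{P}$ cannot lie in $\widehat{P}$ by reducing modulo $I^N$; you instead identify $\widehat{R}_I/P\widehat{R}_I$ with the $I$-adic completion of $R/P$, which is $R/P$ itself when $I \subseteq P$ because the induced topology is discrete. For the height claim in (1) the paper deduces it from part (2) via a chain argument, whereas you apply the dimension formula for the flat local map $R_P \rightarrow (\widehat{R}_I)_{Q'}$ and observe that minimality of $Q'$ over $P\widehat{R}_I$ forces the closed fibre to be zero-dimensional. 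Both of your arguments are standard and sound.

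Part (2) is where the real content lies, and your proposal does not close it. You reduce $\height(Q) \le \height(Q \cap R)$ to showing that $Q$ is minimal over $(Q\cap R)\widehat{R}_I$, i.e. that the closed fibre of $R_{Q \cap R} \rightarrow (\widehat{R}_I)_Q$ is zero-dimensional. For an arbitrary prime $Q$ of $\widehat{R}_I$ this is false: take $R = \C[x,y]$, $I = (x,y)$, so $\widehat{R}_I = \C[[x,y]]$, and let $Q = (y - f(x))$ where $f \in \C[[x]]$ satisfies $f(0) = 0$ and is transcendental over $\C(x)$ (e.g. $f = e^x - 1$). Then $Q$ is a height-one prime with $Q \cap R = 0$, so the fibre over $(0)$ is positive-dimensional at $Q$ and $\height(Q) \neq \height(Q \cap R)$. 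Consequently "adapting [GS, Theorem 7.5] to arbitrary primes" cannot succeed verbatim; some restriction on $Q$ is unavoidable. The paper's own proof proceeds differently: it fixes a maximal chain $P_0 \supset \cdots \supset P_n = 0$ in $R$ through $Q \cap R$ with $I \subseteq P_0$, uses catenarity of the affine domain $R$ to get $n = \Kldim R$, lifts the chain by going-down starting from the unique maximal ideal of $\widehat{R}_I$ over $P_0$, and squeezes the heights using $\Kldim \widehat{R}_I = \Kldim R$. Note, however, that this produces \emph{some} prime $Q_i$ with $Q_i \cap R = Q \cap R$ and $\height(Q_i) = \height(Q\cap R)$, not necessarily the given $Q$ -- the example above shows this distinction is essential, since there both $0$ and $(y-f(x))$ contract to $0$. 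So to repair your part (2) you must either reproduce the paper's squeeze while arranging for the lifted chain to pass through the given $Q$ (which is only possible for the restricted class of primes $Q$ actually used later, e.g. those obtained by going-down from chains terminating in a maximal ideal containing $I$), or add a hypothesis on $Q$; as written, your plan stalls at a step that fails in the stated generality.
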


\begin{proof}
Clearly $Q \cap R$ is a prime ideal. By \cite[Theorem 7.2]{Eisenbud}, $\widehat{R}_I$ is a flat extension of $R$ therefore \cite[Lemma 10.11]{Eisenbud} shows that (Going down) holds. Now let $Q'$ be a prime minimal over $P \otimes_R \widehat{R}_I$. If $Q' \cap R \neq P$ then by (Going down) there exists a prime $Q_0 \subsetneq Q'$ such that $Q_0 \, \cap \, R = P \subsetneq Q' \, \cap \, R$. But then $P \otimes_R \widehat{R}_I \subset Q_0$, contradicting the minimality of $Q'$. Fix a maximal chain of primes $P_0 \supset P_1 \supset \dots \supset P_n = 0$ such that $P_i = Q \cap R$ and $I \subseteq P_0$. By \cite[Theorem A, page 286]{Eisenbud}, $R$ is universally caternary hence $n = \Kldim R$. The result \cite[Corollary 2.19]{GS} says that there is a unique maximal ideal $\mathfrak{n} =: Q_0$ of $\widehat{R}_I$ such that $\mathfrak{n} \cap R = P_0$. The proof of Lemma \ref{lem:dim} shows that $\Kldim R = \height (\mathfrak{m}) =  \height (\mathfrak{n}) =  \Kldim \widehat{R}_I$. Applying (Going down) to $P_0 \supset P_1$ shows that there exists a prime $Q_1$ such that $Q_1 \cap R = P_1$ and $Q_1 \subsetneq Q_0$. Clearly $\height (P_1) \ge \height (Q_1)$. By repeating this argument we get a chain of primes $Q_0 \supset Q_1 \supset \dots \supset Q_n$ such that $Q_i \cap R = P_i$ and $\height (P_i) \ge \height (Q_i)$. But Lemma \ref{lem:dim} implies that we must have $\height (Q_i) = \height (P_i)$. In particular, $\height (Q) = \height (Q \cap R)$. This completes the proof of $(1)$ and $(2)$. \\
By \cite[Theorem 7.2]{Eisenbud}, $\widehat{P} :=  P \otimes_R \widehat{R}_I = \lim_{\infty \leftarrow n} \, P / I^n$ (note that $I \subset P$ implies $\widehat{P} \neq \widehat{R}_I$). Let us show that $\widehat{P}$ is prime. If not then there exist $a,b \in \widehat{R}_I \backslash \widehat{P}$ such that $a \cdot b \in \widehat{P}$. Therefore there exists some $N \>> 0$ such that $\bar{a},\bar{b} \in (R/I^N) \, \backslash \, (P/I^N)$ with $\bar{a} \cdot \bar{b} \in P/I^N$. But this is a contradiction since $P/I^N$ is prime.
\end{proof}

\subsection{}\label{lem:polyprimes} If $S_1$ and $S_2$ are $k$-algebras, complete with respect to the ideals $I_1$ and $I_2$ respectively then the completed tensor product is defined to be
\bdm
S_1 \, \widehat{\otimes} \, S_2 := \lim_{\infty \leftarrow n} (S_1 \otimes S_2) / J^n,
\edm
where $J := I_1 \otimes S_2 + S_1 \otimes I_2$.

\blem
Let $P$ be a prime ideal of $\widehat{R}_I$ and $Q$ the ideal generated by $P$ in $\widehat{R}_I \, \widehat{\otimes} \, k[[x]]$. Then $Q$ is prime.
\elem

\begin{proof}
Since $\widehat{R}_I$ is Noetherian, the ideal $P$ is finitely generated. By \cite[Theorem 7.2]{Eisenbud}, 
\bdm
Q = \lim_{\infty \leftarrow n} \, P \otimes k[x] / J^n = (P \otimes k[x]) \otimes_{\widehat{R}_I \,  \otimes \, k[x]} \, \widehat{R}_I \, \widehat{\otimes} \, k[[x]] = \{ \, \sum_{i \ge 0} p_i x^i \, | \, p_i \in P \, \},
\edm
is a finitely generated ideal in $\widehat{R}_I \, \widehat{\otimes} \, k[[x]]$, where $J = I \otimes k[x] + R \otimes (x)$. Now choose $a = \sum_{i \ge 0} a_i x^i, b = \sum_{j \ge 0 } b_j x^j \in \widehat{R}_I \, \widehat{\otimes} \, k[[x]]$ such that $a \cdot b \in Q$. If $a,b \notin Q$ then we can choose $r,s \in \N$ to be minimal with respect to the properties $a_r,b_s \notin P$. Then the fact that the coefficient of $x^{r +s}$ in the expansion of $a \cdot b$ lies in $P$ is a contradiction.
\end{proof}

\subsection{}\label{lem:Poisson} For the reminder of this section we make the additional assumptions that $R$ is a Poisson algebra with bracket $\{ \cdot , \cdot \}$ and that $k = \C$. An ideal $I$ of $R$ is said to be a \textit{Poisson ideal} if $\{ I, R \} \subset I$. A prime ideal that is Poisson is simply called a Poisson prime.  

\begin{lem}
Let $R$, $I$ be as above. We do not assume that $I$ is a Poisson ideal. 
\begin{enumerate}

\item $\widehat{R}_I$ is a Poisson algebra.

\item If $Q$ is a Poisson prime of $\widehat{R}_I$ then $Q \cap R$ is a Poisson prime ideal.

\item If $J$ is a Poisson ideal such that $J \otimes_R \widehat{R}_I \neq \widehat{R}_I$ then $J \otimes_R \widehat{R}_I$ is a Poisson ideal and any prime minimal over $J \otimes_R \widehat{R}_I$ is Poisson.

\end{enumerate}
\end{lem}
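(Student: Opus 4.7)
For part (1), the plan is to extend the Poisson bracket from $R$ to $\widehat R_I$ by $I$-adic continuity. Although $I$ is not assumed Poisson, the Leibniz rule still forces a one-step shift: expanding $\{a_1 \cdots a_n, b\} = \sum_k a_1 \cdots \{a_k, b\} \cdots a_n$ yields
\[
\{I^n, R\} \subseteq I^{n-1},
\]
and symmetrically in the second slot. Consequently, for $I$-adic Cauchy sequences $(a_n), (b_n) \subset R$, the telescoping identity $\{a_n, b_n\} - \{a_m, b_m\} = \{a_n - a_m, b_n\} + \{a_m, b_n - b_m\}$ shows that $(\{a_n, b_n\})$ is itself Cauchy, and its limit defines a bracket on $\widehat R_I$. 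Independence of the choice of Cauchy representatives, bilinearity, antisymmetry, Jacobi and Leibniz all pass to the completion by continuity. Part (2) is then immediate: the inclusion $R \hookrightarrow \widehat R_I$ is by construction a morphism of Poisson algebras, so for $a \in Q \cap R$ and $b \in R$ we have $\{a, b\} \in R \cap Q = Q \cap R$.

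For the first assertion of (3), I would identify $\widehat J := J \otimes_R \widehat R_I$ with the ideal $J \cdot \widehat R_I$ of $\widehat R_I$ (via flatness, as in the proof of Lemma~\ref{lem:primes}), which also coincides with the $I$-adic closure $\overline J$ of $J$. For a generator $j \in J$ and any $a \in \widehat R_I$, approximating $a$ by $(a_n) \subset R$ gives $\{j, a_n\} \in J$ since $J$ is Poisson, so by continuity $\{j, a\} = \lim \{j, a_n\} \in \overline J = \widehat J$. The Leibniz rule and continuity then propagate this to $\{\widehat J, \widehat R_I\} \subseteq \widehat J$.

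The crux of (3) is the minimal-prime statement. The strategy is first to show that $\sqrt{\widehat J}$ is itself a Poisson ideal (this is where the standing assumption $k = \C$, i.e.\ characteristic zero, is essential), and then to deduce the conclusion by a standard argument. For the radical step, given $a^N \in \widehat J$ and $c := \{a, b\}$, the identity $\{a^N, b\} = N a^{N-1} c \in \widehat J$ and divisibility by $N$ give $a^{N-1} c \in \widehat J$. An induction on $j$ then yields $a^{N-j} c^m \in \widehat J$ for all $m \ge 2j - 1$: applying $\{-, b\}$ to $a^{N-j} c^m$ produces
\[
(N - j)\, a^{N-j-1} c^{m+1} + m\, a^{N-j} c^{m-1} \{c, b\},
\]
whose second summand lies in $\widehat J$ whenever $m \ge 2j$ by the inductive hypothesis, while the left-hand side lies in $\widehat J$ since $\widehat J$ is Poisson. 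Invertibility of $N - j$ for $j < N$ then delivers the inductive step. At $j = N$ one obtains $c^{2N - 1} \in \widehat J$, so $\{a, b\} \in \sqrt{\widehat J}$. Once $\sqrt{\widehat J}$ is known to be Poisson, for a minimal prime $P$ over $\widehat J$ choose $q \in \bigcap_{P' \neq P} P' \smallsetminus P$; then for any $a \in P$, $aq \in \sqrt{\widehat J} \subseteq P$ and $\{aq, b\} = a\{q, b\} + q\{a, b\} \in P$ forces $q\{a, b\} \in P$, hence $\{a, b\} \in P$ since $q \notin P$. The main obstacle is precisely the radical-Poisson step: the inductive bookkeeping must be tuned so that the coefficient of $a^{N-j-1} c^{m+1}$ remains a nonzero integer, which is why characteristic zero is unavoidable.
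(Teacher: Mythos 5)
Your proposal is correct and follows essentially the same route as the paper: the bracket on $\widehat{R}_I$ is obtained from the one-step shift $\{I^{n},R\}\subseteq I^{n-1}$ (the paper phrases this as continuity of the derivation $\{f,-\}$ in the $I$-adic topology and writes the bracket explicitly as $\langle f,g\rangle_i=\{f_{i+1},g_{i+1}\}+I^i$), part (2) follows because the inclusion $R\hookrightarrow\widehat{R}_I$ is then a Poisson morphism, and the first assertion of (3) is the same approximation-by-elements-of-$J$ argument. The only divergence is at the minimal-prime step, where the paper simply cites Dixmier, Lemma 3.3.3 (using $k=\C$), whereas you prove that lemma from scratch via the characteristic-zero induction showing $\sqrt{\widehat{J}}$ is Poisson followed by prime avoidance; your argument is a correct rendering of exactly that cited result.
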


\begin{proof}
Each element of $\widehat{R}_I$ has the form $(f_i)_{i \in \N}$, where $f_i \in R/I^i$ and $f_j \equiv f_i \, \mod \, I^i$ for all $j > i$. The Poisson structure on $\widehat{R}_I$, (denoted $\langle \cdot , \cdot \rangle$) is defined as $\langle f, g \rangle_i := \{ f_{i+1},g_{i+1} \} + I^i$ (alternatively one can simply note that, for fixed $f \in R$, $\{ f , \, - \, \}$ is a derivation of $R$ and thus continuous in the $I$-adic topology). Denote by $\iota : R \hookrightarrow \widehat{R}_I$ the inclusion map. Let $f,g \in R$, then $\langle \iota (f), \iota(g) \rangle_i = \langle f + I^{i+1}, g + I^{i+1} \rangle_i = \{ f , g \} + I^i$. Therefore $\langle \iota (f), \iota (g) \rangle = \iota ( \{ f, g \})$ and $(2)$ follows from this.\\
\noindent To show that $J \otimes_R \widehat{R}_I$ is a Poisson ideal, choose $(f_i)_{i \in \N} \in J \otimes_R \widehat{R}_I$ and $(g_i)_{i \in \N} \in \widehat{R}_I$. Then, for each $i$ in $\N$, there exists $p_i \in J$ such that $p_i \equiv f_i \, \mod \, I^i$ and $\langle (f_i),(g_i) \rangle_i = \{ f_{i+1},g_{i+1} \} + I^i = \{ p_{i+1} , g_{i+1} \} + I^i \in (J + I^i)/I^i$. Hence $\langle J \otimes_R \widehat{R}_I, \widehat{R}_I \rangle \subset J  \otimes_R \widehat{R}_I$. Noting that $k = \C$,  \cite[Lemma 3.3.3]{Dixmier} says that the primes minimal over $J \otimes_R \widehat{R}_I$ are Poisson.
\end{proof} 

\subsection{}\label{lem:Poissonprimitive} Following \cite[Section 3.2]{Poisson Orders}, we define the \textit{Poisson core} of an ideal $J$ of $R$ to be the largest Poisson ideal of $R$ contained in $J$ and denoted it $\mathcal{C}(J)$. It exists because the sum of two Poisson ideals is again a Poisson ideal. If $J$ is prime then $\mathcal{C}(J)$ is also prime and when $\mathfrak{m}$ is maximal, $\mathcal{C}(\mathfrak{m})$ is said to be \textit{Poisson primitive}. We say that $\mf{m}$ is \textit{maximal and Poisson} if it is a maximal ideal of $R$ that is Poisson. Clearly, every maximal and Poisson ideal is Poisson primitive.

\begin{lem}
Let $R$ and $I$ be as above and choose $\mathfrak{m}$ a maximal ideal of $R$ containing $I$. Then every prime minimal over $\mathcal{C}(\mathfrak{m}) \otimes_R \widehat{R}_I$ is Poisson primitive and the Poisson core of $\mathfrak{m} \otimes_R \widehat{R}_I$ is one of these minimal primes. Conversely, if $J$ is a Poisson primitive ideal in $\widehat{R}_I$ then $J \cap R$ is Poisson primitive.
\end{lem}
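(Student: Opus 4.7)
The plan is to handle the converse direction first (which is a direct contraction/extension computation) and then to treat the forward direction in two stages: first placing the Poisson core of $\mathfrak{m}\,\widehat{R}_I$ among the minimal primes over $\mathcal{C}(\mathfrak{m})\,\widehat{R}_I$, then showing that every such minimal prime is itself Poisson primitive.

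For the converse, suppose $J = \mathcal{C}(\mathfrak{n}')$ for some maximal ideal $\mathfrak{n}'$ of $\widehat{R}_I$, and set $\mathfrak{m}' := \mathfrak{n}' \cap R$, a maximal ideal of $R$ containing $I$. Lemma \ref{lem:Poisson}(2) says $J \cap R$ is a Poisson ideal contained in $\mathfrak{m}'$, so $J \cap R \subseteq \mathcal{C}(\mathfrak{m}')$. For the reverse containment, \cite[Corollary 2.19]{GS} identifies $\mathfrak{m}' \widehat{R}_I$ as the unique maximal ideal of $\widehat{R}_I$ lying over $\mathfrak{m}'$, so $\mathfrak{m}' \widehat{R}_I = \mathfrak{n}'$; then $\mathcal{C}(\mathfrak{m}') \widehat{R}_I$ is Poisson by Lemma \ref{lem:Poisson}(3) and sits inside $\mathfrak{n}'$, hence inside $\mathcal{C}(\mathfrak{n}') = J$. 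Contracting yields $\mathcal{C}(\mathfrak{m}') \subseteq J \cap R$, completing the equality.

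For the forward direction, set $\mathfrak{n} := \mathfrak{m}\,\widehat{R}_I$. The same two containments applied with $\mathfrak{m}$ in place of $\mathfrak{m}'$ give $\mathcal{C}(\mathfrak{n}) \cap R = \mathcal{C}(\mathfrak{m})$, so Lemma \ref{lem:primes}(2) forces $\height(\mathcal{C}(\mathfrak{n})) = \height(\mathcal{C}(\mathfrak{m}))$. By Lemma \ref{lem:primes}(1), every minimal prime over $\mathcal{C}(\mathfrak{m})\,\widehat{R}_I$ shares this common height. Since $\mathcal{C}(\mathfrak{n}) \supseteq \mathcal{C}(\mathfrak{m})\,\widehat{R}_I$, $\mathcal{C}(\mathfrak{n})$ contains some minimal prime $Q_0'$, and the elementary Noetherian observation that a strict inclusion $Q_0' \subsetneq \mathcal{C}(\mathfrak{n})$ would force $\height(Q_0') < \height(\mathcal{C}(\mathfrak{n}))$ forces equality. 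Hence $\mathcal{C}(\mathfrak{n})$ is itself a minimal prime.

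To show that an arbitrary minimal prime $Q'$ over $\mathcal{C}(\mathfrak{m})\,\widehat{R}_I$ is Poisson primitive, the strategy is to find a maximal ideal $\mathfrak{n}'$ of $\widehat{R}_I$ containing $Q'$ such that $\mathfrak{m}' := \mathfrak{n}' \cap R$ still satisfies $\mathcal{C}(\mathfrak{m}') = \mathcal{C}(\mathfrak{m})$. Once this is done, applying the previous paragraph with $\mathfrak{m}'$ replacing $\mathfrak{m}$ identifies $\mathcal{C}(\mathfrak{n}')$ as a minimal prime over $\mathcal{C}(\mathfrak{m}')\,\widehat{R}_I = \mathcal{C}(\mathfrak{m})\,\widehat{R}_I$; since $Q'$ is Poisson by Lemma \ref{lem:Poisson}(3) and sits inside $\mathfrak{n}'$, we get $Q' \subseteq \mathcal{C}(\mathfrak{n}')$, and the shared height forces $Q' = \mathcal{C}(\mathfrak{n}')$. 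The main obstacle is producing $\mathfrak{n}'$: any $\mathfrak{n}' \supseteq Q'$ gives $\mathfrak{m}' \supseteq Q' \cap R = \mathcal{C}(\mathfrak{m})$ and hence $\mathcal{C}(\mathfrak{m}') \supseteq \mathcal{C}(\mathfrak{m})$, but this inclusion can be strict when $\mathfrak{m}'$ lies on a deeper symplectic leaf inside $V(\mathcal{C}(\mathfrak{m}))$. I expect the resolution to come from a density argument: the locus of closed points of $V(\mathcal{C}(\mathfrak{m}))$ whose Poisson core equals $\mathcal{C}(\mathfrak{m})$ is Zariski open and dense, and each formal branch $V(Q')$ — an irreducible component of the completion of $V(\mathcal{C}(\mathfrak{m}))$ along $V(I)$ — must contain closed points in this open stratum, any one of which supplies the required maximal ideal $\mathfrak{n}'$.
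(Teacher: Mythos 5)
Your treatment of the converse and of the claim that $\mathcal{C}(\mathfrak{m}\otimes_R\widehat{R}_I)$ is a minimal prime over $\mathcal{C}(\mathfrak{m})\otimes_R\widehat{R}_I$ is correct and complete; it runs on the same height-comparison mechanism (Lemmata \ref{lem:primes} and \ref{lem:Poisson}) as the paper's proof, written out with more care. The genuine gap is in your third step, and you have located it but not closed it: the sentence beginning ``I expect the resolution to come from a density argument'' is not a proof, and the density argument you sketch cannot work as stated. The closed points lying on the formal branch $V(Q')$ are not dense in $V(\mathcal{C}(\mathfrak{m}))$; under the bijection of \cite[Corollary 2.19]{GS} they correspond exactly to those points of $V(\mathcal{C}(\mathfrak{m}))\cap V(I)$ sitting on that branch, and nothing forces this finite-codimension slice to meet the open stratum. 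Concretely, take $R=\C[x,y,z]$ with $\{x,y\}=z$, $\{y,z\}=x$, $\{z,x\}=y$, let $p$ be a nonzero point of the cone $x^2+y^2+z^2=0$, and put $\mathfrak{m}=\mathfrak{m}_p$, $I=\mathfrak{m}_p\cap\mathfrak{m}_0$. Then $\mathcal{C}(\mathfrak{m})=(x^2+y^2+z^2)$, $\widehat{R}_I\simeq\widehat{R}_{\mathfrak{m}_p}\times\C[[x,y,z]]$, and $Q'=\widehat{R}_{\mathfrak{m}_p}\times(x^2+y^2+z^2)$ is a minimal prime over $\mathcal{C}(\mathfrak{m})\otimes_R\widehat{R}_I$ (it is prime because $x^2+y^2+z^2$ is irreducible in $\C[[x,y,z]]$, and Poisson by Lemma \ref{lem:Poisson}). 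The only maximal ideal of $\widehat{R}_I$ containing $Q'$ is the one over the vertex $\mathfrak{m}_0$, which is itself a Poisson ideal; so every Poisson primitive ideal containing $Q'$ has height $3$ while $\height(Q')=1$, and $Q'$ is not Poisson primitive.

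You should be aware that the paper's own proof has exactly the same soft spot: it dismisses the minimal primes not contained in $\mathfrak{m}\otimes_R\widehat{R}_I$ with the phrase ``it suffices to consider the case $P\subseteq\mathfrak{m}\otimes_R\widehat{R}_I$,'' which tacitly replaces $\mathfrak{m}$ by another maximal ideal $\mathfrak{m}'\supseteq I$ containing $P$ and needs $\mathcal{C}(\mathfrak{m}')=\mathcal{C}(\mathfrak{m})$ --- precisely the point at issue, and precisely what fails in the example above. So your instinct about where the difficulty sits is right, but to obtain a correct statement you must either restrict the first assertion to the minimal primes contained in $\mathfrak{m}\otimes_R\widehat{R}_I$ (which is all that your second step, and the paper's intended argument, actually proves, and is enough for the applications provided the base point is chosen on the open stratum), or add a hypothesis guaranteeing that every branch of $V(\mathcal{C}(\mathfrak{m}))$ along $V(I)$ passes through a closed point of $V(I)$ whose Poisson core is exactly $\mathcal{C}(\mathfrak{m})$. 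A density argument on $V(\mathcal{C}(\mathfrak{m}))$ alone will not supply such a point.
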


\begin{proof}
By \cite[Corollary 2.19]{GS}, $I \subset \mathfrak{m}$ implies that $\widehat{R}_I \neq \mathfrak{m} \otimes_R \widehat{R}_I $ is a maximal ideal of $\widehat{R}_I$. Therefore $\mathcal{C}(\mathfrak{m}) \otimes_R \widehat{R}_I$ is also a proper ideal of $\widehat{R}_I$, which is Poisson by Lemma \ref{lem:Poisson}. Let $P$ be a prime minimal over $\mathcal{C}(\mathfrak{m}) \otimes_R \widehat{R}_I$. Again by Lemma \ref{lem:Poisson}, it is Poisson. Since \cite[Corollary 2.19]{GS} says that there is a bijection between maximal ideals of $\widehat{R}_I$ and maximal ideals of $R$ containing $I$ it suffices to consider the case $P \subseteq \mathfrak{m} \otimes_R \widehat{R}_I$. If $\mathcal{C}(\mathfrak{m}) = \mathfrak{m}$ then the result is trivial therefore, without loss of generality, $\mathcal{C}(\mathfrak{m}) \subsetneq \mathfrak{m}$. Assume that $P$ is not the Poisson core of $\mathfrak{m} \otimes_R \widehat{R}_I$, so that $P \subsetneq Q = \mathcal{C}(\mathfrak{m} \otimes_R \widehat{R}_I) \subseteq \mathfrak{m} \otimes_R \widehat{R}_I$. By Lemma \ref{lem:primes}, $\mathcal{C}(\mathfrak{m}) = R \cap P \subseteq Q \cap R \subseteq \mathfrak{m} \otimes_R \widehat{R}_I \cap R = \mathfrak{m}$, and Lemma \ref{lem:Poisson} says that $Q \cap R$ is a Poisson prime. Therefore $Q \cap R = \mathcal{C}(\mathfrak{m})$ by maximality. But Lemma \ref{lem:primes} says that
\begin{displaymath}
\height \mathcal{C}(\mathfrak{m}) = \height (P) < \height (Q) = \height (Q \cap R).
\end{displaymath}
This contradiction shows that $P$ is Poisson primitive. The same argument also implies the converse statement.
\end{proof}

\subsection{}\label{sec:Poissondefinition} Now let $\msf{A}$ be a $\C$-algebra, $\mbf{t}$ a central non-zero divisor and $\rho \, : \, \msf{A} \twoheadrightarrow A := \msf{A} / \mbf{t} \cdot \msf{A}$ the quotient map. Assume that there exists an affine central subalgebra $Z$ of $A$ such that $A$ is a finite module over $Z$. Let $\{ z_i \, : \, i \in I \}$ be a $\C$-basis for $Z$ and choose a lift $\hat{z}_i$ of $z_i$ in $\msf{A}$ for every $i \in I$. As noted in \cite[(2.2)]{Poisson Orders}, the rule
\beq\label{eq:Poissonbraket}
\{ z_i, z_j \} = \rho([ \hat{z}_i, \hat{z}_j] / \mbf{t})
\eeq
extends by linearity to a Poisson bracket on $Z$. The bracket is independent of the choice of lifts $\hat{z}_i$. If $a \in A$ and we choose a lift $\hat{a}$ of $a$ in $\msf{A}$ then equation (\ref{eq:Poissonbraket}) defines an action of $Z$ on $A$, $z_i \cdot a := \rho([ \hat{z}_i, \hat{a}] / \mbf{t})$, making $A$ into a Poisson module for $Z$.\\

\subsection{}\label{sec:2algebras} For $i = 1,2$ we choose $\msf{A}_i$ to be a $\C$-algbera, $\mbf{t}_i \in \msf{A}_i$ a central non-zero divisor and $\rho_i \, : \, \msf{A}_i \twoheadrightarrow A_i := \msf{A}_i / \mbf{t}_i \msf{A}_i$. Assume that there exists a finite dimensional, abelian Lie subaglebra $\mf{n}_i$ of $\msf{A}_i$ such that the adjoint action of $\mf{n}_i$ on $\msf{A}_i$ is locally nilpotent. Denote by $\Ui$ the associative subalgebra (without unit) in $\msf{A}_i$ generated by $\mf{n}_i$ and let $\Ui^k$ be the $k^{th}$ power of $\Ui$ ($k \in \N$). As noted in \cite[(5.1)]{Primitive}, for any $a \in \msf{A}_i$ there exists $n \in \Z$ (depending on $a$) such that
\beq\label{eq:nilpotent}
a \cdot \Ui^k \subset \Ui^{k + n} \cdot \msf{A}_i \qquad \forall k \gg 0.
\eeq
We make the additional assumption that the image of $\mf{n}_i$ under $\rho_i$ is contained in the centre $Z_i$ of $A_i$. The ideal generated in $Z_i$ by $\rho_i(\mf{n}_i)$ will be denoted $I_i$. We assume that $Z_i$ is affine and $A_i$ a finite module over $Z_i$. Property (\ref{eq:nilpotent}) implies that the space
\bdm
\widehat{\msf{A}}_i := \lim_{\infty \leftarrow k} \msf{A}_i \, / \, \Ui^k \cdot \msf{A}_i, \quad i = 1,2
\edm
is an associative algebra that is complete with respect to the topology on $\msf{A}_i$ defined by the set $\{ \Ui^k \cdot \msf{A}_i \}_{k \ge 1}$ of fundamental neighborhoods of zero. 

\subsection{}\label{lem:centres} Finally, we assume that there exists an isomorphism
\bdm
\theta \, : \, \widehat{\msf{A}}_1 \stackrel{\sim}{\longrightarrow} \widehat{\msf{A}}_2
\edm
such that $\theta(\mbf{t}_1) = \mbf{t}_2$ and $\theta( \, \Uone^k \cdot \widehat{\msf{A}}_1) = \Utwo^k \cdot \widehat{\msf{A}}_2$ for all $k \ge 0$ (thus $\theta$ is a homeomorphism). We write $ \widehat{A}_i :=  \widehat{\msf{A}}_i \, / \, \mbf{t}_i \cdot  \widehat{\msf{A}}_i$ and let $ \widehat{Z}_i$ be the completion of $Z_i$ with respect to the ideal $I_i$.

\begin{lem}
Let $\msf{A}_i$, $\Ui$, $Z_i$ and $I_i$ be as above. Then
\bdm
Z( \widehat{A}_i) =  \widehat{Z}_i.
\edm
\end{lem}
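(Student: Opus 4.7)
The plan is to identify $\widehat{A}_i$ explicitly as $A_i \otimes_{Z_i} \widehat{Z}_i$, and then obtain the centre by a flat base change argument applied to a short exact sequence expressing $Z_i$ as a kernel.

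\emph{Step 1: identification of $\widehat{A}_i$.} Unwinding the inverse limit defining $\widehat{\msf{A}}_i$ and using that $\mbf{t}_i$ is a central non-zero divisor, one checks that
$$\widehat{A}_i \;=\; \widehat{\msf{A}}_i \big/ \mbf{t}_i \widehat{\msf{A}}_i \;\cong\; \lim_{\infty \leftarrow k} A_i \big/ \rho_i(\Ui^k) A_i.$$
Because $\mf{n}_i$ is abelian, $\Ui^k$ is spanned by $k$-fold products of elements of $\mf{n}_i$; since $\rho_i(\mf{n}_i) \subset Z_i$, we have $\rho_i(\Ui^k) = \rho_i(\mf{n}_i)^k \subset Z_i$, and this set generates the ideal $I_i^k$. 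Hence $\rho_i(\Ui^k) A_i = I_i^k A_i$, and $\widehat{A}_i$ is just the $I_i$-adic completion of the $Z_i$-module $A_i$. Since $A_i$ is finite over the affine (hence Noetherian) algebra $Z_i$, \cite[Theorem 7.2]{Eisenbud} furnishes the canonical identification $\widehat{A}_i = A_i \otimes_{Z_i} \widehat{Z}_i$.

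\emph{Step 2: computing the centre by flat base change.} The inclusion $\widehat{Z}_i \subseteq Z(\widehat{A}_i)$ is clear, as $\widehat{Z}_i$ acts on $A_i \otimes_{Z_i} \widehat{Z}_i$ through the second tensor factor and is commutative. For the reverse inclusion, choose $Z_i$-module generators $a_1, \dots, a_n$ of $A_i$. Since $Z_i$ is central, each commutator map $\phi_j := [-, a_j] : A_i \to A_i$ is $Z_i$-linear, and the hypothesis $Z(A_i) = Z_i$ gives an exact sequence of $Z_i$-modules
$$0 \longrightarrow Z_i \longrightarrow A_i \xrightarrow{(\phi_1, \dots, \phi_n)} A_i^{\oplus n}.$$
Applying $- \otimes_{Z_i} \widehat{Z}_i$, which is exact because $I_i$-adic completion over a Noetherian ring is flat (\cite[Theorem 7.2]{Eisenbud}), and invoking Step 1, yields
$$0 \longrightarrow \widehat{Z}_i \longrightarrow \widehat{A}_i \xrightarrow{([-, a_1 \otimes 1], \dots, [-, a_n \otimes 1])} \widehat{A}_i^{\oplus n}.$$
Because the $a_j \otimes 1$ generate $\widehat{A}_i$ as a $\widehat{Z}_i$-module and $\widehat{Z}_i$ lies in the centre of $\widehat{A}_i$, an element of $\widehat{A}_i$ commutes with every $a_j \otimes 1$ if and only if it is central in $\widehat{A}_i$. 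Hence the kernel on the right is precisely $Z(\widehat{A}_i)$, and exactness gives $Z(\widehat{A}_i) = \widehat{Z}_i$.

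The genuinely content-bearing step is Step 1: correctly identifying the quotient $\widehat{\msf{A}}_i / \mbf{t}_i \widehat{\msf{A}}_i$ with the $I_i$-adic completion of $A_i$. One must verify that forming the quotient by the central regular element $\mbf{t}_i$ commutes with the inverse limit, for which the local nilpotence property (\ref{eq:nilpotent}) — ensuring the $\Ui^k \cdot \msf{A}_i$ form a two-sided filtration with good Mittag-Leffler behaviour — together with $\mbf{t}_i$ being a non-zero divisor, is exactly what is needed. Once this identification is in hand, Step 2 is a formal consequence of flatness of completion.
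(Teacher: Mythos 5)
Your proof is correct, and it packages the key step differently from the paper. Both arguments rest on the same two pillars: the identification $\widehat{A}_i = A_i \otimes_{Z_i} \widehat{Z}_i$ and the flatness of $\widehat{Z}_i$ over the Noetherian ring $Z_i$. The paper simply asserts the identification and then proves $Z(\widehat{A}_i) \subseteq \widehat{Z}_i$ by an explicit induction on a generating set $a_1, \dots, a_n$ of $A_i$: given a central $h = \sum_j h_j \otimes z_j$ with the $h_j$ commuting with $a_1,\dots,a_{l-1}$, the relation $\sum_j [a_l,h_j]\otimes z_j = 0$ is unwound via the equational criterion for flatness to produce a new expression whose coefficients also commute with $a_l$. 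Your Step 2 achieves the same end more conceptually, by observing that $Z_i$ is the kernel of the $Z_i$-linear map $(\phi_1,\dots,\phi_n) : A_i \to A_i^{\oplus n}$, $\phi_j = [-,a_j]$, and that flat base change along $Z_i \to \widehat{Z}_i$ preserves this kernel; this is a clean repackaging of the same flatness input and avoids the bookkeeping of the induction. (Note that the identification of the kernel with the full centre on both sides uses, as you say, that the $a_j$ generate over a central subring --- the paper's normalisation $a_1 = 1$ plays the same role.) Your Step 1 supplies a justification for $\widehat{A}_i \cong A_i \otimes_{Z_i}\widehat{Z}_i$, including the reduction $\rho_i(\Ui^k)A_i = I_i^k A_i$ and the need to commute the quotient by the regular central element $\mbf{t}_i$ past the inverse limit; the paper states this identification without proof, so you have if anything done more here. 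No gaps.
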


\begin{proof}
Since $Z_i$ is a Noetherian ring, $\widehat{Z}_i$ is a flat $Z_i$-module and $\widehat{A}_i = A_i \otimes_{Z_i} \widehat{Z}_i$. We choose a generating set $a_1, \ds, a_n$ of $A_i$ as a module over $Z_i$ and assume without loss of generality that $a_1 = 1$. The flatness of $\widehat{Z}_i$ implies that the natural map $\widehat{Z}_i \rightarrow \widehat{A}_i$ is an embedding. Its image is central, therefore it suffices to show that $Z(\widehat{A}_i) \subseteq \widehat{Z}_i$. Let $h$ be central in $\widehat{A}_i$. We prove by induction on $1 \le l \le n$ that there exist $h_j \in A$ and $z_j \in \widehat{Z}_i$ such that $h = \sum_j h_j \otimes z_j$ and the $h_j$'s commute with every $a_t$, $t \le l$. This is clear when $l = 1$. Therefore assume $l >1$ and that there exist $h_j,z_j$ such that $h = \sum_j h_j \otimes z_j$ and the $h_j$'s commute with all $a_t, \, t < l$. Since $\sum_j [a_l,h_j] \otimes z_j = 0$, the flatness of $\widehat{Z}_i$ implies that there exist $b_{jk} \in Z_i$ and $z_k' \in \widehat{Z}_i$ such that 
\begin{enumerate}
\item $\sum_{k} b_{jk} z_k' = z_j$ in $\widehat{Z}_i$,
\item $\sum_j [a_l,h_j] b_{jk} = 0$ in $A_i$ i.e. $[a_l,\sum_{j} h_j b_{jk}] = 0$.
\end{enumerate}    
Therefore $h_k' := \sum_j h_j b_{jk}$ commutes with $a_1, \ds, a_{l-1}, a_l$. However $(1)$ also implies that $h = \sum_k h_k' \otimes z_k'$. Therefore induction implies that $h \in \widehat{Z}_i$.
\end{proof}

\begin{prop}\label{prop:poissoniso}
Assume that $Z_i$ is a direct summand of $A_i$ as a $Z_i$-module. The isomorphism $\theta$ induces a \textbf{Poisson} isomorphism 
\bdm
\theta \, : \, \widehat{Z}_1 \stackrel{\sim}{\longrightarrow} \widehat{Z}_2
\edm
\end{prop}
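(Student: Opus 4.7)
The plan is to produce the algebra isomorphism $\theta \, : \, \widehat{Z}_1 \stackrel{\sim}{\longrightarrow} \widehat{Z}_2$ first, and then to present the Poisson bracket on each $\widehat{Z}_i$ via a formula intrinsic to $\widehat{\msf{A}}_i$ for which $\theta$-compatibility is built in. Since $\theta(\mbf{t}_1) = \mbf{t}_2$, we have $\theta(\mbf{t}_1 \widehat{\msf{A}}_1) = \mbf{t}_2 \widehat{\msf{A}}_2$, so $\theta$ descends to an algebra isomorphism $\overline{\theta} \, : \, \widehat{A}_1 \stackrel{\sim}{\longrightarrow} \widehat{A}_2$ of the quotients. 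This restricts to an isomorphism of centres, and by Lemma \ref{lem:centres} the centre of $\widehat{A}_i$ is precisely $\widehat{Z}_i$, producing the desired algebra isomorphism $\theta \, : \, \widehat{Z}_1 \stackrel{\sim}{\longrightarrow} \widehat{Z}_2$.

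For Poisson compatibility I would mimic the construction of (\ref{eq:Poissonbraket}) at the level of $\widehat{\msf{A}}_i$. Given $z, w \in \widehat{Z}_i$ and lifts $\hat{z}, \hat{w} \in \widehat{\msf{A}}_i$, centrality modulo $\mbf{t}_i$ forces $[\hat{z}, \hat{w}] \in \mbf{t}_i \widehat{\msf{A}}_i$; pick $c \in \widehat{\msf{A}}_i$ with $[\hat{z}, \hat{w}] = \mbf{t}_i \cdot c$ and set $\langle z, w\rangle := \widehat{\rho}_i(c)$, where $\widehat{\rho}_i \, : \, \widehat{\msf{A}}_i \twoheadrightarrow \widehat{A}_i$ is the canonical quotient. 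After verifying independence of the choices, $\widehat{Z}_i$-valuedness, and the Poisson axioms, I would check that this bracket agrees with the Poisson structure on $\widehat{Z}_i$ obtained by completing $Z_i$ (Lemma \ref{lem:Poisson}(1)): for $z, w \in Z_i \subseteq \widehat{Z}_i$ one may take lifts in $\msf{A}_i \subseteq \widehat{\msf{A}}_i$, and then the formula collapses to (\ref{eq:Poissonbraket}); since both brackets are continuous in the $I_i$-adic topology and $Z_i$ is dense in $\widehat{Z}_i$, they coincide throughout. With the intrinsic formula in hand, compatibility with $\theta$ is tautological: if $[\hat{z}, \hat{w}] = \mbf{t}_1 c$ in $\widehat{\msf{A}}_1$, then $\theta(\hat{z}), \theta(\hat{w})$ lift $\theta(z), \theta(w) \in \widehat{Z}_2$, and $[\theta(\hat{z}), \theta(\hat{w})] = \theta(\mbf{t}_1 c) = \mbf{t}_2 \theta(c)$, giving $\langle \theta(z), \theta(w)\rangle = \widehat{\rho}_2(\theta(c)) = \theta(\langle z, w\rangle)$.

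The principal obstacle is the well-definedness of the element $c$ with $[\hat{z}, \hat{w}] = \mbf{t}_i c$, equivalently the statement that $\mbf{t}_i$ remains a non-zero divisor in $\widehat{\msf{A}}_i$. This is where the hypothesis that $Z_i$ is a direct summand of $A_i$ as a $Z_i$-module is expected to be used: combined with the local nilpotence of $\mf{n}_i$ that controls the $\Ui$-adic completion, the splitting $A_i = Z_i \oplus M_i$ should allow one to lift the non-zero divisor property of $\mbf{t}_i$ from $\msf{A}_i$ through each level of the inverse system defining $\widehat{\msf{A}}_i$ and so conclude in the limit.
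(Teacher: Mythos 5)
Your opening step coincides with the paper's: since $\theta(\mbf{t}_1) = \mbf{t}_2$, the map $\theta$ descends to $\widehat{A}_1 \stackrel{\sim}{\rightarrow} \widehat{A}_2$, restricts to centres, and Lemma \ref{lem:centres} identifies those centres with $\widehat{Z}_i$. The divergence, and the trouble, is in the Poisson step. The paper never divides by $\mbf{t}_i$ inside the completed algebra $\widehat{\msf{A}}_i$: it works componentwise, representing $u \in \widehat{Z}_1$ by its components $u_{i+1} \in Z_1/I_1^{i+1}$, lifting representatives from $Z_1$ to the \emph{uncompleted} algebra $\msf{A}_1$ (where $\mbf{t}_1$ is a non-zero divisor by hypothesis), and computing $(\{u,v\})_i = \rho_1([\hat{u}_{i+1},\hat{v}_{i+1}]/\mbf{t}_1) \, \mod \, I_1^i$. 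Your intrinsic bracket instead requires $\mbf{t}_i$ to remain a non-zero divisor in $\widehat{\msf{A}}_i$. You flag this as the principal obstacle, but the route you propose for it is doubtful: the direct-summand hypothesis concerns $Z_i \subseteq A_i = \msf{A}_i/\mbf{t}_i \msf{A}_i$ and says nothing about $\mbf{t}_i$-torsion in the quotients $\msf{A}_i/\Ui^k \cdot \msf{A}_i$; to push ``non-zero divisor'' through the inverse limit you would need a uniform bound of the form $\mbf{t}_i x \in \Ui^k \cdot \msf{A}_i \Rightarrow x \in \Ui^{k-m} \cdot \msf{A}_i$, which is neither established nor what the hypothesis is designed to give.

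The second gap is that identifying your intrinsic bracket with the bracket of Lemma \ref{lem:Poisson}(1) by ``density and continuity'' presupposes that the intrinsic bracket is continuous for the $I_i$-adic topology, and that is exactly where the content lies: one must (a) divide by $\mbf{t}_i$ while keeping control of the $\Ui$-adic filtration (the same unproved bound as above), and (b) translate between the filtration $\Ui^k \cdot A_i$ on $A_i$, which is what $\theta$ and the completion control, and the filtration $I_i^k$ on $Z_i$, which is what the Poisson structure on $\widehat{Z}_i$ is defined with. Point (b) is the actual role of the direct-summand hypothesis in the paper: it yields $Z_i \cap (\Ui^k \cdot A_i) = I_i^k$, hence injections $Z_i/I_i^k \hookrightarrow A_i/\Ui^k \cdot A_i$, which are invoked twice in the displayed chain of equalities. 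Your proposal uses the hypothesis for a different (and unestablished) purpose and nowhere supplies this comparison of filtrations, so the ``tautological'' compatibility rests on an identification that has not been proved; filling these holes essentially forces you back to the paper's componentwise computation.
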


\begin{proof}
Since $\theta(\mbf{t}_1) = \mbf{t}_2$, $\theta$ defines an isomorphism $\widehat{A}_1 \stackrel{\sim}{\longrightarrow} \widehat{A}_2$. This restricts to an isomorphism of the centres. By Lemma \ref{lem:centres}, $Z( \widehat{A}_i) =  \widehat{Z}_i$, and $\theta$ induces an isomorphism $\widehat{Z}_1 \stackrel{\sim}{\longrightarrow} \widehat{Z}_2$. Therefore we must show that $\theta$ is a Poisson morphism. Let $u,v \in \widehat{Z}_1$, $u = (u_i)_{i \ge 0}$ and $v = (v_i)_{i \ge 0}$ where $u_i,v_i \in Z_1 \, / \, I_1^i$ and choose lifts of $u,v$ to $\hat{u}$ and $\hat{v}$ in $\msf{A}_1$. The fact that $\theta$ induces an isomorphism $\widehat{Z}_1 \cong \widehat{Z}_2$ together with the fact that $\theta \circ \rho_1 = \rho_2 \circ \theta$ (since $\theta(\mbf{t}_1) = \mbf{t}_2$) imply that $\theta( \hat{u})$ is a lift of $\theta(u)$. The assumption that $Z_i$ is a direct summand of $A_i$ as a $Z_i$-module implies that $Z_i \cap \, (  \Ui^k \cdot A_i ) = I_i^k$ and hence
\bdm
Z_i \, / \, I^k_i \hookrightarrow A_i \, / \, \Ui^k \cdot A_i \quad \forall \, k \ge 0.
\edm
We recall the definition of the Poisson braket on $\widehat{Z}_i$ (combining Lemma \ref{lem:Poisson} and equation (\ref{eq:Poissonbraket})):
\bdm
(\{ u,v \})_i := \rho_1( [\hat{u}_{i+1},\hat{v}_{i+1}] / \mbf{t}_1 ) \, \mod \, I_1^i.
\edm
Now
\bdm
\begin{array}{rcl}
( \theta( \{ u,v \} ))_i & = & \theta( \rho_1( [\hat{u}_{i+1},\hat{v}_{i+1}] / \mbf{t}_1 ) \, \mod \, I_1^i) \\
 & = &  \theta( \rho_1( [\hat{u}_{i+1},\hat{v}_{i+1}] / \mbf{t}_1 ) \, \mod \, \Uone^i \cdot A_1) \\
 & = &  \theta( \rho_1( [\hat{u}_{i+1},\hat{v}_{i+1}] / \mbf{t}_1 ) ) \, \mod \, \Utwo^i \cdot A_2\\
 & = &  \rho_2 ( \theta( [\hat{u}_{i+1},\hat{v}_{i+1}]) / \mbf{t}_2 ) ) \, \mod \, \Utwo^i \cdot A_2\\
 & = &  \rho_2 ( [\theta( \hat{u}_{i+1}),\theta( \hat{v}_{i+1}) ] / \mbf{t}_2 ) ) \, \mod \, \Utwo^i \cdot A_2\\
 & = &  \rho_2 ( [\theta( \hat{u}_{i+1}),\theta( \hat{v}_{i+1}) ] / \mbf{t}_2 ) ) \, \mod \, I^i_2\\
 & = & ( \{ \theta(u) ,\theta(v) \} )_i,
\end{array}
\edm
where in the second and sixth line we have used the fact that $Z_i \, / \,  I^k_i \hookrightarrow A_i \, / \, \Ui^k \cdot A_i$, in the fourth line we use the fact that $\theta \circ \rho_1 = \rho_2 \circ \theta$ and in the final line we use the fact that $\theta(\hat{u})$ is a lift of $\theta(u)$ to $\msf{A}_2$.  
\end{proof}

\section{Completions of the generalised Calogero-Moser Space} 

\subsection{} In the remainder of the article we wish to consider rational Cherednik algebras associated to the same complex reflection group but with different reflection representations. Therefore, to avoid any ambiguities, we will write $H_{\mbf{c}}(W,\h)$, $Z_{\mbf{c}}(W,\h)$, $X_{\mbf{c}}(W,\h)$ and so on to keep track of this additional information. We can consider the rational Cherednik algebra $H_{\mbf{t},\mbf{c}}(W,\h)$, where $\mbf{t}$ is a central indeterminate. It is a $\C [\mbf{t}]$-algebra and there is a canonical isomorphism 
\bdm
\rho \, : \, H_{\mbf{t},\mbf{c}}(W,\h) / \mbf{t} \cdot H_{\mbf{t},\mbf{c}}(W,\h) \stackrel{\sim}{\rightarrow} H_{0,\mbf{c}}(W,\h).
\edm
Since the centre $Z_{\mbf{c}}(W,\h)$ of $H_{0,\mbf{c}}(W,\h)$ is an affine domain over which $H_{0,\mbf{c}}(W,\h)$ is a finite module we are in the situation described in (\ref{sec:Poissondefinition}). Hence $Z_{\mbf{c}}(W,\h)$ is a Poisson algebra. If $X_{\mbf{c}}(W,\h)$ is considered as a (non-smooth) complex analytic Poisson manifold then it is stratified by symplectic leaves, which are the maximal connected complex analytic submanifolds of $X_{\mbf{c}}(W,\h)$ on which the bracket $\{ -, - \}$ is nondegenerate. It was shown in \cite[Theorem 7.8]{Poisson Orders} that the symplectic leaves of $X_{\mbf{c}}(W,\h)$ are algebraic and there are only finitely many. Here algebraic means that the closure of a leaf $\mc{L}$ is an irreducible algebraic subset of $X_{\mbf{c}}(W,\h)$ and $\mc{L}$ is a Zariski open subset of its closure. In particular, the closure of $\mc{L}$ is defined by a Poisson primitive ideal.  

\subsection{} The polynomial ring $\C[\mathfrak{h}/W]$ is generated by the vector space of linear functionals $(\h / W)^*$. Let $b \in \mathfrak{h}$ and $\lambda \in (\h / W)^*$. We can evaluate $\lambda$ on the orbit $W \cdot b$, $b \mapsto \lambda(b)$. Let $\mathfrak{m}(b) := \{ \lambda - \lambda(b) \, | \, \lambda \in (\h / W)^* \}$. The ideal generated by $\mathfrak{m}(b)$ in $\C[\mathfrak{h}/W]$ is the maximal ideal corresponding to the orbit $W \cdot b \in \h / W$. Similarly, if $W_b$ is the stabilizer of $b$ in $W$, let $\mathfrak{n}(q) := \{ \lambda - \lambda(q) \, | \, \lambda \in (\h / W_b)^* \}$ for each $q \in \h$. As noted in \cite[Section 6]{Primitive}, we are in the setup of (\ref{sec:2algebras}) if we take $\msf{A}_1 = H_{\mbf{t},\mbf{c}}(W,\h)$, $\mf{n}_1 = \mf{m}(b)$, $\msf{A}'_2 = H_{\mbf{t},\mbf{c}'}(W_b,\h)$ and $\mf{n}'_2 = \mf{n}(0)$. Thus we get complete, assocaitive algebras
\bdm
\widehat{H}_{\mbf{t},\mathbf{c}}(W,\h)_b := \lim_{\infty \leftarrow k} H_{\mbf{t},\mathbf{c}}(W,\h) / \mf{m}(b)^k \cdot H_{\mbf{t},\mathbf{c}}(W,\h),
\edm
\bdm
\widehat{H}_{\mbf{t},\mathbf{c}'}(W_b,\h)_0 := \lim_{\infty \leftarrow k} H_{\mbf{t},\mathbf{c}'}(W_b,\h) / \mf{n}(0)^k \cdot H_{\mbf{t},\mathbf{c}'}(W_b,\h).
\edm
To get $\msf{A}_2$, $\mf{n}_2$ and $\theta$ we need to introduce a certain centralizer algebra.

\subsection{Centralizer algebras}\label{thm:BEiso}
We recall the centralizer construction described in \cite[3.2]{BE}. Let $A$ be a $\C$-algebra equipped with a homomorphism $H \longrightarrow A^{\times}$, where $H$ is a finite group. Let $G$ be another finite group such that $H$ is a subgroup of $G$. The algebra $C(G,H,A)$ is defined to be the centralizer of $A$ in the right $A$-module $P := \Fun_H(G,A)$ of $H$-invariant, $A$-valued functions on $G$. By making a choice of left coset representatives of $H$ in $G$, $C(G,H,A)$ is realized as the algebra of $|G/H|$ by $|G/H|$ matrices over $A$. Let $\msf{A}_2 = C(W,W_b,\widehat{H}_{\mbf{t},\mathbf{c}'}(W_b,\mathfrak{h})_0)$ and $\mf{n}_2 = C(W,W_b,\mf{n}(0))$.
  
\begin{thm}[\cite{BE}, Theorem 3.2]
Let $b \in \mathfrak{h}$, and define $\mathbf{c}'$ to be the restriction of $\mathbf{c}$ to the set $S_b$ of reflections in $W_b$. Then one has an isomorphism of $\C[\mbf{t}]$-algebras
\beq\label{eq:BEiso}
\theta : \widehat{H}_{\mbf{t},\mathbf{c}} (W,\mathfrak{h})_b \rightarrow C(W,W_b,\widehat{H}_{\mbf{t},\mathbf{c}'}(W_b,\mathfrak{h})_0),
\eeq
defined by the following formulas. Suppose that $f \in \textrm{Fun}_{W_b}(W,\widehat{H}_{t,\mathbf{c}'}(W_b,\mathfrak{h})_0)$. Then
\begin{displaymath}
(\theta(u)f)(w) = f(wu),u \in W;
\end{displaymath}
for any $\alpha \in \mathfrak{h}^*$,
\begin{displaymath}
(\theta(x_{\alpha})f)(w) = (x_{w\alpha}^{(b)} + (w\alpha,b))f(w),
\end{displaymath}
where $x_{\alpha} \in \mathfrak{h}^* \subset H_{t,\mathbf{c}}(W,\mathfrak{h}),x_{w\alpha}^{(b)} \in H_{t,\mathbf{c}'}(W_b,\mathfrak{h})$; and for any $a \in \mathfrak{h}$,
\begin{displaymath}
(\theta(y_a)f)(w) = y_{wa}^{(b)}f(w) + \sum_{s \in S:s \notin W_b} \frac{2c_s}{1 - \lambda_s} \frac{\alpha_s(wa)}{x_{\alpha_s}^{(b)} + \alpha_s(b)}(f(sw) - f(w)).
\end{displaymath}
where $y_a \in \mathfrak{h} \subset H_{t,\mathbf{c}}(W,\mathfrak{h})$ and $y_a^{(b)}$ the same vector considered now as an element of $H_{t,\mathbf{c}'}(W_b,\mathfrak{h})$.
\end{thm}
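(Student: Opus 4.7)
The plan is to define $\theta$ directly on the generators $W$, $\h^*$, $\h$ of $H_{\mbf{t},\mbf{c}}(W,\h)$ using the stated formulas, verify that the defining relations (\ref{eq:rel}) are preserved so that $\theta$ descends to a ring homomorphism out of $H_{\mbf{t},\mbf{c}}(W,\h)$, check that this homomorphism is continuous with respect to the relevant adic topologies and therefore extends to the completions, and finally establish bijectivity by a PBW/associated-graded argument.

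First one has to see that the formulas make sense inside the target algebra. For a reflection $s \notin W_b$ the point $b$ is not fixed by $s$, hence $\alpha_s(b) \neq 0$, so the element $x_{\alpha_s}^{(b)} + \alpha_s(b)$ has nonzero scalar part in $\widehat{H}_{\mbf{t},\mbf{c}'}(W_b,\h)_0$ and is invertible via the geometric series expansion of $(1 + x_{\alpha_s}^{(b)}/\alpha_s(b))^{-1}$ in the completion. Reflections $s \in W_b$, for which $\alpha_s(b) = 0$, contribute only through the $y_{wa}^{(b)}$ term and never appear under a denominator, so nothing is ill-defined. The bracket relations $[\theta(x_\alpha),\theta(x_\beta)] = 0$ hold because the $\theta(x_\bullet)$ act by multiplication by mutually commuting elements of $\h^*$ inside $H_{\mbf{t},\mbf{c}'}(W_b,\h)$; compatibility of $\theta$ with $W$ follows tautologically from the right-translation action on $\textrm{Fun}_{W_b}(W, \,\cdot\,)$. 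The main obstacle is the verification of the mixed relation $[\theta(x_\alpha),\theta(y_a)] = \mbf{t}(y_a,x_\alpha) - \sum_{s \in \mc{S}(W)} \mbf{c}(s) (y_a,\alpha_s)(\alpha_s^\vee, x_\alpha) \theta(s)$. Here I would split the global reflection sum into the pieces indexed by $s \in W_b$ and $s \notin W_b$. The first piece is supplied automatically by the Cherednik relation in $H_{\mbf{t},\mbf{c}'}(W_b,\h)$ applied to $x^{(b)}_{w\alpha}$ and $y^{(b)}_{wa}$. The second piece must emerge from commuting multiplication by $x_{w\alpha}^{(b)} + (w\alpha, b)$ past the Dunkl-type correction term; the matching of coefficients uses the identity $[x_{w\alpha}^{(b)}, (x_{\alpha_s}^{(b)} + \alpha_s(b))^{-1}] = 0$ in the commutative subalgebra $\C[\h^*] \subset H_{\mbf{t},\mbf{c}'}(W_b,\h)$ together with the $W_b$-equivariance condition on $f$ and a careful bookkeeping of the action of $s$ via the shift $f \mapsto f(sw)$.

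For continuity and isomorphism, one checks that the invariants $\lambda - \lambda(b) \in \mf{m}(b)$ are sent by $\theta$ into the ideal $C(W, W_b, \mf{n}(0))$ of the target, so $\theta(\mf{m}(b)^k \cdot H_{\mbf{t},\mbf{c}}(W,\h))$ lies in the corresponding power ideal, and $\theta$ therefore extends to the completions as in (\ref{sec:2algebras}). To conclude that $\theta$ is an isomorphism I would introduce the filtration in which $\h$ and $\h^*$ have degree one and $W$ has degree zero. The associated graded map is the evident completed centralizer isomorphism at the commutative level: it identifies the completion of $\C[\h \oplus \h^*] \rtimes W$ at the $W$-orbit of $(b,0)$ with $\textrm{Mat}_{|W/W_b|}$ of the completion of $\C[\h \oplus \h^*] \rtimes W_b$ at $(0,0)$, an isomorphism of smash products provided by the \'etale slice to the $W$-orbit through $b$ along $b + (\h^{*W_b})^\perp$. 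A filtered map whose associated graded is an isomorphism is itself an isomorphism after completion, which finishes the argument.
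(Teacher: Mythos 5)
This theorem is not proved in the paper at all: it is quoted verbatim from Bezrukavnikov--Etingof \cite{BE}, Theorem 3.2, and used as a black box, so there is no internal proof to compare yours against; the only ingredients the paper itself supplies in this direction are Lemma \ref{lem:auto} and Proposition \ref{prop:quoiso}, which record the compatibility of $\theta$ with the adic filtrations. Judged on its own terms, your outline follows the natural (and essentially the original) strategy: define $\theta$ on generators, verify the relations (\ref{eq:rel}), show $\theta(\mf{m}(b)^k\cdot H)\subseteq C(W,W_b,\mf{n}(0)^k\cdot\widehat{H})$ so that $\theta$ is continuous and passes to the completion, and prove bijectivity by a graded argument. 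Your preliminary observations are correct: $s\notin W_b$ forces $\alpha_s(b)\neq 0$, so $x_{\alpha_s}^{(b)}+\alpha_s(b)$ is invertible in the completion at $0$; the $[x,x]$ relation and the $W$-conjugation compatibility are immediate.

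Two substantive gaps remain. First, the verification of the commutation relations is the entire content of the theorem and you do not carry it out. For $[\theta(x_\alpha),\theta(y_a)]$ the key point, which your ``careful bookkeeping'' must actually produce, is that the affine function $\bigl(x^{(b)}_{w\alpha}+(w\alpha,b)\bigr)-\bigl(x^{(b)}_{sw\alpha}+(sw\alpha,b)\bigr)$ is an \emph{exact scalar multiple} of $x^{(b)}_{\alpha_s}+\alpha_s(b)$ (namely $\tfrac{1}{2}(1-\lambda_s)(\alpha_s^\vee,w\alpha)$ times it), so that the denominator cancels and the normalisation $2c_s/(1-\lambda_s)$ yields precisely the coefficient $\mbf{c}(s)(y_a,\alpha_s)(\alpha_s^\vee,x_\alpha)$ of $\theta(s)$ for $s\notin W_b$; without exhibiting this cancellation the relation is not checked. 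Moreover you never address $[\theta(y_a),\theta(y_{a'})]=0$, which for Dunkl-type operators is a genuinely nontrivial identity and cannot be omitted. Second, your bijectivity argument via the increasing filtration with $\deg\h=\deg\h^*=1$ does not literally make sense: $\theta(y_a)$ involves the full geometric series expansion of $(x^{(b)}_{\alpha_s}+\alpha_s(b))^{-1}$, which has no finite degree in that filtration, so there is no well-defined associated graded map there. The workable version uses the decreasing $\mf{m}(b)$-adic and $\mf{n}(0)$-adic filtrations, for which one needs the exact equality $\theta(\mf{m}(b)^k\cdot H)=C(W,W_b,\mf{n}(0)^k\cdot H)$ of Proposition \ref{prop:quoiso} (via Lemma \ref{lem:auto}), not merely an inclusion, and then identifies the graded pieces with the centralizer-algebra decomposition of $\C[\h\oplus\h^*]\rtimes W$ along the slice $b+(\h^{*W_b})^\perp$. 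Your \'etale-slice heuristic is the right picture, but it must be attached to the adic filtration rather than the PBW one.
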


\subsection{}\label{lem:auto} Choose homogeneous, algebraically independent generators $F_1, \dots F_n$ of $\C[\mathfrak{h}]^W$ and $P_1, \dots , P_n$ of $\C[\mathfrak{h}]^{W_b}$. 

\begin{lem}[Lemma 3.1, \cite{Factor}]
For each $b \in \mathfrak{h}$ the map $\Psi : \C[[\mathfrak{h}/W_b]]_0 \longrightarrow \C[[\mathfrak{h}/W_b]]_0$ defined by
\begin{displaymath}
P_i(\mathbf{x}) \mapsto F_i(\mathbf{x} + b) - F_i(b)
\end{displaymath}
is an automorphism.
\end{lem}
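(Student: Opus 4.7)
The plan is to realize $\Psi$, up to a formal change of coordinates, as the completion of the natural finite morphism of affine varieties $\bar{\pi} : \h/W_b \to \h/W$ defined by $[\xi] \mapsto [\xi + b]$, and to prove this completion is an isomorphism via a rank comparison. The dual ring map $\bar{\pi}^* : \C[\h]^W \to \C[\h]^{W_b}$ sends $F \mapsto F(\mathbf{x}+b)$, carries $\mathfrak{m}(b)$ into $\mathfrak{n}(0)$, and satisfies $\bar{\pi}^*(F_i - F_i(b)) = F_i(\mathbf{x}+b) - F_i(b)$. It fits into the commuting square
\bdm
\begin{CD}
\h @>\tau_b>> \h \\
@V{\pi_b}VV @VV{\pi}V \\
\h/W_b @>\bar{\pi}>> \h/W
\end{CD}
\edm
where $\tau_b(\xi) = \xi+b$ and $\pi, \pi_b$ are the quotient maps; completing each corner at the appropriate maximal ideal yields
\bdm
\begin{CD}
\widehat{\C[\h/W]}_{\mathfrak{m}(b)} @>\widehat{\bar{\pi}^*}>> \C[[\h/W_b]]_0 \\
@V{\widehat{\pi^*}}VV @VV{\widehat{\pi_b^*}}V \\
\widehat{\C[\h]}_{\mathfrak{m}_b} @>\widehat{\tau_b^*}>> \widehat{\C[\h]}_{\mathfrak{m}_0}
\end{CD}
\edm
in which $\widehat{\tau_b^*}$ is an isomorphism (translation of $\h$).

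By Chevalley--Shephard--Todd, $\C[\h]$ is free of rank $|W|$ over $\C[\h]^W$ and of rank $|W_b|$ over $\C[\h]^{W_b}$. Completing $\C[\h]$ along the orbit-ideal $\mathfrak{m}(b)\C[\h]$ decomposes it as $\prod_{c \in Wb} \widehat{\C[\h]}_{\mathfrak{m}_c}$; by $W$-transitivity on the orbit $Wb$ all factors are isomorphic as $\widehat{\C[\h/W]}_{\mathfrak{m}(b)}$-modules, each of rank $|W|/|W/W_b| = |W_b|$. In particular the left vertical $\widehat{\pi^*}$ is finite flat of rank $|W_b|$; similarly, since $W_b \cdot 0 = \{0\}$, the right vertical $\widehat{\pi_b^*}$ is finite flat of rank $|W_b|$. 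The rank of $\widehat{\C[\h]}_{\mathfrak{m}_0}$ over $\widehat{\C[\h/W]}_{\mathfrak{m}(b)}$ is therefore $|W_b|$ computed via the left-bottom path and $(\rank \widehat{\bar{\pi}^*}) \cdot |W_b|$ computed via the top-right path, forcing $\rank \widehat{\bar{\pi}^*} = 1$. A finite flat extension of rank $1$ between local rings is an isomorphism, so $\widehat{\bar{\pi}^*}$ is itself an iso.

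To finish, define $\alpha : \C[[\h/W_b]]_0 \to \widehat{\C[\h/W]}_{\mathfrak{m}(b)}$ by $P_j \mapsto F_j - F_j(b)$. Both sides are formal power series rings in $n$ variables, and each of $\{P_j\}$ and $\{F_j - F_j(b)\}$ is an algebraically independent generating set for the respective maximal ideal, so $\alpha$ is an isomorphism. By construction $\Psi = \widehat{\bar{\pi}^*} \circ \alpha$, hence $\Psi$ is an isomorphism. The main subtlety will be the rank computation on the vertical arrows: completing $\C[\h]$ along the orbit-ideal $\mathfrak{m}(b)\C[\h]$ and subsequently projecting to the local factor at $b$ must yield rank $|W_b|$ (the stabilizer size) rather than $|W|$, which is precisely what balances the ranks around the square; this relies on the product decomposition of the completion along an ideal whose zero locus is a finite set, together with $W$-transitivity on $Wb$.
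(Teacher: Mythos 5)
The paper does not actually prove this statement: it is imported verbatim from \cite[Lemma 3.1]{Factor}, so there is no internal argument to compare yours against. Taken on its own terms, your proof is correct in structure and the central idea is the right one: realize $\Psi$ as the translation-twisted completion of $\h/W_b \to \h/W$ at $0 \mapsto [b]$, and detect that this is a local isomorphism by comparing the two Chevalley--Shephard--Todd rank counts on the completion of $\C[\h]$. The decomposition of $\C[\h] \otimes_{\C[\h]^W} \widehat{\C[\h/W]}_{\mathfrak{m}(b)}$ as a product over the orbit $Wb$, the use of $W$-transitivity to see that each local factor has rank $|W|/|W/W_b| = |W_b|$, and the identification $\alpha$ of $\C[[\h/W_b]]_0$ with $\widehat{\C[\h/W]}_{\mathfrak{m}(b)}$ are all sound (Steinberg's theorem guarantees $W_b$ is again a reflection group, so $\C[\h]$ is also free of rank $|W_b|$ over $\C[\h]^{W_b}$).

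The one step you should tighten is the division of ranks around the square. Multiplicativity of rank in the tower $A := \widehat{\C[\h/W]}_{\mathfrak{m}(b)} \to B := \C[[\h/W_b]]_0 \to C := \widehat{\C[\h]}_{\mathfrak{m}_0}$ presupposes that $B$ is finite \emph{flat} over $A$; you use the phrase ``finite flat of rank $1$'' but never establish flatness. It is true ($A \to B$ is a finite local homomorphism of regular local rings of the same dimension $n$, so miracle flatness applies), but it is cleaner to bypass it: reducing modulo $\mathfrak{m}_A$ and using only the freeness of $C$ over $A$ and over $B$ gives $|W_b| = \dim_{\C} C/\mathfrak{m}_A C = |W_b| \cdot \dim_{\C} (B/\mathfrak{m}_A B)$, hence $\dim_{\C} B/\mathfrak{m}_A B = 1$, and Nakayama (applied to the finite $A$-module $B$) yields surjectivity of $A \to B$; injectivity is automatic because $C$ is a faithful (indeed free of positive rank) $A$-module and the $A$-action factors through $B$. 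With that patch the argument is complete.
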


\begin{prop}\label{prop:quoiso}
Let $\theta : \widehat{H}_{t,\mathbf{c}} (W,\mathfrak{h})_b \rightarrow C(W,W_b,\widehat{H}_{t,\mathbf{c}'}(W_b,\mathfrak{h})_0)$ be the isomorphism (\ref{eq:BEiso}). Then 
\bdm
\theta( \mathfrak{m}(b)^k \cdot H_{t,\mathbf{c}}(W,\mathfrak{h}) ) = C \left( W,W_b, \mathfrak{n}(0) ^k \cdot H_{t,\mathbf{c}'}(W_b,\mathfrak{h}) \right), \qquad \forall k \ge 1. 
\edm
\end{prop}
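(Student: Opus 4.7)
The plan is to prove the $k=1$ case directly and then bootstrap to arbitrary $k$ by taking powers of ideals. The two essential inputs are the explicit formulas of Theorem \ref{thm:BEiso} for $\theta$ on $\h^*$-generators, and the automorphism property from Lemma \ref{lem:auto}. The reduction to $k=1$ proceeds as follows: once the $k=1$ equality is established, the general case follows from multiplicativity of $\theta$ on products of ideals, together with the matrix-algebra identity $C(W,W_b,J)^k = C(W,W_b,J^k)$ for any two-sided ideal $J$ of $\widehat{H}_{t,\mbf{c}'}(W_b,\h)_0$ (which in turn uses that $\mf{n}(0)^k \cdot H_{t,\mbf{c}'}(W_b,\h)$ is generated by central elements of $\C[\h]^{W_b}$ at $t=0$, so that $(\mf{n}(0)\cdot H_{t,\mbf{c}'}(W_b,\h))^k = \mf{n}(0)^k \cdot H_{t,\mbf{c}'}(W_b,\h)$).

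For the $k=1$ case, I would first compute $\theta$ on $F \in \C[\h]^W \subset H_{t,\mbf{c}}(W,\h)$. Applying $\theta$ multiplicatively to an expression of $F$ as a polynomial in a basis $\{x_j\}$ of $\h^*$ and using $(\theta(x_\alpha)f)(w) = (x_{w\alpha}^{(b)} + (w\alpha,b))f(w)$, the $w$-th diagonal entry of $\theta(F)$ equals $F$ evaluated at the $w$-twisted coordinates shifted by $b$; the $W$-invariance of $F$ collapses this to the $w$-independent expression $F(x^{(b)} + b)$. Hence $\theta(F) = F(x^{(b)}+b) \cdot \msf{Id}$ is a scalar matrix in the centralizer, and for $F = F_i$ the scalar entry $F_i(x^{(b)}+b) - F_i(b)$ is a $W_b$-invariant polynomial in $x^{(b)}$ with zero constant term, so it lies in the augmentation ideal $\mf{n}(0) \cdot \C[\h]^{W_b}$. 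This delivers $\theta(\mf{m}(b)) \subseteq C(W, W_b, \mf{n}(0) \cdot H_{t,\mbf{c}'}(W_b,\h))$ through scalar matrices, and multiplying on either side by $\theta(H_{t,\mbf{c}}(W,\h))$ (using that these scalar entries are central) gives the $\subseteq$ direction. For the reverse $\supseteq$ inclusion, Lemma \ref{lem:auto} is the key: since $\Psi$ is an automorphism of $\C[[\h/W_b]]_0$, the elements $\{F_i(\mbf{x}+b) - F_i(b)\}_{i=1}^n$ generate the same maximal ideal in $\C[[\h/W_b]]_0$ as the Chevalley generators $\{P_j\}_{j=1}^n$, namely $\mf{n}(0) \cdot \C[[\h/W_b]]_0$. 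Consequently $\theta(\mf{m}(b))$ and $\mf{n}(0)$ generate the same ideal in $\widehat{H}_{t,\mbf{c}'}(W_b,\h)_0$, and since $\theta$ maps onto the full centralizer $C(W, W_b, \widehat{H}_{t,\mbf{c}'}(W_b,\h)_0)$, the two-sided ideals they generate in the centralizer agree with $C(W, W_b, \mf{n}(0)\cdot \widehat{H}_{t,\mbf{c}'}(W_b,\h)_0)$.

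The main obstacle is the matrix-ideal bookkeeping inside the centralizer algebra: one must verify that the two-sided ideal of $C(W, W_b, \widehat{H}_{t,\mbf{c}'}(W_b,\h)_0)$ generated by a set of scalar matrices with central entries in some subset $S$ coincides with the matrix ideal $C(W, W_b, S \cdot \widehat{H}_{t,\mbf{c}'}(W_b,\h)_0)$. A related technical point is that the identification of both sides is most naturally interpreted inside the completed centralizer, since $\theta$ is only an isomorphism after completion; one has to check that the statement as written for the non-completed ideals survives this passage, which it does because the maximal ideal relation in $\C[[\h/W_b]]_0$ descends via the finite generation of $\mf{m}(b)$ and $\mf{n}(0)$ to the original rings.
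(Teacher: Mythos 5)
Your $k=1$ argument is exactly the paper's: compute $\theta$ on $\C[\h]^W$ from the formula $(\theta(x_\alpha)f)(w) = (x^{(b)}_{w\alpha} + (w\alpha,b))f(w)$, use $W$-invariance of $F_i$ to see that $\theta(F_i(\mathbf{x})-F_i(b))$ acts as the scalar $F_i(\mathbf{x}+b)-F_i(b)$, and invoke Lemma \ref{lem:auto} for the reverse inclusion. The genuine gap is in your reduction of general $k$ to $k=1$. The identities $(\mathfrak{m}(b)\cdot H_{t,\mathbf{c}}(W,\h))^k = \mathfrak{m}(b)^k\cdot H_{t,\mathbf{c}}(W,\h)$ and $(\mathfrak{n}(0)\cdot H_{t,\mathbf{c}'}(W_b,\h))^k = \mathfrak{n}(0)^k\cdot H_{t,\mathbf{c}'}(W_b,\h)$ that your bootstrap needs rest on the centrality of $\C[\h]^W$ (resp.\ $\C[\h]^{W_b}$), and you yourself restrict to $t=0$ to justify them. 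But the proposition is stated, and is actually needed, with $t=\mathbf{t}$ a central indeterminate: it feeds into Theorem \ref{thm:completeiso} via Proposition \ref{prop:poissoniso}, whose hypothesis $\theta(\,\mathcal{U}_{1,+}^k\cdot\widehat{\mathsf{A}}_1)=\mathcal{U}_{2,+}^k\cdot\widehat{\mathsf{A}}_2$ concerns the algebras $\mathsf{A}_1=H_{\mathbf{t},\mathbf{c}}(W,\h)$ \emph{before} setting $\mathbf{t}=0$ (this is precisely what produces the Poisson bracket). There $\C[\h]^W$ is not central: for $W$-invariant $F$ the commutator $[y_a,F]$ equals $t\,\partial_a F$ up to sign, so for instance $m_1y_am_2h=m_1m_2y_ah\pm m_1(t\,\partial_a m_2)h$ shows that $\mathfrak{m}(b)H\mathfrak{m}(b)H$ contains elements of $\mathfrak{m}(b)\cdot H$ that are not in $\mathfrak{m}(b)^2\cdot H$, since $\partial_a m_2$ need not vanish at $b$. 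The two filtrations are cofinal by (\ref{eq:nilpotent}), but the proposition asserts equality for each individual $k$, which is what Proposition \ref{prop:poissoniso} consumes.

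The repair is cheap and is what the paper does: drop the bootstrap and run your $k=1$ argument for all $k$ simultaneously. The restriction of $\theta$ to $\C[\h]^W$ is the ring homomorphism $F\mapsto F(\mathbf{x}+b)$ into the commutative ring $\C[[\h/W_b]]_0$, hence multiplicative, so it carries $\mathfrak{m}(b)^k$ into $\mathfrak{n}(0)^k\cdot\C[[\h/W_b]]_0$ directly; and Lemma \ref{lem:auto} upgrades the generation statement $\{F_1(\mathbf{x}+b)-F_1(b),\dots,F_n(\mathbf{x}+b)-F_n(b)\}\cdot\C[[\h/W_b]]_0=\mathfrak{n}(0)\cdot\C[[\h/W_b]]_0$ to its $k$-th power without leaving the commutative world. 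Everything then stays inside one-sided ideals of the form $(\text{commutative ideal})\cdot(\text{algebra})$, and no commutation of ideal factors in the noncommutative algebra is ever required. Your matrix-bookkeeping concerns are likewise best avoided by keeping the ideals one-sided throughout rather than passing through two-sided ideals generated by scalar matrices.
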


\begin{proof}
This is a modification of the proof of \cite[Corollary 3.2]{Factor}, which is the above result in the special case $k = 1$. It is shown in the proof of \textit{loc. cit.} that if $g \in \mathfrak{m}(b)^k \lhd \C[\mathfrak{h}]^W$, then $g(\mathbf{x} + b) \in \mathfrak{n}(0)^k \lhd \C[\mathfrak{h}]^{W_b}$. This shows that $\theta(g)f(w) \in \mathfrak{n}(0)^k \cdot \widehat{H}_{t,\mathbf{c}'}(W_b)_0$ and 
\begin{equation}\label{eq:inc}
\theta(\mathfrak{m}(b)^k \cdot \widehat{H}_{t,\mathbf{c}} (W,\mathfrak{h})_b) \subseteq C(W,W_b,\mathfrak{n}(0)^k \cdot \widehat{H}_{t,\mathbf{c}'}(W_b,\mathfrak{h})_0).
\end{equation}
The ideal $\mathfrak{m}(b)$ in $\C[\mathfrak{h}]^W$ is generated by $F_1(\mathbf{x}) - F_1(b), \dots , F_n(\mathbf{x}) - F_n(b)$ and we have $\theta(F_i(\mathbf{x}) - F_i(b))f(w) = (F_i(\mathbf{x} + b) - F_i(b))f(w)$. The statement of Lemma \ref{lem:auto} is equivalent to the fact that 
\begin{displaymath}
\{ F_1(\mathbf{x} + b) - F_1(b), \dots , F_n(\mathbf{x} + b) - F_n(b)) \} \cdot \C[[\mathfrak{h}/W_b]]_0 = \mathfrak{n}(0) \cdot \C[[\mathfrak{h}/W_b]]_0,
\end{displaymath}
which in turn implies that 
\bdm
\{ F_1(\mathbf{x} + b) - F_1(b), \dots , F_n(\mathbf{x} + b) - F_n(b)) \}^k \cdot \C[[\mathfrak{h}/W_b]]_0 = \mathfrak{n}(0)^k \cdot \C[[\mathfrak{h}/W_b]]_0.
\edm
This, together with (\ref{eq:inc}), implies that 
\begin{displaymath}
\theta(\mathfrak{m}(b)^k \cdot \widehat{H}_{t,\mathbf{c}} (W,\mathfrak{h})_b) = C(W,W_b,\mathfrak{n}(0)^k \cdot \widehat{H}_{t,\mathbf{c}'}(W_b,\mathfrak{h})_0).
\end{displaymath}
\end{proof}

\subsection{}\label{thm:completeiso} Let us denote by $\widehat{Z}_{\mbf{c}}(W,\h)$ the completion of $Z_{\mbf{c}}(W,\h)$ with respect to the ideal generated by $\mf{m}(b)$. Similarly, let $\widehat{Z}_{\mbf{c}'}(W_b,\h)_0$ be the completion of $Z_{\mbf{c}'}(W_b,\h)$ with respect to the ideal generated by $\mf{n}(0)$. Lemma \ref{lem:centres} says that
\bdm
Z(\widehat{H}_{0,\mathbf{c}} (W,\mathfrak{h})_b) = \widehat{Z}_{\mbf{c}}(W,\h) \quad \textrm{ and } \quad Z(C(W,W_b,\widehat{H}_{0,\mathbf{c}'}(W_b,\mathfrak{h})_0)) = \widehat{Z}_{\mbf{c}'}(W_b,\h)_0.
\edm

\begin{lem}
The centre $Z_{\mbf{c}}(W,\h)$ of $H_{\mbf{c}}(\h,W)$ is a direct summand of $H_{\mbf{c}}(\h,W)$ when considered as a $Z_{\mbf{c}}(W,\h)$-module.
\end{lem}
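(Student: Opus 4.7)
The plan is to exhibit an explicit $Z_{\mbf{c}}(W,\h)$-linear retraction of the inclusion $Z_{\mbf{c}}(W,\h) \hookrightarrow H_{\mbf{c}}(W,\h)$; any such map $\pi$ produces a $Z_{\mbf{c}}(W,\h)$-module decomposition $H_{\mbf{c}}(W,\h) = Z_{\mbf{c}}(W,\h) \oplus \Ker \pi$, which is precisely what the lemma requires.

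The retraction would be built from the trivial idempotent $\eW := |W|^{-1} \sum_{w \in W} w \in \C W \subset H_{\mbf{c}}(W,\h)$. The key input is the Satake isomorphism of Etingof--Ginzburg \cite{1}, which asserts that at $t = 0$ the map
\[
\phi : Z_{\mbf{c}}(W,\h) \longrightarrow \eW H_{\mbf{c}}(W,\h) \eW, \qquad z \longmapsto z \eW,
\]
is an isomorphism of algebras. Since every $z \in Z_{\mbf{c}}(W,\h)$ is central in $H_{\mbf{c}}(W,\h)$, $\phi$ is automatically $Z_{\mbf{c}}(W,\h)$-linear for the left multiplication action on $\eW H_{\mbf{c}}(W,\h) \eW$; for the same reason, $h \mapsto \eW h \eW$ defines a $Z_{\mbf{c}}(W,\h)$-linear map $H_{\mbf{c}}(W,\h) \to \eW H_{\mbf{c}}(W,\h) \eW$.

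I would then set
\[
\pi : H_{\mbf{c}}(W,\h) \longrightarrow Z_{\mbf{c}}(W,\h), \qquad \pi(h) := \phi^{-1}(\eW h \eW),
\]
which is $Z_{\mbf{c}}(W,\h)$-linear as the composite of two such maps. To check that $\pi$ splits the inclusion, one computes for $z \in Z_{\mbf{c}}(W,\h)$ that $\eW z \eW = z \eW^2 = z \eW = \phi(z)$, whence $\pi(z) = \phi^{-1}(\phi(z)) = z$. This yields $H_{\mbf{c}}(W,\h) = Z_{\mbf{c}}(W,\h) \oplus \Ker \pi$ as $Z_{\mbf{c}}(W,\h)$-modules.

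The only non-formal ingredient is the Satake isomorphism $\phi$, which I am content to quote from Etingof--Ginzburg; the rest of the argument is a routine verification using only the centrality of $Z_{\mbf{c}}(W,\h)$ and the idempotent identity $\eW^2 = \eW$. I expect invoking the Satake isomorphism to be the one substantive step, but since it is a well-established result for rational Cherednik algebras at $t = 0$ it should not present any genuine difficulty.
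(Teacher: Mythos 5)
Your proof is correct, and it takes a genuinely different route from the paper. You build an explicit $Z_{\mbf{c}}(W,\h)$-linear retraction out of the symmetrizing idempotent $\eW$ and the Satake isomorphism $z \mapsto z\eW$ of Etingof--Ginzburg (which does hold at $t=0$ for all parameters $\mbf{c}$ and all complex reflection groups), and the verification that $\pi(z) = z$ and that $\pi$ is $Z_{\mbf{c}}(W,\h)$-linear is exactly as routine as you say. The paper instead proves first that $Z_{\mbf{c}}(W,\h)$ is integrally closed --- by showing the associated graded algebra $\C[\h\oplus\h^*]\rtimes W$ is a maximal order, lifting that property through the filtration to $H_{\mbf{c}}(W,\h)$, and quoting that centres of maximal orders are integrally closed --- and then invokes a general claim: any prime $\C$-algebra finite over an integrally closed centre $Z$ admits a reduced trace map $\frac{1}{n}tr$ splitting the inclusion $Z\hookrightarrow A$. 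What your argument buys is brevity and the avoidance of the maximal-order machinery entirely; what the paper's argument buys is generality (the reduced-trace claim applies to any prime algebra finite over an integrally closed centre, with no idempotent or Satake-type isomorphism available) and, as a by-product, the fact that $Z_{\mbf{c}}(W,\h)$ is integrally closed, which is of independent interest. Either proof is acceptable here, since the lemma is only used to verify the hypothesis of Proposition \ref{prop:poissoniso}.
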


\begin{proof}
First, let us show that $Z_{\mbf{c}}(W,\h)$ is integrally closed. The (Zariski closed) set of points where the group $W$ does not act freely on $\h \times \h^*$ has codimension at least two. Then \cite[Theorem 4.6]{skewmaximal} says that the skew group ring $\C[\h \oplus \h^*] \rtimes W$ is a maximal order. The algebra $H_{\mbf{c}}(\h,W)$ is $\N$-filtered and $\C[\h \oplus \h^*] \rtimes W$ is its associated graded. Now \cite[Theorem 5]{filteredmaxorder} shows that the property of being a maximal order lifts to $H_{\mbf{c}}(\h,W)$. The centre of a maximal order is integrally closed, see \cite[Proposition 5.1.10]{MR}.\\
The statement of the Lemma now follows from:\\
 \noindent \textbf{Claim} Let $A$ be a prime $\C$-algebra, finite over its centre $Z$ that is integrally closed. Then $Z$ is a direct summand of $A$ as a $Z$-module.\\
\textit{Proof of claim:} Since $A$ is prime, the centre $Z$ is a domain. Let $Q(Z)$ be the field of fractions of $Z$ and $D = A \otimes_Z Q(Z)$, a central simple algebra. If $\overline{Q(Z)}$ is the algebraic closure of $Q(Z)$ then 
\begin{equation}\label{eq:matrixiso}
A \otimes_Z \overline{Q(Z)} = D \otimes_{Q(Z)} \overline{Q(Z)} \simeq \textrm{Mat}_n(\overline{Q(Z)}), \textrm{ for some $n$}.
\end{equation}
Therefore we have a trace map $tr : A \otimes_Z \overline{Q(Z)} \rightarrow \overline{Q(Z)}$. It is shown in \cite[page 38]{CentralSimpleAlgebras} that one can choose the isomorphism (\ref{eq:matrixiso}) so that $tr_| : D \rightarrow Q(Z)$. Now choose $a \in A$. Since $A$ is a finite module over $Z$ there exists a monic polynomial $f \in Z[x]$ such that $f(a) = 0$. Let $g \in \overline{Q(Z)}[x]$ be the minimal polynomial of $a$, considered as an element of $\textrm{Mat}_n(\overline{Q(Z)})$ and let the roots of $g$ be $\alpha_1, \ds, \alpha_k$. Since $g \, | \, f$ in $\overline{Q(Z)}[x]$, $f(\alpha_i) = 0$ for all roots $\alpha_i$ of $g$. Therefore the algebra $B := Z[ \alpha_1, \ds, \alpha_k]$ is a finite $Z$-module. The coefficients of $g$ belong to $B$. In particular, $tr(a) \in Q(Z) \cap B = Z$ since $Z$ is assumed to be integrally closed. The restriction of $tr$ to $Z$ is just multiplication by $n$. Therefore the $Z$-module morphism $\frac{1}{n}tr$ is a left inverse to the inclusion $Z \hookrightarrow A$ and hence $Z$ is a direct summand of $A$.
\end{proof}

\begin{thm}
Fix $b$ an element of $\mathfrak{h}$ and let $\mathbf{c}'$ be the restriction of $\mathbf{c}$ to the subgroup $W_b$ of $W$. There is a Poisson isomorphism 
\bdm
\theta \, : \, \widehat{Z}_{\mathbf{c}}(W,\mathfrak{h})_b \stackrel{\sim}{\longrightarrow} \widehat{Z}_{\mathbf{c}'}(W_b,\mathfrak{h})_0.
\edm 
\end{thm}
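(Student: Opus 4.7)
The plan is to apply Proposition \ref{prop:poissoniso} in the framework of (\ref{sec:2algebras}), with the data $\msf{A}_1 = H_{\mbf{t},\mbf{c}}(W,\h)$, $\mbf{t}_1=\mbf{t}$, $\mf{n}_1=\mf{m}(b)$, and $\msf{A}_2 = C(W,W_b,H_{\mbf{t},\mbf{c}'}(W_b,\h))$, $\mbf{t}_2=\mbf{t}$, $\mf{n}_2=C(W,W_b,\mf{n}(0))$. Reducing modulo $\mbf{t}$ gives $A_1 = H_{0,\mbf{c}}(W,\h)$ and $A_2 = C(W,W_b,H_{0,\mbf{c}'}(W_b,\h))$, which after choosing coset representatives of $W_b$ in $W$ is identified with $\mathrm{Mat}_{|W/W_b|}(H_{0,\mbf{c}'}(W_b,\h))$. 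The setup of (\ref{sec:2algebras}) has been arranged in the paragraph preceding Theorem \ref{thm:BEiso}, so in particular the images of $\mf{m}(b)$ and $\mf{n}(0)$ in the classical algebras land in the centres $Z_{\mbf{c}}(W,\h)$ and $Z_{\mbf{c}'}(W_b,\h)$ respectively.

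The first step is to produce the topological isomorphism required by Proposition \ref{prop:poissoniso}. Theorem \ref{thm:BEiso} supplies the $\C[\mbf{t}]$-algebra isomorphism $\theta : \widehat{H}_{\mbf{t},\mbf{c}}(W,\h)_b \stackrel{\sim}{\to} C(W,W_b,\widehat{H}_{\mbf{t},\mbf{c}'}(W_b,\h)_0)$ with $\theta(\mbf{t}_1)=\mbf{t}_2$, and Proposition \ref{prop:quoiso} (which applies verbatim with the indeterminate $\mbf{t}$ in place of the scalar $t$, since its proof only manipulates the commutative subalgebras $\C[\h]^W$ and $\C[\h]^{W_b}$) shows that $\theta(\Uone^k \cdot \widehat{\msf{A}}_1) = \Utwo^k\cdot \widehat{\msf{A}}_2$ for all $k\ge 0$. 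This gives the homeomorphism hypothesis of Proposition \ref{prop:poissoniso}.

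The second step is to verify the direct-summand assumption. For $A_1$ this is precisely the Lemma established immediately above. For $A_2$, the same Lemma applied to $W_b$ acting on $\h$ shows that $Z_{\mbf{c}'}(W_b,\h)$ is a direct summand of $H_{0,\mbf{c}'}(W_b,\h)$ as a $Z_{\mbf{c}'}(W_b,\h)$-module; passing to $|W/W_b|\times|W/W_b|$ matrices preserves this, and realising $Z(A_2) \cong Z_{\mbf{c}'}(W_b,\h)$ as scalar matrices in $A_2$ shows it remains a direct summand.

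With both hypotheses in hand, Proposition \ref{prop:poissoniso} yields a Poisson isomorphism $\theta : \widehat{Z}_1 \stackrel{\sim}{\to} \widehat{Z}_2$. The identifications $\widehat{Z}_1 = \widehat{Z}_{\mbf{c}}(W,\h)_b$ and $\widehat{Z}_2 = \widehat{Z}_{\mbf{c}'}(W_b,\h)_0$ recorded explicitly in (\ref{thm:completeiso}) just above the theorem then complete the proof. The only subtle point is ensuring the centralizer side $A_2$ satisfies the direct-summand hypothesis; this is not a genuine obstacle once $C(W,W_b,\cdot)$ is recognised as a matrix algebra, but it is the one place the argument is not a pure formality.
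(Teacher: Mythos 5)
Your proposal is correct and follows essentially the same route as the paper: the paper's proof likewise consists of invoking Proposition \ref{prop:poissoniso} once its hypotheses are verified via the Bezrukavnikov--Etingof isomorphism of Theorem \ref{thm:BEiso}, the compatibility of ideals in Proposition \ref{prop:quoiso}, and the direct-summand Lemma of (\ref{thm:completeiso}). Your extra care in checking the direct-summand hypothesis on the centralizer side (by identifying $C(W,W_b,\cdot)$ with a matrix algebra and the centre with scalar matrices) fills in a detail the paper leaves implicit, but does not change the argument.
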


\begin{proof}
Lemma \ref{thm:completeiso} and Proposition \ref{prop:quoiso} show that the assumptions of (\ref{prop:poissoniso}) hold. Therefore the theorem follows from Proposition \ref{prop:poissoniso}.
\end{proof} 

\begin{remark}\label{remark:either}
In Theorem \ref{thm:completeiso} it is possible to choose a point $\lambda \in \mathfrak{h}^*/W$ instead of $b \in \mf{h}/W$; the analogous statement holds.
\end{remark}

\subsection{} Let us fix $\mf{t} := (\h^{* \, W_b})^\perp \subset \h$ and $\mf{s} := \h^{W_b}$ so that $\h = \mf{t} \oplus \mf{s}$. The defining relations of $H_{t,\mathbf{c}}$ show that 
\begin{equation}\label{eq:Hiso}
H_{t,\mathbf{c}}(W_b,\mathfrak{h}) \simeq H_{t,\mathbf{c}}(W_b,\mathfrak{t}) \otimes \mathcal{D}_t(\mf{s}).
\end{equation}
Here, for a given vector space $V$, $\mathcal{D}_t(V)$ is the $\C$-algebra generated by $V$ and $V^*$: the elements of $V$ commuting amongst themselves and similarly for the elements of $V^*$, whilst $[ x, y] = t \cdot x(y)$ for $y \in V$ and $x \in V^*$. Thus, when $t \neq 0$, $\mathcal{D}_t(V)$ is isomorphic to the Weyl algebra over $V$ and when $t = 0$, $\mathcal{D}_t(V) = \C [ V \times V^*]$. However $\C [ V \times V^*]$ inherits a nondegenerate Poisson structure from $\mathcal{D}_t(V)$ given by $\{ x , x' \} = \{ y , y' \} = 0$ and $\{ x , y \} = x(y)$, for $x,x' \in V^*$ and $y,y' \in V$ (which is a particular case of the construction given in (\ref{sec:Poissondefinition})). Equivalently, $V \times V^*$ is a symplectic manifold with the canonical symplectic structure. The isomorphism (\ref{eq:Hiso}) restricts to an isomorphism of the centres. Moreover, since (\ref{eq:Hiso}) is valid for all $t$, the isomorphism of centres is a Poisson isomorphism when $t = 0$. If $\widehat{\C} [\mf{s} \times \mf{s}^*]_0$ is the completion of the polynomial ring $\C [\mf{s} \times \mf{s}^*]$ with respect to the ideal generated by $\C [\mf{s}]_+$ then there is an isomorphism of Poisson algebras 
\beq\label{eq:decompose}
Z_{\mathbf{c}'}(W_b,\mathfrak{h}) \simeq Z_{\mathbf{c}'}(W_b,\mathfrak{t}) \otimes \C [\mf{s} \times \mf{s}^*],
\eeq
which extends to an isomorphism of complete Poisson algebras
\bdm
\widehat{Z}_{\mathbf{c}'}(W_b,\mathfrak{h})_0 \simeq \widehat{Z}_{\mathbf{c}'}(W_b,\mathfrak{t})_0 \, \widehat{\otimes} \, \widehat{\C} [\mf{s} \times \mf{s}^*]_0.
\edm

\subsection{}\label{lem:parabolicdim} Fix a parabolic subgroup $W_b$ of $W$ and let $(\mathfrak{h}^{W_b})_{\textrm{reg}}$ be the set of points in $\h$ whose stablizer is $W_b$. The images of $\mathfrak{h}^{W_b}$ and $(\mathfrak{h}^{W_b})_{\textrm{reg}}$ in $\mathfrak{h}/W$ will be written $\mathfrak{h}^{(W_b)}/W$ and $\mathfrak{h}^{(W_b)}_{\textrm{reg}}/W$ respectively. They only depend on the conjugacy class of $W_b$. The sets $\mathfrak{h}^{(W_b)}_{\textrm{reg}}/W$ define a finite stratification of $\mathfrak{h}/W$ by locally closed subsets. Moreover, the closure ordering that this stratification defines agrees with the partial ordering on conjugacy classes of parabolic subgroups defined in (\ref{sec:para}) i.e.
\begin{displaymath}
(W_1) \ge (W_2) \quad \Longleftrightarrow \quad \mathfrak{h}^{(W_2)}_{\textrm{reg}} \, / \, W \subseteq \overline{\mathfrak{h}^{(W_1)}_{\textrm{reg}} \, / \, W}.
\end{displaymath}

\begin{lem}
Let $(W_b)$ be a conjugacy class of parabolic subgroups of $W$ of rank $r$, then 
\begin{displaymath}
\dim \mathfrak{h}^{(W_b)}_{\textrm{reg}} \, / \, W = n - r. 
\end{displaymath}
\end{lem}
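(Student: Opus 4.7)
The plan is to reduce everything to computing the dimension of $\h^{W_b}$ itself, and then to argue that the subsequent operations (passing to the open subset of points with stabilizer exactly $W_b$, and then quotienting by $W$) preserve dimension.

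First I would use the direct sum decomposition $\h = \h^{W_b} \oplus (\h^{*W_b})^\perp$ recorded in Section \ref{sec:para}. Since the rank of $W_b$ is defined as $\dim(\h^{*W_b})^\perp$, which equals $r$ by hypothesis, we immediately obtain $\dim \h^{W_b} = n - r$. Next, because $\h^{W_b}_{\textrm{reg}}$ is by definition the complement inside $\h^{W_b}$ of the finite union of proper linear subspaces $\h^{W'}$ for $W' \supsetneq W_b$ a parabolic (these contain the points with strictly larger stabilizer), it is Zariski open and nonempty in $\h^{W_b}$, and so has the same dimension $n - r$.

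Next I would pass from $\h^{W_b}_{\textrm{reg}}$ to its image in $\h/W$. The composition
\bdm
\h^{W_b}_{\textrm{reg}} \hookrightarrow \h^{(W_b)}_{\textrm{reg}} \twoheadrightarrow \h^{(W_b)}_{\textrm{reg}} / W
\edm
is surjective by the very definition of $\h^{(W_b)}_{\textrm{reg}}/W$ as the $W$-saturation of $\h^{W_b}_{\textrm{reg}}$ in $\h/W$. The fibre over a point $W \cdot p$ is contained in the (finite) $W$-orbit of $p$, so this map has finite fibres. Hence it is quasi-finite (in fact finite, being the restriction of the integral extension $\h \to \h/W$), and dimensions are preserved:
\bdm
\dim \h^{(W_b)}_{\textrm{reg}} / W \;=\; \dim \h^{W_b}_{\textrm{reg}} \;=\; n - r.
\edm

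I do not expect any genuine obstacle here: the statement is essentially a dimension count, and the only substantive input is that the restriction of $\h \to \h/W$ to a single stratum remains finite, which follows from $W$ being finite. The only point worth a moment's care is checking that $\h^{W_b}_{\textrm{reg}}$ is nonempty, but this is immediate because $W_b$ is by construction the stabilizer of some point of $\h^{W_b}$.
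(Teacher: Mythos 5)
Your proposal is correct and follows essentially the same route as the paper: both use the decomposition $\mathfrak{h} = \mathfrak{h}^{W_b} \oplus (\mathfrak{h}^{*W_b})^{\perp}$ to get $\dim \mathfrak{h}^{W_b} = n - r$, observe that the regular stratum is a nonempty open subset and hence has the same dimension, and invoke finiteness of the quotient $\mathfrak{h} \twoheadrightarrow \mathfrak{h}/W$ to conclude. The only cosmetic difference is that the paper passes to the open subset after quotienting by $W$ (using irreducibility of $\mathfrak{h}^{(W_b)}/W$) while you do it before; this changes nothing of substance.
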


\begin{proof}
Since $\mathfrak{h}^{(W_b)}_{\textrm{reg}}/W$ is an open subset of the irreducible variety $\mathfrak{h}^{(W_b)}/W$, $\dim \mathfrak{h}^{(W)}_{\textrm{reg}}/W = \dim \mathfrak{h}^{(W_b)}/W$. As explained in subsection (\ref{sec:para}), there is a $W'$-equivariant decomposition $\mathfrak{h} = \mathfrak{h}^{W_b} \oplus (\mathfrak{h}^{*W_b})^{\perp}$ with $\dim \, (\mathfrak{h}^{*W_b})^{\perp} = r$. Hence $\dim \mathfrak{h}^{(W_b)} = n - r$. Since the quotient map $\mathfrak{h} \twoheadrightarrow \mathfrak{h}/W$ is a finite surjective morphism, $\dim \mathfrak{h}^{(W_b)}/W = \dim \mathfrak{h}^{W_b} = n - r$.  
\end{proof}

\subsection{}\label{prop:mainidea} Recall from (\ref{sub:calogerospace}) that we have surjective morphisms $\pi_1 \, : \, X_{\mathbf{c}}(W,\h) \twoheadrightarrow \h^*/W$ and $\pi_2 \, : \, X_{\mathbf{c}}(W,\h) \twoheadrightarrow \h/W$ defined by the inclusions $\C[\mathfrak{h}]^W \hookrightarrow Z_{\mathbf{c}}(W,\h)$ and $\C[\mathfrak{h}^*]^W \hookrightarrow Z_{\mathbf{c}}(W,\h)$ respectively. The map $\Upsilon$ was defined to be $\pi_1 \times \pi_2 \, : \, X_{\mathbf{c}}(W,\h) \twoheadrightarrow \h^*/ W \times \h/W$.

\begin{prop}
Let $\mc{L}$ be a symplectic leaf in $X_{\mathbf{c}}(W,\mathfrak{h})$ of dimension $2l$. 
\begin{enumerate}
\item There exists a unique conjugacy class $(W_p)$ of parabolic subgroups of $W$ with rank $(W_p) = n - l$ such that 
\bdm
\mathcal{L} \cap \pi_1^{-1}(\mathfrak{h}_{\textrm{reg}}^{(W_p)} / W) \neq \emptyset.
\edm
\item There exists a unique conjugacy class $(W_q)$ of parabolic subgroups of $W$ with rank $(W_q) = n - l$ such that 
\bdm
\mathcal{L} \cap \pi_2^{-1}(\mathfrak{h}_{\textrm{reg}}^{*(W_q)} / W) \neq \emptyset.
\edm 
\end{enumerate}
In general $(W_p) \neq (W_q)$. 
\end{prop}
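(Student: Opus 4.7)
The plan is to prove (2) by induction on $n = \dim \h$; statement (1) then follows by the symmetric argument, invoking Remark \ref{remark:either} to complete at a point $\lambda \in \h^*/W$ in place of $b \in \h$. The base case $n = 0$ is trivial.

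For the inductive step, first use irreducibility of $\overline{\mc{L}}$ together with the finite parabolic stratification of $\h/W$ (Lemma \ref{lem:parabolicdim}) to identify the unique class $(W_q)$ whose stratum $\h_{\textrm{reg}}^{(W_q)}/W$ meets $\pi_2(\overline{\mc{L}})$ in a dense subset; set $r := \rank (W_q)$. Then complete at a point $q \in \h$ with stabiliser in $(W_q)$ and $W \cdot q \in \pi_2(\mc{L})$: combining Theorem \ref{thm:completeiso} with the decomposition (\ref{eq:decompose}) yields a Poisson isomorphism
\bdm
\widehat{Z}_{\mbf{c}}(W,\h)_q \;\simeq\; \widehat{Z}_{\mbf{c}'}(W_q,\mf{t})_0 \,\widehat{\otimes}\, \widehat{\C}[\mf{s} \times \mf{s}^*]_0,
\edm
with $\dim \mf{t} = r$ and $\dim \mf{s} = n-r$. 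The second factor is Poisson-nondegenerate of dimension $2(n-r)$, so the germ of $\mc{L}$ at a lift $\chi$ of $W\cdot q$ splits as $\mc{L}'_q \times (\mf{s} \times \mf{s}^*)$, where $\mc{L}'_q$ is a leaf of $X_{\mbf{c}'}(W_q,\mf{t})$ through a point $\psi$ over $0 \in \mf{t}/W_q$, of dimension $2(l+r-n) \geq 0$; in particular $r \geq n - l$.

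If $r = n-l$ then $\mc{L}'_q = \{\psi\}$ and $(W_q)$ itself is the sought parabolic class. If instead $r > n-l$, the inductive hypothesis applied to $\mc{L}'_q$ in $X_{\mbf{c}'}(W_q,\mf{t})$ (ambient dimension $r < n$) produces a unique parabolic class of $W_q$ of rank $r - (l+r-n) = n-l$ meeting $\mc{L}'_q$. Such a parabolic of $W_q$ is automatically a parabolic of $W$ of the same rank (since $W_q$ fixes $\mf{s}$ pointwise), and transferring back under $\theta$ yields a parabolic of $W$ of rank $n-l$ meeting $\mc{L}$ via $\pi_2^{-1}$; uniqueness in $W$ descends from the uniqueness of $(W_q)$ together with the inductive uniqueness in $W_q$.

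The main obstacle is the geometric step: verifying that the symplectic leaf germ of $\mc{L}$ through $\chi$ really splits as a product aligned with the Poisson factorisation (so that the leaf of $X_{\mbf{c}'}(W_q,\mf{t})$ through $\psi$ has the predicted dimension), and that the inductive conclusion in $X_{\mbf{c}'}(W_q,\mf{t})$ faithfully pulls back via $\theta$ to a statement about $\mc{L}$ and $\pi_2$ in the original algebra. The final assertion $(W_p) \neq (W_q)$ in general follows from small-rank examples where the Calogero-Moser leaf structure is asymmetric in $\h$ and $\h^*$.
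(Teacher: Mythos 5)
Your core step---completing at a point of the dense stratum of $\pi_2(\overline{\mc{L}})$, invoking Theorem \ref{thm:completeiso} together with the factorisation (\ref{eq:decompose}), and reading off $r \ge n-l$ from the resulting dimension count---is exactly the paper's argument. But the paper closes the loop differently, and the point where you diverge is where the gap sits. The paper first uses the fact that $\Upsilon = \pi_1 \times \pi_2$ is a finite morphism, so that $\dim \pi_1(\mc{L}) + \dim \pi_2(\mc{L}) \ge \dim \Upsilon(\mc{L}) = 2l$; hence (after possibly swapping the two factors) the relevant projection of $\mc{L}$ has dimension at least $l$, while Lemma \ref{lem:parabolicdim} bounds that dimension above by $n-r$. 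This gives the reverse inequality $r \le n-l$, so $r = n-l$ outright and no inductive fallback is ever needed. You omit this upper bound and try to substitute an induction on $n$ for it.

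That inductive step does not go through as written. The isomorphism of Theorem \ref{thm:completeiso} identifies \emph{formal completions}: by the correspondence used throughout Section \ref{sec:Poisson} (\cite[Corollary 2.19]{GS}), every maximal ideal of $\widehat{Z}_{\mbf{c}'}(W_q,\h)_0$ lies over $\mf{n}(0)$, i.e.\ the completion only ``sees'' the fibre of $X_{\mbf{c}'}(W_q,\mf{t})$ over $0 \in \mf{t}/W_q$ (and, on the other side, the single fibre $\pi_2^{-1}(W \cdot q)$). The conclusion of your inductive hypothesis, however, concerns closed points of $\mc{L}'_q$ lying over \emph{nonzero} points of $\mf{t}/W_q$ in the stratum of the smaller parabolic; such points do not correspond to maximal ideals of the completed algebra and so cannot be ``transferred back under $\theta$'' to actual points of $\mc{L}$ without a further argument identifying the germ of $\pi_2(\mc{L})$ at $W \cdot q$ with that of $\pi_2'(\mc{L}'_q) \times \mf{s}$ as subvarieties, not merely matching Poisson primitive ideals. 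Note moreover that if the transfer did work it would exhibit a stratum of rank $n-l < r$ meeting $\pi_2(\mc{L})$, contradicting your own choice of $(W_q)$ as the dense, hence minimal-rank, stratum---a sign that the case $r > n-l$ should be excluded directly, which is precisely what the finiteness of $\Upsilon$ accomplishes. The remaining ingredients (uniqueness via irreducibility of $\overline{\pi_2(\mc{L})}$ and density of the open stratum, and the reduction of (1) to (2) via Remark \ref{remark:either}) are fine and consistent with the paper.
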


\begin{proof}
Let $P$ be the Poisson primitive ideal of $Z_{\mathbf{c}}(W,\mathfrak{h})$ defining the closure of $\mathcal{L}$ in $X_{\mathbf{c}}$. The map $\Upsilon$ is a closed, finite, surjective morphism, therefore $\Upsilon(\mathcal{L})$ is a locally closed set of dimension $2l$. It is contained in the locally closed set $\pi_1(\mathcal{L}) \times \pi_2(\mathcal{L}) \subseteq \mathfrak{h}/W \times \mathfrak{h}^*/W$. Therefore 
\begin{displaymath}
\dim \pi_1(\mathcal{L}) + \dim \pi_2(\mathcal{L}) = \dim \left( \pi_1(\mathcal{L}) \times \pi_2(\mathcal{L}) \right) \ge 2l.
\end{displaymath}
This means that either $\dim \pi_1(\mathcal{L}) \ge l$ or $\dim \pi_2(\mathcal{L}) \ge l$. For now let us assume that $\dim \pi_1(\mathcal{L}) \ge l$. Choose a conjugacy class $(W_b)$ of parabolic subgroups of minimal rank such that $\mathfrak{h}^{(W_b)}_{\textrm{reg}}/W \, \cap \, \pi_1(\mathcal{L}) \neq \emptyset$. Minimality of the rank of $(W_b)$  is equivalent to asking that the dimension of $\mathfrak{h}^{(W_b)}_{\textrm{reg}}/W$ in $\h/W$ is maximal with respect to the property $\mathfrak{h}^{(W_b)}_{\textrm{reg}}/W \, \cap \, \pi_1(\mathcal{L}) \neq \emptyset$. Since the stratification of $\mathfrak{h}/W$ by the locally closed subsets $\mathfrak{h}^{(W_b)}_{\textrm{reg}}/W$ is finite, the set $\mathfrak{h}^{(W_b)}_{\textrm{reg}}/W \, \cap \, \pi_1(\mathcal{L})$ is open in $\pi_1(\mathcal{L})$. Denote by $P'$ a prime ideal of $\widehat{Z}_{\mathbf{c}}(W,\mathfrak{h})_b$ that is minimal over the ideal $P \otimes_{Z_{\mbf{c}}(W,\mf{h})} \widehat{Z}_{\mathbf{c}}(W,\mathfrak{h})_b$. It is a Poisson primitive ideal. Let $\theta : \widehat{Z}_{\mathbf{c}}(W,\mathfrak{h})_b \stackrel{\sim}{\rightarrow} \widehat{Z}_{\mathbf{c}'}(W_b,\mathfrak{h})_0$ be the isomorphism of Theorem \ref{thm:completeiso}. Lemma \ref{lem:Poisson} says that the ideal $Q' := \theta(P') \cap Z_{\mathbf{c}'}(W_b,\mathfrak{h})$ is a Poisson primitive ideal. The isomorphism (\ref{eq:decompose}) implies that
\beq\label{eq:VQ}
\textrm{V} \, (Q') \simeq \overline{\mc{M}} \times \mf{s} \times \mf{s}^*,
\eeq
where 
\bdm
\overline{\mc{M}} = \textrm{V} \, (Q' \cap Z_{\mathbf{c}'}(W_b,\mf{t})) \subset X_{\mbf{c}'}(W_b,\mf{t}),
\edm
is the closure of some symplectic leaf $\mc{M}$. Fix rank $(W_b) = r$. Let us try to calculate the dimension of $\mc{M}$. Lemma \ref{lem:parabolicdim} says that $\dim \pi_1(\mathcal{L}) \le n - r$. Lemmata \ref{lem:primes} and \ref{lem:Poisson} show that $\height (Q') = \height (P)$. Therefore
\bdm
2l = \dim \mathcal{L} = 2n - \textrm{ht} (P) = 2n - \textrm{ht} (Q').
\edm
Since $\dim \, \mf{s} \times \mf{s}^* = 2(n - r)$, equation (\ref{eq:VQ}) shows that
\bdm
\dim \, \mc{M} + 2(n-r) = 2n - \height (Q') = 2l.
\edm
However $l \le \dim \, \pi_1(\mc{L}) \le n - r$ implies that $\dim \, \pi_1(\mc{L}) = l = n - r$ and $\dim \, \mc{M} = 0$. This also means that $\dim \, \pi_2(\mc{L}) = l$ and we could equally have choosen to work in $\h^*/W$. Clearly,
\bdm
\pi_1(\mathcal{L}) \cap \mathfrak{h}_{\textrm{reg}}^{(W_b)}/W \neq \emptyset \iff  \pi_1^{-1}( \mathfrak{h}_{\textrm{reg}}^{(W_b)}/W) \cap \mathcal{L} \neq \emptyset.
\edm
The uniqueness statement of the proposition follows from the fact that $\overline{\pi_1(\mc{L})}$ is irreducible and that $\h^{(W_b)}_{\textrm{reg}}/W \cap \pi_1(\mc{L})$ is open and dense in $\pi_1(\mc{L})$.
\end{proof}

\subsection{}\label{cor:zerodim} Let $W(\mathcal{L})$ denote the conjugacy class of parabolic subgroups of $W$ associated to $\mathcal{L}$ by Proposition \ref{prop:mainidea} (1). The partial ordering defined on the symplectic leaves of $X_{\mathbf{c}}$ by $\mathcal{L} \le \mathcal{M}$ if and only if $W(\mathcal{L}) \le W(\mathcal{M})$ in the ordering of (\ref{sec:para}) equals the partial ordering defined by the closure of leaves (c.f. \cite[(3.5)]{Poisson Orders}).

\begin{cor}
Let $\mc{L}$ be a zero dimensional symplectic leaf in $X_{\mathbf{c}}(W,\mathfrak{h})$. Then $\mc{L} \subseteq \Upsilon^{-1}(0)$.
\end{cor}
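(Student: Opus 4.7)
The plan is to apply Proposition \ref{prop:mainidea} directly with $l = 0$ in both parts and show that the ``top'' stratum in $\mf{h}/W$ (respectively $\mf{h}^*/W$) is nothing but the origin. The corollary should then fall out immediately.

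The key preparatory observation is that the only parabolic subgroup of $W$ having rank equal to $n = \dim \mf{h}$ is $W$ itself. To see this, recall from (\ref{sec:para}) that any parabolic is of the form $W_b = \mathrm{Stab}_W(b)$ for some $b \in \mf{h}$, and that its rank equals $\dim (\mf{h}^{*W_b})^\perp = n - \dim \mf{h}^{W_b}$. Hence rank equal to $n$ forces $\mf{h}^{W_b} = 0$; but $b \in \mf{h}^{W_b}$, so $b = 0$ and $W_b = \mathrm{Stab}_W(0) = W$. Consequently the stratum $\mf{h}^{(W)}_{\mathrm{reg}}/W$ inside $\mf{h}/W$ reduces to $\{0\}$, since $\mf{h}^W = 0$. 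The analogous statement in $\mf{h}^*/W$ follows identically, using that $\mf{h}^*$ is also a reflection representation of $W$ (or via Remark \ref{remark:either}).

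Now I invoke Proposition \ref{prop:mainidea}(1) with $l = 0$: it yields a conjugacy class $(W_p)$ of rank $n$ such that $\mc{L} \cap \pi_1^{-1}(\mf{h}^{(W_p)}_{\mathrm{reg}}/W) \neq \emptyset$. By the previous paragraph $(W_p) = (W)$ and $\mf{h}^{(W)}_{\mathrm{reg}}/W = \{0\}$, so $\mc{L} \cap \pi_1^{-1}(0) \neq \emptyset$. Since $\dim \mc{L} = 0$, the leaf is a single closed point, forcing $\pi_1(\mc{L}) = \{0\}$. Running the same argument through Proposition \ref{prop:mainidea}(2) yields $\pi_2(\mc{L}) = \{0\}$. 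Hence $\Upsilon(\mc{L}) = (\pi_1(\mc{L}), \pi_2(\mc{L})) = (0,0)$, i.e.\ $\mc{L} \subseteq \Upsilon^{-1}(0)$.

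There is no real obstacle here — the entire content of the corollary lies in Proposition \ref{prop:mainidea}, and the only new input is the elementary fact that a parabolic of full rank must be the whole group. The result is essentially an unpacking of the $l = 0$ case.
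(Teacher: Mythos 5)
Your argument is correct and follows exactly the paper's route: the paper's one-line proof likewise applies Proposition \ref{prop:mainidea}(1) and (2) with $l=0$, the only content you have added being the (correct) unpacking of why a rank-$n$ parabolic must be $W$ itself and why the corresponding stratum is $\{0\}$. No issues.
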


\begin{proof}
Proposition \ref{prop:mainidea} $(1)$ implies that $\mc{L} \subset \pi_1^{-1}(0)$ and Proposition \ref{prop:mainidea} $(2)$ implies that $\mc{L} \subset \pi_2^{-1}(0)$, therefore $\mc{L} \subset \pi_1^{-1}(0) \cap \pi_2^{-1}(0) = \Upsilon^{-1}(0)$. 
\end{proof}

\begin{remark}
It has been pointed out to the author by M. Martino that there is a direct proof\footnote{The idea is due to M. Martino, any errors in the argument are the authors'.} of Corollary \ref{cor:zerodim}. The rational Cherednik algebra $H_{\mbf{c}}(W,\h)$ is $\Z$-graded with $\deg \, x = 1$, $\deg \, y = -1$ and $\deg \, w = 0$ for $x \in \h \subset \C[\h^*]$, $y \in \h^* \subset \C[\h]$ and $w \in W$. The centre inherits a $\Z$-grading. Geometrically this says that there is an action of $\C^*$ on $X_{\mbf{c}}(W,\h)$. The map $\Upsilon$ is $\C^*$-equivariant and it can be shown that $0$ is the unique fixed point in $\h / W \times \h^* / W$. Since $\C^*$ is connected and the set $\Upsilon^{-1}(0)$ is finite, this is the set of $\C^*$-fixed points of $X_{\mbf{c}}(W,\h)$. It is shown in \cite[Remark 3.1]{GGOR} that there exists an element $\msf{eu} \in Z_{\mbf{c}}(W,\h)$ (the ``Euler operator''), such that $\{ \msf{eu}, z \} = (\deg z) \cdot z$ for any homogeneous element $z \in Z_{\mbf{c}}(W,\h)$ i.e. the infinitesimal action of $\C^*$ is given by the Hamiltonian vector field $\{ \msf{eu}, - \}$. Again using the fact that $\C^*$ is connected, we see that the fixed points of $X_{\mbf{c}}(W,\h)$ correspond to those closed points whose maximal ideal is preserved by $\{ \msf{eu}, - \}$. If $\mc{L}$ is zero-dimensional then the maximal ideal defining it is clearly preserved by $\{ \msf{eu}, - \}$ and therefore $\mc{L} \subset \Upsilon^{-1}(0)$.
\end{remark}

\begin{prop}\label{prop:leavesmap}
Let $(W_b)$, $b \in \h$, be a conjugacy class of parabolic subgroups of $W$ and choose a representative $W_b$ of this class. Let $\mc{T}$ denote the set of all symplectic leaves $\mc{L}$ in $X_{\mathbf{c}}(W,\mathfrak{h})$ such that $W(\mc{L}) = (W_b)$. Then there exists a surjective map
\bdm
\Psi \, : \, \{ \textrm{zero dimensional leaves of } X_{\mathbf{c}'}(W_b,\mathfrak{t}) \} \twoheadrightarrow \mc{T},
\edm
though both sets may be empty (recall that $\mf{t} = (\h^{*W_b})^\perp$).
\end{prop}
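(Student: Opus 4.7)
The plan is to transport the Poisson-primitive ideal defining $\{\psi\}\times\mathfrak{s}\times\mathfrak{s}^*$ through the complete Poisson isomorphism $\theta$ of Theorem \ref{thm:completeiso}, essentially reading the proof of Proposition \ref{prop:mainidea} in reverse and using the decomposition (\ref{eq:decompose}) to pass between $X_{\mathbf{c}'}(W_b,\mathfrak{t})$ and $X_{\mathbf{c}'}(W_b,\mathfrak{h})$.

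To construct $\Psi$, fix $b\in\mathfrak{h}$ with $\mathrm{Stab}_W(b)=W_b$. Given a zero-dimensional leaf $\{\psi\}$ in $X_{\mathbf{c}'}(W_b,\mathfrak{t})$, let $\mathfrak{m}_\psi \triangleleft Z_{\mathbf{c}'}(W_b,\mathfrak{t})$ be its defining maximal Poisson ideal. By Corollary \ref{cor:zerodim}, $\mathfrak{m}_\psi$ contains the augmentation ideal of $\mathbb{C}[\mathfrak{t}/W_b]$, so via (\ref{eq:decompose}) the ideal
\[
J_\psi := \mathfrak{m}_\psi \otimes \mathbb{C}[\mathfrak{s}\times\mathfrak{s}^*] \triangleleft Z_{\mathbf{c}'}(W_b,\mathfrak{h})
\]
is the Poisson-primitive ideal defining the closure of the leaf $\{\psi\}\times\mathfrak{s}\times\mathfrak{s}^*$ (the second factor being symplectic, with scalar Poisson centre). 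Choose a prime $Q_\psi$ of $\widehat{Z}_{\mathbf{c}'}(W_b,\mathfrak{h})_0$ minimal over $J_\psi\otimes\widehat{Z}_{\mathbf{c}'}(W_b,\mathfrak{h})_0$; by Lemma \ref{lem:Poissonprimitive} it is Poisson primitive. Transport through $\theta^{-1}$ and contract: Lemma \ref{lem:Poissonprimitive} ensures that $P:=\theta^{-1}(Q_\psi)\cap Z_{\mathbf{c}}(W,\mathfrak{h})$ is Poisson primitive, and I set $\Psi(\{\psi\})$ to be the unique leaf $\mathcal{L}$ with $\overline{\mathcal{L}}=V(P)$.

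That $\mathcal{L}\in\mathcal{T}$ follows from the dimension calculation in the proof of Proposition \ref{prop:mainidea}: Lemmata \ref{lem:dim} and \ref{lem:primes} give $\mathrm{ht}(P)=\mathrm{ht}(Q_\psi)=\mathrm{ht}(J_\psi)=2r$, where $r=\mathrm{rank}(W_b)$, hence $\dim\mathcal{L}=2(n-r)$; and $\mathcal{L}$ meets the preimage of $W\cdot b \in \mathfrak{h}^{(W_b)}_{\mathrm{reg}}/W$ by construction of the completion, so the uniqueness clause of Proposition \ref{prop:mainidea} forces $W(\mathcal{L})=(W_b)$. For surjectivity, given $\mathcal{L}\in\mathcal{T}$ defined by Poisson-primitive $P$, the proof of Proposition \ref{prop:mainidea} furnishes a prime $P'$ of $\widehat{Z}_{\mathbf{c}}(W,\mathfrak{h})_b$ minimal over $P\otimes\widehat{Z}_{\mathbf{c}}(W,\mathfrak{h})_b$ and a zero-dimensional leaf $\{\psi\}$ in $X_{\mathbf{c}'}(W_b,\mathfrak{t})$ with $\theta(P')\cap Z_{\mathbf{c}'}(W_b,\mathfrak{h})=J_\psi$; since $\theta(P')$ is then minimal over $J_\psi\otimes\widehat{Z}_{\mathbf{c}'}(W_b,\mathfrak{h})_0$ by Lemma \ref{lem:primes}(1), taking $Q_\psi:=\theta(P')$ in our construction returns $\mathcal{L}$.

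The main obstacle is the bookkeeping of Poisson-primitivity under extension and contraction along the completion: the ideal $J_\psi\otimes\widehat{Z}_{\mathbf{c}'}(W_b,\mathfrak{h})_0$ may possess several minimal primes, each potentially yielding a different leaf, so a genuine single-valued $\Psi$ requires a coherent choice of $Q_\psi$ for each $\{\psi\}$. Surjectivity is nevertheless achieved by letting these choices depend on the target $\mathcal{L}$ — for each $\mathcal{L}\in\mathcal{T}$ one uses the minimal prime $\theta(P')$ produced by Proposition \ref{prop:mainidea} — an operation whose consistency is exactly what the lemmas of Section \ref{sec:Poisson}, together with the dimension equality above, were set up to guarantee.
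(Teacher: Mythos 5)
Your overall strategy --- transporting Poisson primitive ideals through $\theta$ and using the decomposition (\ref{eq:decompose}) --- is the paper's strategy, but there are two genuine gaps, one of which you flag yourself without resolving.

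First, the well-definedness of $\Psi$. You correctly observe that $J_\psi\otimes\widehat{Z}_{\mathbf{c}'}(W_b,\mathfrak{h})_0$ could a priori have several minimal primes, and you propose to ``let these choices depend on the target $\mathcal{L}$.'' That does not define a map: $\Psi$ must assign a single leaf to each zero-dimensional leaf $\{\psi\}$ before any target is mentioned, and a choice of minimal prime made ``per target'' is circular. The paper closes this hole by proving that the extended ideal is itself \emph{prime}, so there is a unique minimal prime and no choice at all. The mechanism is exactly the hypothesis you cite but do not exploit: by Corollary \ref{cor:zerodim} the ideal generated by $\mathfrak{n}(0)$ is contained in $\mathfrak{m}_\psi$, so Lemma \ref{lem:primes}(3) applies and shows $\mathfrak{m}_\psi\otimes_{Z_{\mathbf{c}'}(W_b,\mathfrak{t})}\widehat{Z}_{\mathbf{c}'}(W_b,\mathfrak{t})_0$ is prime; then repeated application of Lemma \ref{lem:polyprimes} shows that extending further to $\widehat{Z}_{\mathbf{c}'}(W_b,\mathfrak{h})_0 \simeq \widehat{Z}_{\mathbf{c}'}(W_b,\mathfrak{t})_0\,\widehat{\otimes}\,\widehat{\C}[\mathfrak{s}\times\mathfrak{s}^*]_0$ preserves primality. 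Your proof needs this step; with it, $Q_\psi$ is forced and $\Psi(\{\psi\}) := Z_{\mathbf{c}}(W,\mathfrak{h})\cap Q_\psi$ is unambiguous.

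Second, the choice of the base point $b$. You fix any $b$ with $\mathrm{Stab}_W(b)=W_b$, but surjectivity requires that every leaf $\mathcal{L}\in\mathcal{T}$ survive the completion at $b$, i.e.\ that $P\otimes_{Z_{\mathbf{c}}(W,\mathfrak{h})}\widehat{Z}_{\mathbf{c}}(W,\mathfrak{h})_b$ be a proper ideal, which forces $W\cdot b\in\pi_1(\mathcal{L})$. For a single $\mathcal{L}$ the proof of Proposition \ref{prop:mainidea} chooses $b$ adapted to that leaf; to get one map $\Psi$ hitting all of $\mathcal{T}$ you must choose $b$ in $\mathfrak{h}^{(W_b)}_{\mathrm{reg}}/W\cap\bigcap_{\mathcal{L}\in\mathcal{T}}\pi_1(\mathcal{L})$, which is possible because each $\pi_1(\mathcal{L})\cap\mathfrak{h}^{(W_b)}_{\mathrm{reg}}/W$ is open and dense in the irreducible (hence connected) set $\mathfrak{h}^{(W_b)}_{\mathrm{reg}}/W$ and $\mathcal{T}$ is finite. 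This intersection argument is absent from your write-up and is needed for the surjectivity claim as you state it.
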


\begin{proof}
Symplectic leaves of $X_{\mbf{c}}(W,\h)$ correspond to Poisson primitive ideals of $Z_{\mbf{c}}(W,\h)$. Therefore we will define $\Psi$ in terms of Poisson primitive ideals. Since the closure $\h^{(W_b)}/W$ of $\h^{(W_b)}_{reg}/W$ in $\h/W$ is irreducible, $\h^{(W_b)}_{reg}/W$ is connected. Let $\mc{L} \in \mc{T}$. It was shown in the proof of Proposition \ref{prop:mainidea} that 
\bdm
\dim \, \h^{(W_b)}_{\textrm{reg}}/W \cap \pi_1(\mc{L}) = n - \textrm{rank} \, (W_b) = \dim \, \h^{(W_b)}_{\textrm{reg}}/W.
\edm
Therefore $\h^{(W_b)}_{\textrm{reg}}/W \cap \pi_1(\mc{L}) $ is open and dense in $\h^{(W_b)}_{\textrm{reg}}/W$. Since the number of leaves in $\mc{T}$ is finite we can choose 
\bdm
b' \in \h^{(W_b)}_{\textrm{reg}}/W \cap \bigcap_{\mc{L} \in \mc{T}} \pi_1(\mc{L}).
\edm
Without loss of generality we may assume $b' = b$. First we wish to show that there is a natural bijection between the set $\{ \textrm{zero dimensional leaves of } X_{\mathbf{c}'}(W_b,\mathfrak{t}) \} = \{ \textrm{maximal and Poisson ideals of } Z_{\mbf{c}'}(W_b,\mf{t}) \}$ and the set of Poisson primitive ideals of height $2 \, \textrm{rank} \, (W_b)$ in $\widehat{Z}_{\mbf{c}'}(W_b,\mf{h})_0$. Let $\mf{m}$ to be a maximal and Poisson ideal of $Z_{\mbf{c}'}(W_b,\mf{t})$. The isomorphism (\ref{eq:decompose}) implies that the ideal generated by $\mf{m}$ in $Z_{\mbf{c}'}(W_b,\h)$ is a Poisson primitive ideal of height $2 \, \textrm{rank} \, (W_b)$. Now set $Q = \mf{m} \otimes_{Z_{\mbf{c}}(W_b,\mf{t})} \widehat{Z}_{\mbf{c}'}(W_b,\mf{h})_0$. It follows from Lemma \ref{lem:Poisson} that $Q$ is a Poisson ideal and every prime minimal over $Q$ is Poisson primitive. Moreover, Lemma \ref{lem:primes} $(1)$ says that the height of each of these minimal primes is $2 \, \textrm{rank} \, (W_b)$. Therefore it suffices to show that $Q$ is itself prime. Noting that 
\bdm
Q = \left( \mf{m} \otimes_{Z_{\mbf{c}}(W_b,\mf{t})} \widehat{Z}_{\mbf{c}'}(W_b,\mf{t})_0 \right) \otimes_{\widehat{Z}_{\mbf{c}'}(W_b,\mf{t})_0} \widehat{Z}_{\mbf{c}'}(W_b,\mf{h})_0,
\edm
repeated applications of Lemma \ref{lem:polyprimes} reduces the question to showing that $\mf{m} \otimes_{Z_{\mbf{c}}(W_b,\mf{t})} \widehat{Z}_{\mbf{c}'}(W_b,\mf{t})_0$ is prime. But this follows from Lemma \ref{lem:primes} $(3)$, since Corollary \ref{cor:zerodim} shows that the ideal generated in $Z_{\mbf{c}}(W_b,\mf{t})$ by the space $\mf{n}(0)$ is contained in $\mf{m}$. The definition of $\Psi$ is now straight-forward: by Theorem \ref{thm:completeiso} we may consider $Z_{\mbf{c}}(W,\h)$ to be a subalgebra of $\widehat{Z}_{\mbf{c}'}(W_b,\mf{h})_0$ then 
\bdm
\Psi(\mf{m}) := Z_{\mbf{c}}(W,\mf{h}) \cap Q.
\edm
Lemmata \ref{lem:primes} and \ref{lem:Poisson} show that $\Psi(\mf{m})$ is a Poisson primitive ideal of height $2r$. The surjectivity of $\Psi$ follows from the fact that each prime minimal over $P \otimes_{Z_{\mbf{c}}(W,\mf{h})} \widehat{Z}_{\mathbf{c}}(W,\mathfrak{h})_b$, $P \in \mc{T}$, corresponds to some zero dimensional leaf in $X_{\mbf{c}'}(W_b,\mf{t})$. 
\end{proof}

\begin{remark}
It is natural to ask
\begin{center}
\textit{Q. Is the map $\Psi$ a bijection?}
\end{center}
It can be seen from the proof of Proposition \ref{prop:leavesmap} that $| \Psi^{-1}(\mc{L})|$ equals the number of minimal primes over $P \otimes_{Z_{\mbf{c}}(W,\mf{h})} \widehat{Z}_{\mathbf{c}}(W,\mathfrak{h})_b$ (where $P$ is the Poisson primitive ideal defining the closure of $\mc{L}$). Therefore the above question is equivalent to showing that $P \otimes_{Z_{\mbf{c}}(W,\mf{h})} \widehat{Z}_{\mathbf{c}}(W,\mathfrak{h})_b$ is prime in $\widehat{Z}_{\mathbf{c}}(W,\mathfrak{h})_b$.
\end{remark}

\subsection{}\label{cor:underformedleaves} If $\mbf{c} = 0$ then we recover a result by Brown and Gordon \cite[Proposition 7.7]{Poisson Orders}, removing the requirement that $W$ be a Weyl group.

\begin{cor}
Let $W$ be a complex reflection group, $\h$ its reflection representation. Then the number of symplectic leaves of dimension $2l$ in $\h \times \h^* \, / \, W$ equals the number of conjugacy classes of parabolic subgroups of $W$ of rank $\dim \, \h - l$.
\end{cor}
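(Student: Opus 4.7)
The plan is to combine Propositions \ref{prop:mainidea} and \ref{prop:leavesmap} in the undeformed case $\mbf{c} = 0$. By Proposition \ref{prop:mainidea}, each symplectic leaf $\mc{L}$ of dimension $2l$ in $\h \times \h^*/W$ is assigned a unique conjugacy class $W(\mc{L})$ of parabolic subgroups of rank $n-l$, giving a partition of the set of leaves of dimension $2l$ as $\bigsqcup_{(W_b)}\mc{T}_{(W_b)}$, where $(W_b)$ ranges over conjugacy classes of rank $n-l$ and $\mc{T}_{(W_b)}$ denotes the set of leaves associated to $(W_b)$. Hence it suffices to show that $|\mc{T}_{(W_b)}| = 1$ for every such class. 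By Proposition \ref{prop:leavesmap} there is a surjection
\bdm
\Psi \, : \, \{\textrm{zero-dimensional leaves of } \mf{t} \times \mf{t}^*/W_b\} \twoheadrightarrow \mc{T}_{(W_b)},
\edm
so the problem reduces further to proving that $\mf{t} \times \mf{t}^*/W_b$ has exactly one zero-dimensional symplectic leaf.

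For the upper bound, I would apply Corollary \ref{cor:zerodim} to $X_0(W_b, \mf{t}) = \mf{t} \times \mf{t}^*/W_b$: every zero-dimensional leaf lies in $\Upsilon^{-1}(0)$. This fiber consists of the $W_b$-orbits of pairs $(x,y) \in \mf{t} \times \mf{t}^*$ with $W_b \cdot x = \{0\}$ and $W_b \cdot y = \{0\}$; since $\mf{t}$ is the minimal faithful reflection representation of $W_b$, $\mf{t}^{W_b} = \mf{t}^{*W_b} = 0$ and therefore $x = y = 0$, so $\Upsilon^{-1}(0) = \{0\}$. For the lower bound, I claim that the origin is in fact a zero-dimensional leaf. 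Let $\mf{m}_0$ denote the augmentation ideal of $R := \C[\mf{t} \times \mf{t}^*]^{W_b}$. The vanishing of $\mf{t}^{W_b}$ and $\mf{t}^{*W_b}$ implies that $R$ contains no homogeneous invariants of polynomial degree $1$, so every homogeneous element of $\mf{m}_0$ has degree $\ge 2$. Since $\deg \{f,g\} = \deg f + \deg g - 2$ for homogeneous $f,g$, the bracket of any $f \in \mf{m}_0$ with any $g \in R$ of positive degree lies in degree $\ge 2$ and hence in $\mf{m}_0$, while brackets against constants vanish. Thus $\mf{m}_0$ is a Poisson maximal ideal, so it is Poisson primitive by Lemma \ref{lem:Poissonprimitive} and defines a zero-dimensional symplectic leaf of $\mf{t} \times \mf{t}^*/W_b$.

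These two facts give $|\mc{T}_{(W_b)}| = 1$ for each conjugacy class of parabolics of rank $n-l$, and summing over such classes yields the corollary. The main obstacle I anticipated was ruling out the \emph{a priori} possibility that the unique candidate point from Corollary \ref{cor:zerodim} fails to be a leaf, in which case $\mc{T}_{(W_b)}$ would be empty; the grading computation above resolves this, although one could equivalently argue via $\C^{*}$-equivariance and the Euler operator, as in the Remark following Corollary \ref{cor:zerodim}.
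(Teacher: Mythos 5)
Your proposal is correct and follows essentially the same route as the paper: Proposition \ref{prop:mainidea} partitions the $2l$-dimensional leaves by conjugacy classes of rank-$(n-l)$ parabolics, and the surjection of Proposition \ref{prop:leavesmap} reduces everything to the fact that $\{0\}$ is the unique zero-dimensional leaf of $\mf{t}\times\mf{t}^*/W_b$. The only difference is that the paper simply asserts this last fact, whereas you supply a proof (upper bound via Corollary \ref{cor:zerodim} and $\mf{t}^{W_b}=0$, lower bound via the degree $-2$ grading of the Poisson bracket), which is a correct and worthwhile filling-in of that step.
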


\begin{proof}
Let $W_b$, $b \in \h$ be a parabolic subgroup of $W$ of rank $r$, $\mf{t} \subset \h$ its reflection representation. Then $\{ 0 \}$ is the unique zero dimensional symplectic leaf in $\mf{t} \times \mf{t}^* \, / \, W_b$. Therefore Proposition \ref{prop:leavesmap} implies that there exists a unique symplectic leaf in $\h \times \h^* \, / \, W$ labelled by $(W_b)$ and this leaf has dimension $2 \dim \h - 2r$.
\end{proof}

\section{Cuspidal representations for $H_{0,\mbf{c}}(W)$}

\subsection{} A closed point $\chi \in X_{\mathbf{c}}(W,\mathfrak{h})$ can be regarded as a non-zero algebra homomorphism $\chi : Z_{\mathbf{c}}(W,\mathfrak{h}) \rightarrow \C$. We define 
\begin{displaymath}
H_{\mathbf{c},\chi} := \frac{H_{0,\mathbf{c}}(W,\mathfrak{h})}{\langle \ker\, \chi \rangle},
\end{displaymath}
a finite dimensional quotient of $H_{0,\mathbf{c}}(W,\mathfrak{h})$. 

\begin{defn}
The algebra $H_{\mathbf{c},\chi}$ is said to be a \textit{cuspidal algebra} if  $\{ \chi \}$ is a zero dimensional leaf of $X_{\mathbf{c}}$. A simple $H_{\mathbf{c}}(W,\mathfrak{h})$-module $L$ is a \textit{cuspidal representation} if $L$ is a module for some cuspidal algebra $H_{\mathbf{c},\chi}$, or equivalently, Supp $L$ is a zero dimensional symplectic leaf in $X_{\mathbf{c}}$.
\end{defn}

Note that the space $X_{\mathbf{c}}(W,\mathfrak{h})$ may have no zero dimensional leaves. For instance, if $W = S_n, n > 1$ and $\mbf{c} \neq 0$ then it is shown in \cite[Corollary 1.14]{1} that $X_{\mbf{c}}$ is a symplectic manifold of dimension $2n$ and has no zero dimensional leaves.

\subsection{Flows along symplectic leaves}
The algebra $H_{\mbf{c}}(\h,W)$ can be considered as a sheaf of algebras on $X_{\mbf{c}}(W,\h)$. The fibre of this sheaf at a point $\chi \in X_{\mbf{c}}(W,\h)$ is $H_{\mathbf{c},\chi}$. Let $\mathcal{L}$ be a leaf in $X_{\mathbf{c}}$ and $\chi_1, \chi_2 \in \mathcal{L}$. Then we have the beautiful result \cite[Theorem 4.2]{Poisson Orders}, based on \cite[Corollary 9.2]{DeConciniLyubashenko}:
\begin{equation}\label{eq:BGiso}
\psi_{\chi_,\chi_2} : H_{\mathbf{c},\chi_1} \stackrel{\sim}{\longrightarrow} H_{\mathbf{c},\chi_2}
\end{equation}
i.e. the representation theory of $H_{\mathbf{c}}(W,\mathfrak{h})$ is constant along the leaves of $X_{\mathbf{c}}(W,\mathfrak{h})$. We wish to show that this isomorphism is $W$-equivariant. 

\subsection{} We recall here the construction of the isomorphism (\ref{eq:BGiso}) as given in \cite[Theorem 4.2]{Poisson Orders}. Fix $H = H_{\mathbf{c}}(W,\mathfrak{h})$, $Z = Z_{\mbf{c}}(W,\h)$ and let $P$ be the Poisson prime defining the closure of $\mc{L}$. Then $H / P \cdot H$ is a $Z / P$-module and the algebras $H_{\mathbf{c},\chi_1}$ and $H_{\mathbf{c},\chi_2}$ are quotients of $H / P \cdot H$. The construction of (\ref{sec:Poissondefinition}) defines an action of $f \in Z$ on $H$ as a derivation, $D_f(a) := \{ f , a \}$ for $a \in H$. This makes $H$ into a Poisson module for $Z$. By \cite[Lemma 4.1]{Poisson Orders}, $H / P \cdot H$ is a $Z / P$-Poisson module with action induced from the derivations $D_f, f \in Z$. It is shown in the proof of \cite[Theorem 4.2]{Poisson Orders} that $H / P \cdot H$ is a locally free sheaf when restricted to $\mc{L}$. The space $\mc{L}$ is a smooth quasi-projective variety and we will now consider it as a complex analytic variety. Let $\hat{Z}$ be the algebra of holomorphic functions on $\mc{L}$ and define $\hat{H} = H \otimes_{(Z/P)} \hat{Z}$. The derivations $D_f$ extend to derivations on $\hat{H}$ because the Poisson structure extends uniquely to $\hat{Z}$. For each point $\chi \in \mc{L}$, the natural map $H_{\mbf{c},\chi} \rightarrow \hat{H}_{\chi}$ is an algebra isomorphism. Any two points $\chi_1$ and $\chi_2$ on $\mc{L}$ can be connected by a finite number of Hamiltonian flows, it is these flows that induce the isomorphism (\ref{eq:BGiso}).\\

Therefore we may assume that there exists $f \in \hat{Z}$ and a Hamiltonian flow $\rho : B \rightarrow \mc{L}$ for $f$ (where $B \subset \C$ is a small disk around zero) such that $\rho(0) = \chi_1$ and $\rho(t) = \chi_2$. Shrinking $B$ if necessary and choosing an open neighbourhood $U$ of $\rho(B)$ in $\mc{L}$, we may assume by Darboux's Theorem that we are in the following explicit situation: $U \subset \C^{2m}$ is an open, simply connected set containing $\chi_1, \chi_2$; $\mc{O}_U$ is the sheaf of holomorphic functions on $U$ and $x_1, \ds, x_{m}, y_1, \ds, y_m$ are symplectic coordinates on $U$. That is, there is a non-degenerate Poisson bracket on $\mc{O}_U$ defined by $\{ x_i , y_{j} \} = \delta_{ij}$ and $\{ x_i ,x_j \} = \{y_i,y_j \} =  0$ for all $1 \le i,j \le m$. Then $H' := \hat{H} \otimes_{\hat{Z}} Z'$ is an algebra containing $Z' = \mc{O}_U(U)$ such that $H' = \bigoplus_{i= 1}^n Z' \cdot a_i$ is free as a $Z'$-module. The action of $D_f$ on $H'$ is defined by
\bdm
D_f(x_i) = c_i(x,y), \quad D_f(y_j) = d_j(x,y) \quad \textrm{ and } \quad D_f(a_i) = \sum_{j = 1}^n e_{ij}(x,y) a_j,
\edm
for some functions $c_i,d_i,e_{ij} \in \mc{O}_U$. The algebra $H'$ is the space of global sections of the trivial vector bundle $U \times \C^n$ over $U$. We fix coordinates $z_1, \ds, z_n$ on $\C^n$ such that $z_i(a_j) = \delta_{ij}$. Then the derivative $D_f$ can be expressed explicitly as
\bdm
D_f = \sum_{i = 1}^{m} \left( c_i(x,y) \frac{\p}{\p x_i} + d_i(x,y) \frac{\p}{\p y_i} \right) - \sum_{i,j = 1}^n e_{ji}(x,y) z_j \frac{\p}{\p z_i},
\edm
the minus sign appears because the $z_i$ are dual to the $a_i$. The flow $\rho = (\rho_1, \ds, \rho_{m}, \rho_1', \ds, \rho_m')$ on $U$ with respect to $D_f$ satisfies $D_f(h)(\rho(t)) = \frac{d \rho}{d t}(h)(t)$ for all $h \in \mc{O}_U$ and is given explicitly as the solution to the system of equations
\beq\label{eq:floweq}
\frac{d \rho_i}{d t} = c_i(\rho(t)), \quad \frac{d \rho_i'}{d t} = d_i(\rho(t)), \quad 1 \le i \le m.
\eeq
It is clear from the presentation that $D_f$ actually defines a derivation of $\mc{O}_U[z_1, \ds, z_n]$. Every flow $\Psi : B \rightarrow U \times \C^n$ for $D_f$ is a lift of a flow $\rho : B \rightarrow U$. This means that there exists some function $\psi : B \rightarrow \C^n$ such that $\Psi = (\rho,\psi)$. Explicitly, $\psi$ satisfies the system of equations
\beq\label{eq:liftedeq}
\frac{d \psi_i}{d t} = - \sum_{j = 1}^n e_{ji}(\rho(t)) \psi_j(t) \quad 1 \le i \le n.
\eeq
Since this is a linear system of equations, the induced map on fibres $\psi_{\chi_,\chi_2} : \hat{H}_{\chi_1} \rightarrow \hat{H}_{\chi_2}$ is linear. It is proven in \cite[Theorem 4.2]{Poisson Orders} that $\psi_{\chi_1,\chi_2}$ is actually an algebra isomorphism. 

\subsection{}\label{lem:equivariantflow}
Any section $w \in H'$ can be considered as a function $w \circ \rho : B \rightarrow U \times \C^n$ extending the flow $\rho$. Locally, there is a unique flow $\Psi \, : \, B \rightarrow U \times \C^n$ for $D_f$, lifting $\rho$ and satisfying $\Psi(0) = w \circ \rho(0)$. 

\begin{lem}
If $w \in H'$ such that $D_f(w) = 0$ then $\Psi = w \circ \rho$. 
\end{lem}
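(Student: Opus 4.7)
The plan is to reduce the claim to the uniqueness of solutions of the linear system (\ref{eq:liftedeq}). Since $w$ is a section of the trivial bundle $U \times \C^n \to U$, the composition $w \circ \rho \, : \, B \to U \times \C^n$ automatically lifts $\rho$, and it agrees with $\Psi$ at $t = 0$ by construction of the initial condition. Hence it suffices to verify that the fibre component of $w \circ \rho$ solves (\ref{eq:liftedeq}), since $\Psi$ is the unique such lift.

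First, I would expand $w = \sum_{i=1}^n w_i a_i$ with $w_i \in Z' = \mc{O}_U(U)$. Applying $D_f$ using the Leibniz rule and the given formula $D_f(a_i) = \sum_j e_{ij}(x,y) a_j$, and collecting the coefficient of each $a_i$, the hypothesis $D_f(w) = 0$ is equivalent to the system
\bdm
D_f(w_i) = - \sum_{j=1}^n e_{ji}(x,y) \, w_j, \qquad i = 1, \ldots, n,
\edm
of identities in $Z'$. Next, set $\psi_i(t) := w_i(\rho(t))$; these are exactly the fibre coordinates of $w \circ \rho$. The chain rule together with the base flow equations (\ref{eq:floweq}) gives
\bdm
\frac{d \psi_i}{d t}(t) = \sum_k \left( c_k(\rho(t)) \frac{\p w_i}{\p x_k}(\rho(t)) + d_k(\rho(t)) \frac{\p w_i}{\p y_k}(\rho(t)) \right) = D_f(w_i)(\rho(t)),
\edm
and substituting the displayed identity above produces precisely (\ref{eq:liftedeq}). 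Uniqueness of solutions to a linear system of ODEs with prescribed initial data then forces $\Psi = w \circ \rho$.

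No step is a genuine obstacle: the content is simply the observation that a $D_f$-flat section of the bundle restricts along every integral curve of $D_f$ to the corresponding horizontal lift. The only delicate point is keeping the sign conventions straight when translating between $D_f$ acting on the basis $\{ a_i \}$ and the vector field $-\sum_{i,j} e_{ji} z_j \frac{\p}{\p z_i}$ on the dual fibre coordinates $\{ z_i \}$, and it is exactly this sign that matches the two formulas for $\frac{d \psi_i}{d t}$ appearing in the argument.
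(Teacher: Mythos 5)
Your proof is correct and follows essentially the same route as the paper's: expand $w$ in the basis $\{a_i\}$, translate $D_f(w)=0$ into the identity $D_f(w_i) = -\sum_j e_{ji} w_j$ on coefficients, and use the chain rule together with the base flow equations to check that $w\circ\rho$ satisfies the lifted linear system, concluding by uniqueness of flows. The sign bookkeeping you flag is handled identically in the paper.
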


\begin{proof}
By the uniqueness of flows it suffices to show that $w \circ \rho$ is a flow. Let us write $w = \sum_{i = 1}^n g_i(x,y) a_i$. Then $D_f(w) = 0$ implies that 
\bdm
\sum_{i,j = 1}^n \left( c_j(x,y) \frac{\p g_i}{\p x_j} + d_j(x,y) \frac{\p g_i}{\p y_j} \right) a_i + \sum_{i,j = 1}^n g_i e_{ij}(x,y) a_j = 0,
\edm
hence
\beq\label{eq:identiyone}
\sum_{j = 1}^n \left( c_j(x,y) \frac{\p g_i}{\p x_j} + d_j(x,y) \frac{\p g_i}{\p y_j} \right) + \sum_{j = 1}^n g_j e_{ji}(x,y) = 0, \quad \forall \, 1 \le i \le n.
\eeq
Equation (\ref{eq:liftedeq}) shows that it suffices to prove that $\frac{d (g_i \circ \rho)}{d t} = - \sum_{j = 1}^n e_{ji}(\rho(t)) g_j \circ \rho(t)$. Using the chain rule, (\ref{eq:floweq}) and (\ref{eq:identiyone}),
\bdm
\frac{d (g_i \circ \rho)}{d t} = \sum_{j = 1}^n \frac{\p g_i}{\p x_j}(\rho(t)) \cdot \frac{d \rho_j}{d t} + \sum_{j = 1}^n \frac{\p g_i}{\p y_j}(\rho(t)) \cdot \frac{d \rho_j'}{d t}
\edm
\bdm
= \sum_{j = 1}^n \left( \frac{\p g_i}{\p x_j}(\rho(t)) \cdot c_j(\rho (t)) + \frac{\p g_i}{\p y_j}(\rho(t)) \cdot d_j(\rho (t)) \right) = - \sum_{j = 1}^n g_j(\rho(t)) e_{ji}(\rho(t)).
\edm
\end{proof}

\begin{cor}
Let $\chi_, \chi_2$ be points on the leaf $\mc{L}$. Then the algebra isomorphism $\psi_{\chi_,\chi_2} : H_{\mathbf{c},\chi_1} \stackrel{\sim}{\rightarrow} H_{\mathbf{c},\chi_2}$ is $W$-equivariant. 
\end{cor}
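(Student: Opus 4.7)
The plan is to reduce $W$-equivariance of $\psi_{\chi_1,\chi_2}$ to showing that $\psi_{\chi_1,\chi_2}(w) = w$ for every $w \in W \subset H_{\mbf{c},\chi_i}$. Since $\psi_{\chi_1,\chi_2}$ is an algebra isomorphism and the $W$-action on each fibre is by inner conjugation, the identity $\psi_{\chi_1,\chi_2}(w) = w$ automatically upgrades to $\psi_{\chi_1,\chi_2}(w a w^{-1}) = w \psi_{\chi_1,\chi_2}(a) w^{-1}$ for all $a$.

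To apply Lemma \ref{lem:equivariantflow} to the section $w \in W \subset H'$, I need to verify that $D_f(w) = 0$ for every $f \in \hat{Z}$ arising in the construction. It suffices to check this for $f \in Z$ (the derivations $D_f$ extend by the same formula). The key observation is that each $f \in Z$ admits a $W$-invariant lift to $H_{\mbf{t},\mbf{c}}(W,\h)$: given any lift $\hat{f}$, the average
\[
\hat{f}' := \frac{1}{|W|} \sum_{u \in W} u \hat{f} u^{-1}
\]
still reduces mod $\mbf{t}$ to $f$ (because $f \in Z$ is fixed by the inner $W$-action on $H_{0,\mbf{c}}(W,\h)$), and it satisfies $w \hat{f}' w^{-1} = \hat{f}'$ for all $w \in W$. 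Using this lift in the defining formula of $D_f$ gives
\[
D_f(w) = \rho\bigl([\hat{f}', w]/\mbf{t}\bigr) = 0.
\]

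Having established $D_f(w) = 0$, Lemma \ref{lem:equivariantflow} tells me that for any Hamiltonian flow $\rho : B \to \mc{L}$ corresponding to such an $f$, the unique lift $\Psi : B \to U \times \C^n$ with initial value $w \circ \rho(0)$ is just $w \circ \rho$ itself. In other words, when we transport the constant section $w$ along $\rho$, we end up at $w$ in the fibre over $\rho(t) = \chi_2$. Concatenating finitely many such Hamiltonian flows connecting $\chi_1$ to $\chi_2$, we conclude $\psi_{\chi_1,\chi_2}(w) = w$ for every $w \in W$.

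The only nontrivial step is the first one, producing the $W$-invariant lift; everything else is a formal consequence of the already-established Lemma \ref{lem:equivariantflow} and the fact that $\psi_{\chi_1,\chi_2}$ is an algebra map. This averaging trick works precisely because $|W|$ is invertible in $\C$, which is why the argument is clean in characteristic zero.
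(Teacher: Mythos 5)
Your proof is correct and follows essentially the same route as the paper's: reduce $W$-equivariance to the identity $\psi_{\chi_1,\chi_2}(\bar{w}) = \bar{w}$ via the algebra-map property, verify $D_f(w)=0$, and invoke Lemma \ref{lem:equivariantflow} together with the fact that $\psi_{\chi_1,\chi_2}$ is a finite concatenation of Hamiltonian flows. Your averaging argument producing a $W$-invariant lift $\hat{f}'$ is a welcome elaboration of the step the paper merely asserts (``from the construction of the derivations $D_f$ \dots we see that $D_f(w)=0$''), and it correctly identifies why that assertion holds.
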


\begin{proof}
As explained above, the isomorphism $\psi_{\chi_,\chi_2}$ is the composition of finitely many isomorphisms induced from local Hamiltonian flows on $\mc{L}$. Therefore we may assume that we are in the explicit local situation described above. Let $w \in W$ and $a \in \hat{H}_{\chi_1}$. We wish to show that $\psi_{\chi_1,\chi_2}(w \cdot a) = w \cdot \psi_{\chi_1,\chi_2}(a)$. Since $\psi_{\chi_1,\chi_2}$ is an algebra morphism this is equivalent to proving that $\psi_{\chi_1,\chi_2}(\bar{w}) = \bar{w}$ where $\bar{w}$ is the image of $w$ in $\hat{H}_{\chi_1}$ and $\hat{H}_{\chi_2}$ respectively. From the construction of the derivations $D_f$ as given in (\ref{sec:Poissondefinition}) we see that $D_f(w) = 0$ for all $f \in \hat{Z}$. In terms of the trivialization of $\hat{H}$ over $U$, $\bar{w} = w \circ \rho(0) \in \hat{H}_{\chi_1}$ and $\bar{w} = w \circ \rho(t) \in \hat{H}_{\chi_2}$ (where $t \in B$ such that $\rho(t) = \chi_2$). Thus the result is a consequence of Lemma \ref{lem:equivariantflow}.
\end{proof}

\subsection{}\label{thm:main} We can now state the main result of this section.

\begin{thm}
Let $\mathcal{L}$ be a leaf in $X_{\mathbf{c}}(W,\mathfrak{h})$ of dimension $2l$ and $\chi$ a point on $\mathcal{L}$. Then there exists a parabolic subgroup $W_b$, $b \in \h$, of $W$ of rank $\dim \mathfrak{h} - l$ and a cuspidal algebra $H_{\mathbf{c}',\psi}$ with $\psi \in X_{\mathbf{c}'}(W_b,\mf{t})$ (recall that $\mf{t} = (\mathfrak{h}^{W_b})^\perp$) such that
\begin{displaymath}
H_{\mathbf{c},\chi} \simeq \textrm{Mat}_{\, |W / W_b|}\,(H_{\mathbf{c}',\psi}).
\end{displaymath}
\end{thm}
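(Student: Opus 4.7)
The plan is to identify $H_{\mbf{c},\chi}$ with a quotient of the completed algebra $\widehat{H}_{\mbf{c}}(W,\h)_b$, transport this via the isomorphism $\theta$ to the centralizer algebra $C(W, W_b, \widehat{H}_{\mbf{c}'}(W_b, \h)_0)$, and then exploit the tensor product decomposition (\ref{eq:Hiso}) to extract a cuspidal algebra $H_{\mbf{c}', \psi}$ together with a commutative polynomial factor that collapses modulo a maximal ideal.

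First, I would apply Proposition \ref{prop:mainidea}(1) to obtain a conjugacy class $(W_b) = W(\mc{L})$ of rank $n - l$. Since $\pi_1(\mc{L}) \cap \h^{(W_b)}_{\textrm{reg}}/W$ is open and dense in $\pi_1(\mc{L})$, I fix a representative $W_b = \textrm{Stab}_W(b)$ with $W \cdot b \in \pi_1(\mc{L}) \cap \h^{(W_b)}_{\textrm{reg}}/W$. By the Corollary following Lemma \ref{lem:equivariantflow}, the $W$-equivariant isomorphism class of $H_{\mbf{c},\chi}$ depends only on $\mc{L}$, so without loss of generality I replace $\chi$ by a point of $\mc{L}$ lying over $W \cdot b$. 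Then the ideal generated by $\mf{m}(b)$ is contained in $\mf{m}_\chi$, so
\bdm
H_{\mbf{c},\chi} \simeq \widehat{H}_{\mbf{c}}(W,\h)_b \, / \, \widehat{\mf{m}_\chi} \cdot \widehat{H}_{\mbf{c}}(W,\h)_b.
\edm

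Next, I apply the isomorphism $\theta$ of (\ref{eq:BEiso}) together with Theorem \ref{thm:completeiso} to get
\bdm
\widehat{H}_{\mbf{c}}(W,\h)_b \simeq C(W, W_b, \widehat{H}_{\mbf{c}'}(W_b, \h)_0) \simeq \textrm{Mat}_{|W/W_b|}\,(\widehat{H}_{\mbf{c}'}(W_b, \h)_0),
\edm
with $\theta$ sending $\widehat{\mf{m}_\chi}$ to a maximal ideal of the centre $\widehat{Z}_{\mbf{c}'}(W_b, \h)_0$. Extending (\ref{eq:decompose}) to the complete setting, I decompose $\widehat{Z}_{\mbf{c}'}(W_b, \h)_0 \simeq \widehat{Z}_{\mbf{c}'}(W_b, \mf{t})_0 \, \widehat{\otimes} \, \widehat{\C}[\mf{s} \times \mf{s}^*]_0$ and write $\theta(\widehat{\mf{m}_\chi})$ as the ideal generated by $\widehat{\mf{m}_\psi} \otimes 1$ and $1 \otimes \widehat{\mf{n}_\phi}$, where $\widehat{\mf{m}_\psi}$ corresponds to a point $\psi \in X_{\mbf{c}'}(W_b, \mf{t})$ and $\widehat{\mf{n}_\phi}$ is a maximal ideal at a point $(0, \phi) \in \mf{s} \times \mf{s}^*$ in the completed polynomial algebra. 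That $\{\psi\}$ is a zero-dimensional leaf of $X_{\mbf{c}'}(W_b, \mf{t})$ follows from the identification $V(Q') \simeq \{\psi\} \times \mf{s} \times \mf{s}^*$ appearing in the proof of Proposition \ref{prop:mainidea}, together with the surjectivity of $\Psi$ in Proposition \ref{prop:leavesmap}.

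Finally, using the algebra-level analogue of (\ref{eq:Hiso}), $\widehat{H}_{\mbf{c}'}(W_b, \h)_0 \simeq \widehat{H}_{\mbf{c}'}(W_b, \mf{t})_0 \, \widehat{\otimes} \, \widehat{\C}[\mf{s} \times \mf{s}^*]_0$, the quotient factors as
\bdm
H_{\mbf{c},\chi} \simeq \textrm{Mat}_{|W/W_b|}\left( H_{\mbf{c}',\psi} \otimes_\C \left( \widehat{\C}[\mf{s} \times \mf{s}^*]_0 / \widehat{\mf{n}_\phi} \right) \right) \simeq \textrm{Mat}_{|W/W_b|}\,(H_{\mbf{c}',\psi}),
\edm
since the commutative-polynomial factor collapses to $\C$ modulo its maximal ideal and $H_{\mbf{c}',\psi}$ is cuspidal by construction. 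The main obstacle will be the careful bookkeeping verifying that $\theta(\widehat{\mf{m}_\chi})$ decomposes compatibly with both the matrix form of $C(W, W_b, -)$ and the tensor factorization, so that quotienting commutes with all the identifications and reassembles cleanly into the claimed matrix algebra over the cuspidal quotient.
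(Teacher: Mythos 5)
Your proposal is correct and follows essentially the same route as the paper: Proposition \ref{prop:mainidea} to pin down $(W_b)$, the flow isomorphism (\ref{eq:BGiso}) to move $\chi$ into $\pi_1^{-1}(b)$, the Bezrukavnikov--Etingof isomorphism $\theta$ to pass to the centralizer algebra, and the factorization (\ref{eq:Hiso})/(\ref{eq:decompose}) to strip off the $\mf{s}\times\mf{s}^*$ factor and land on a cuspidal quotient. The only cosmetic difference is that the paper defines the relevant ideals by intersecting $\Ker\chi$ with $Z_{\mbf{c}'}(W_b,\h)$ and $Z_{\mbf{c}'}(W_b,\mf{t})$ before completing, whereas you decompose the completed maximal ideal as a tensor-product ideal; the bookkeeping you flag as the main obstacle is exactly what these intersections handle.
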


\begin{proof}
By Proposition \ref{prop:mainidea} there exists a unique conjugacy class $(W_b)$ of parabolic subgroups of $W$ such that $\mc{L} \, \cap \, \pi_1^{-1}( \h_{\textrm{reg}}^{(W_b)}/W) \neq \emptyset$. Without loss of generality, $b \in \pi_1(\mc{L}) \cap \h_{\textrm{reg}}^{(W_b)}/W$. Using the isomorphism (\ref{eq:BGiso}) we may assume that $\chi \in \mc{L} \cap \pi_1^{-1}(b)$. Let $K = \Ker \chi$. Then $K \otimes_{Z_{\mathbf{c}}(W,\mathfrak{h})} \widehat{Z}_{\mathbf{c}}(W,\mathfrak{h})_b$ is a maximal ideal in $\widehat{Z}_{\mathbf{c}}(W,\mathfrak{h})_b \simeq \widehat{Z}_{\mathbf{c}'}(W_b,\mathfrak{h})_0$ and the arguments in the proof of Propsition \ref{prop:mainidea} show that $M = Z_{\mbf{c}'}(W_b,\mf{t}) \cap K$ is a maximal and Poisson ideal of $Z_{\mbf{c}'}(W_b,\mf{t})$. If $N = Z_{\mbf{c}'}(W_b,\mf{h}) \cap K$ then the isomorphism (\ref{eq:Hiso}) shows that
\bdm
H_{0,\mathbf{c}'}(W_b,\mathfrak{h}) / N \cdot H_{0,\mathbf{c}'}(W_b,\mathfrak{h}) \simeq H_{0,\mathbf{c}'}(W_b,\mathfrak{t}) / M \cdot H_{0,\mathbf{c}'}(W_b,\mathfrak{t})
\edm
is some cuspidal quotient $H_{\mathbf{c}',\psi}$ of $H_{0,\mathbf{c}'}(W_b,\mathfrak{t})$ (here $\Ker \, \psi = M$). Now the isomorphism of Theorem \ref{thm:BEiso} induces an isomorphism
\bdm
\theta \, : \, H_{\mathbf{c},\chi} = \widehat{H}_{0,\mathbf{c}} (W,\mathfrak{h})_b / K \cdot \widehat{H}_{0,\mathbf{c}} (W,\mathfrak{h})_b \rightarrow C(W,W_b,\widehat{H}_{0,\mathbf{c}'}(W_b,\mathfrak{h})_0 / N \cdot \widehat{H}_{0,\mathbf{c}'}(W_b,\mathfrak{h})_0) \simeq \textrm{Mat}_{\, |W / W_b|}\,(H_{\mathbf{c}',\psi}).
\edm
\end{proof}

\begin{remark} There is a canonical finite dimensional quotient of the rational Cherednik algebra, the \textit{restricted rational Cherednik algebra}. We refere the reader to \cite{Baby} for the definition. Let $H_{\mbf{c},\chi}$ be a cuspidal algebra. Corollary \ref{cor:zerodim} shows that there is a block $B$ of the restricted rational Cherednik algebra $\bar{H}_{\mathbf{c}}(W)$ such that 
\begin{displaymath}
H_{\mathbf{c},\chi} = \frac{B}{Z_{\mbf{c}}(W) \cap B}.
\end{displaymath}
In particular, every cuspidal module occurs as a simple module for the restricted rational Cherednik algebra.
\end{remark}

\begin{prop}
Choose a point $\chi \in \mc{L}$ and let $(W_b)$ be the conjugacy class of parabolic subgroups labelling $\mc{L}$ (as in Proposition \ref{prop:mainidea} (1)). Then there exists a cuspidal algebra $H_{\mathbf{c}',\psi}$ for $W_b$ and functor 
\bdm
\Phi_{\psi, \chi} \, : \, H_{\mathbf{c}',\psi} \mmod \stackrel{\sim}{\longrightarrow} H_{\mbf{c},\chi} \mmod
\edm
defining an equivalence of categories such that
\begin{displaymath}
\Phi_{\psi,\chi}(M) \simeq \textsf{Ind}_{\, W_b}^{\, W} \,  M \quad \forall \, M \in H_{\mathbf{c}',\psi} \mmod
\end{displaymath}
as $W$-modules.
\end{prop}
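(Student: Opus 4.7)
\emph{Plan of proof.} The plan is to deduce the proposition directly from Theorem \ref{thm:main} by rendering the matrix-algebra isomorphism there as an honest Morita equivalence and tracking the $W$-action through the identifications. From the proof of Theorem \ref{thm:main}, $\theta$ gives an isomorphism
\bdm
H_{\mbf{c},\chi} \stackrel{\sim}{\longrightarrow} C(W,W_b,H_{\mbf{c}',\psi}),
\edm
where $H_{\mbf{c}',\psi}$ is the cuspidal quotient of $H_{0,\mbf{c}'}(W_b,\mf{t})$ corresponding to the ideal $Z_{\mbf{c}'}(W_b,\mf{t}) \cap K$. This ideal is maximal and Poisson (by the argument already used in the proofs of Proposition \ref{prop:leavesmap} and Theorem \ref{thm:main}), so $\{\psi\}$ is a zero-dimensional symplectic leaf and $H_{\mbf{c}',\psi}$ is indeed cuspidal.

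By the construction of the centralizer algebra, $C(W,W_b,H_{\mbf{c}',\psi}) = \mathrm{End}_{H_{\mbf{c}',\psi}}(P)$, where $P := \mathrm{Fun}_{W_b}(W, H_{\mbf{c}',\psi})$ is a free right $H_{\mbf{c}',\psi}$-module of rank $|W/W_b|$; in particular $P$ is a finitely generated projective generator, and the $(C(W,W_b,H_{\mbf{c}',\psi}),H_{\mbf{c}',\psi})$-bimodule $P$ implements the standard matrix Morita equivalence. I define
\bdm
\Phi_{\psi,\chi}(N) \; := \; P \otimes_{H_{\mbf{c}',\psi}} N,
\edm
regarded as an $H_{\mbf{c},\chi}$-module via $\theta$. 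This is immediately an equivalence of categories, with quasi-inverse $L \mapsto \mathrm{Hom}_{C(W,W_b,H_{\mbf{c}',\psi})}(P, L)$, so the only remaining point is the $W$-equivariance statement.

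The evaluation map $f \otimes m \mapsto (w \mapsto f(w) \cdot m)$ identifies $P \otimes_{H_{\mbf{c}',\psi}} N$ canonically with $\mathrm{Fun}_{W_b}(W, N)$. The $W$-action on $\Phi_{\psi,\chi}(N)$ is transported via $\theta$ from the inclusion $\C W \hookrightarrow H_{\mbf{c},\chi}$, and by the first formula of Theorem \ref{thm:BEiso} one has $(\theta(u)f)(w) = f(wu)$; hence $W$ acts on $\mathrm{Fun}_{W_b}(W, N)$ by $(u \cdot F)(w) = F(wu)$. This is precisely the coinduced representation $\mathrm{Fun}_{W_b}(W,N)$, which for the finite-index subgroup $W_b \le W$ agrees with $\textsf{Ind}_{W_b}^W N$ via the standard isomorphism (choose coset representatives and map $w \otimes m$ to the $W_b$-equivariant function supported on $W_b w^{-1}$). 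No genuine obstacle arises here: the whole argument is a repackaging of Theorem \ref{thm:main}, and the only point requiring care is matching the conventions so that the $W$-action produced by $\theta$ yields the induction rather than some twist of it.
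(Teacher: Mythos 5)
Your construction of the functor itself is sound and is essentially the paper's: pulling modules back along the centralizer-algebra isomorphism $\theta$ is the same thing as the matrix Morita equivalence you describe, and the identification $P \otimes_{H_{\mathbf{c}',\psi}} N \simeq \mathrm{Fun}_{W_b}(W,N) \simeq \mathsf{Ind}_{\, W_b}^{\, W} N$, with the action $(u \cdot F)(w) = F(wu)$ read off from the first formula of Theorem \ref{thm:BEiso}, is exactly how the paper obtains the $W$-module structure. The gap is in your opening step, where you assert that ``from the proof of Theorem \ref{thm:main}, $\theta$ gives an isomorphism $H_{\mathbf{c},\chi} \stackrel{\sim}{\rightarrow} C(W,W_b,H_{\mathbf{c}',\psi})$.'' The completion isomorphism $\theta$ only sees the fibre of $X_{\mathbf{c}}(W,\mathfrak{h})$ over the chosen base point $b$, so it yields such an isomorphism only for points $\chi' \in \mathcal{L} \cap \pi_1^{-1}(b)$. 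The proposition concerns an arbitrary $\chi \in \mathcal{L}$, and $\pi_1(\mathcal{L})$ is $l$-dimensional, so in general $\chi$ does not lie over $b$. In Theorem \ref{thm:main} the reduction ``we may assume $\chi \in \mathcal{L} \cap \pi_1^{-1}(b)$'' is harmless because only an abstract algebra isomorphism is claimed, and one passes from $\chi$ to $\chi'$ via the Brown--Gordon isomorphism $\psi_{\chi',\chi}$ of (\ref{eq:BGiso}). Here that reduction is not free: $\psi_{\chi',\chi}$ is a priori only an algebra isomorphism, and if it failed to fix the image of $\mathbb{C}W$ the $W$-module structure of $\Phi_{\psi,\chi}(M)$ could come out twisted.

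This is precisely the point of the paper's Lemma and Corollary in (\ref{lem:equivariantflow}): the isomorphism $\psi_{\chi',\chi}$ is assembled from Hamiltonian flows, the group elements $w$ satisfy $D_f(w) = \{f,w\} = 0$ for every $f$ in the centre, hence are flat sections carried to themselves along the flow, and therefore $\psi_{\chi',\chi}$ is $W$-equivariant. You must either invoke this result (the paper's proof sets $\Phi_{\psi,\chi} = (\phi_{\chi',\chi})_* \circ \theta^*$ and cites exactly this equivariance) or supply an independent argument for it; as written, your proof establishes the $W$-module claim only for points of $\mathcal{L}$ lying in the fibre $\pi_1^{-1}(b)$.
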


\begin{proof}
If $M$ is any $\widehat{H}_{0,\mathbf{c}'} (W_b,\mathfrak{t})_0$-module and $\theta$ the isomorphism of Theorem \ref{thm:BEiso}, then $\theta^* (M) = \textrm{Fun}_{W_b}(W,M)$. As a $W$-module, $\textrm{Fun}_{W_b}(W,M) \simeq \textsf{Ind}_{\, W_b}^{\, W} \,  M$. Taking $\chi' \in \pi^{-1}(b) \cap \mc{L}$ and fixing an isomorphism $\phi_{\chi',\chi} : H_{\mbf{c},\chi'} \stackrel{\sim}{\rightarrow} H_{\mbf{c},\chi'}$ as in (\ref{eq:BGiso}) defines an equivalence $(\phi_{\chi',\chi})_* : H_{\mathbf{c},\chi'} \mmod \stackrel{\sim}{\rightarrow} H_{\mbf{c},\chi} \mmod$. Corollary \ref{lem:equivariantflow} says that $\phi_{\chi',\chi}$ is $W$-equivariant therefore $\Phi_{\psi, \chi} = (\phi_{\chi',\chi})_* \circ \theta^*$ has the desired property.
\end{proof}

\begin{example}
Let $I_2(m) = \langle a,b, \, | \, a^m = b^2 = 1, bab = a^{-1} \rangle$ be the dihedral group of order $2m$. When $m$ is odd there is only one conjugacy class of reflections, $\{ a^s b \, | \, 0 \le s \le m-1 \}$, and when $m$ is even there are two, $C_1 = \{ a^s b \, | \, 0 \le s \le m-1, s \textrm{ even} \}$ and $C_2 = \{ a^s b \, | \, 0 \le s \le m-1, s \textrm{ odd} \}$. The dihedral groups are rank two reflection groups therefore $\dim X_{\mbf{c}}(I_2(m)) = 4$ and, for $m \ge 5$, it is always a singular variety as shown in \cite{Baby}. The conjugacy classes of parabolic subgroups in $I_2(m)$ are $(1)$, $(\langle b \rangle)$ and $(I_2(m))$ when $m$ is odd and $(1)$, $(\langle b \rangle)$, $(\langle ab \rangle)$ and $(I_2(m))$ when $m$ is even. By making use of Corollary \ref{cor:zerodim} and knowing the blocks of the restricted rational Cherednik algebra, which the author has calculated in his PhD thesis, one can show that the symplectic leaves for $X_{\mbf{c}}(I_2(m))$ are described as follows. 

\begin{table}[h]\label{tab:tab1}
\centering
\caption{Label, dimension and number of leaves for $I_2(m)$, $m$ even}
\begin{tabular}{c|c|cccc}
      &    & \# of leaves  &                                &                          &                 \\
label & dim & $\mbf{c} = 0$& $\mbf{c} \in \{ 0 \} \times \C^{\times}$ & $\mbf{c} \in \C^{\times} \times \{ 0 \}$ & $\mbf{c}$ generic \\
\hline\hline
$(1)$  & 4   & 1           & 1                           & 1                            & 1                \\ 
$(\langle b \rangle)$ & 2  & 1          & 1              & 0                            & 0                \\
$(\langle ab \rangle)$& 2  & 1          & 0              & 1                            & 0                \\
$(I_2(m))$& 0 & 1          & 1                           & 1                            & 1              
\end{tabular}
\end{table}
\begin{table}
\caption{Label, dimension and number of leaves for $I_2(m)$, $m$ odd}
\begin{tabular}{c|c|cccc}
      &    & \# of leaves  &                                &                          &                 \\
label & dim & $\mbf{c} = 0$& $\mbf{c} \neq 0$ \\
\hline\hline
$(1)$  & 4   & 1           & 1    \\ 
$(\langle b \rangle)$ & 1  & 1 & 0    \\
$(I_2(m))$& 0 & 1          & 1                 
\end{tabular}
\end{table}
In all cases, if $\chi$ is a point on a two dimensional leaf then $H_{\mbf{c},\chi}$ is isomorphic to six by six matrices over the cuspidal algebra $\C[ x,y] \rtimes S_2 / (x^2, xy, y^2)$. When $m = 6$, $I_2(6)$ is the Weyl group $G_2$. In this case, the cuspidal algebra supported on the zero dimensional leaf is a quotient of the algebra described in \cite[Remark 16.5 (i)]{1}.
\end{example}  

\section*{Acknowledgements}

The research described here was done both at the University of Edinburgh with the financial support of the EPSRC and during a visit to the University  of Bonn with the support of a DAAD scholarship. This material will form part of the author's PhD thesis for the University of Edinburgh. The author would like to express his gratitude to his supervisor, Professor Iain Gordon, for his help, encouragement and patience. He also thanks Dr. Maurizio Martino, Dr. Maria Chlouveraki and Professor Ken Brown for many fruitful discussions.

\bibliographystyle{plain}
\bibliography{biblo}

\end{document}